\title{Analysis of a New Space-Time Parallel Multigrid Algorithm for
  Parabolic Problems}
\author{Martin J. Gander
\thanks{Section de Math\'{e}matiques
	2-4 rue du Li\`{e}vre, CP 64
	CH-1211 Gen\`{e}ve
(\email{martin.gander@unige.ch})}
\and
Martin Neum\"uller
\thanks{Inst. of Comp. Mathematics
        Altenbergerstr.~69
        4040 Linz
        Austria
(\email{martin.neumueller@jku.at})}
}
\newcommand{\IC}{\mathbb{C}}
\newcommand{\IN}{\mathbb{N}}
\newcommand{\IP}{\mathbb{P}}
\newcommand{\IR}{\mathbb{R}}
\newtheorem{remark}[theorem]{Remark}
\newtheorem{example}[theorem]{Example}
\newcommand{\abs}[1]{\left\lvert{#1}\right\rvert}
\newcommand{\norm}[1]{{\left\lVert{#1}\right\rVert}} 
\newcommand{\spv}[2]{{\left({#1},{#2}\right)}}
\renewcommand{\i}{\mathbbm{i}}
\renewcommand{\vec}[1]{\bm{#1}}
\DeclareMathOperator*{\argsup}{argsup}
\tikzset{external/system call={latex \tikzexternalcheckshellescape -halt-on-error
-interaction=batchmode -jobname "\image" "\texsource";
dvips -o "\image".ps "\image".dvi;
ps2eps "\image.ps"}}
\newcommand{\thickhline}{%
    \noalign {\ifnum 0=`}\fi \hrule height 1pt
    \futurelet \reserved@a \@xhline
}
\newcolumntype{"}{@{\hskip\tabcolsep\vrule width 1pt\hskip\tabcolsep}}
\begin{document}
\maketitle
\slugger{mms}{sisc}{xx}{x}{x--x}

\begin{abstract}
  We present and analyze a new space-time parallel multigrid method
  for parabolic equations. The method is based on arbitrarily high
  order discontinuous Galerkin discretizations in time, and a finite
  element discretization in space. The key ingredient of the new
  algorithm is a block Jacobi smoother. We present a detailed
  convergence analysis when the algorithm is applied to the heat
  equation, and determine asymptotically optimal smoothing parameters,
  a precise criterion for semi-coarsening in time or full coarsening,
  and give an asymptotic two grid contraction factor estimate. We then
  explain how to implement the new multigrid algorithm in parallel, and
  show with numerical experiments its excellent strong and weak
  scalability properties.
\end{abstract}

\begin{keywords}
Space-time parallel methods, multigrid in space-time, DG-discretizations,
strong and weak scalability, parabolic problems
\end{keywords}

\begin{AMS}
65N55, 65F10, 65L60 
\end{AMS}

\pagestyle{myheadings}
\thispagestyle{plain}
\markboth{}{}

\section{Introduction}
\label{s:intro}

About ten years ago, clock speeds of processors have stopped
increasing, and the only way to obtain more performance is by using
more processing cores. This has led to new generations of
supercomputers with millions of computing cores, and even today's
small devices are multicore. In order to exploit these new
architectures for high performance computing, algorithms must be
developed that can use these large numbers of cores efficiently. When
solving evolution partial differential equations, the time direction
offers itself as a further direction for parallelization, in addition
to the spatial directions, and the parareal algorithm
\cite{Lions:2001:PTD,Maday2002,Bal2005,Staff2005,Gander2007,gander:2008:nca}
has sparked renewed interest in the area of time parallelization, a
field that is now just over fifty years old, see the historical
overview \cite{Gander:2014:50Y}. We are interested here in space-time
parallel methods, which can be based on the two fundamental paradigms
of domain decomposition or multigrid. Domain decomposition methods in
space-time lead to waveform relaxation type methods, see
\cite{Gander:1998:STC,Gander:1998:WRA,Giladi:1997:STD} for classical
Schwarz waveform relaxation,
\cite{Gander:1999:OCO,Gander:2003:OSWW,Gander:2004:ABC,Gander:2007:OSW,bennequin2009homographic}
for optimal and optimized variants, and
\cite{Kwok:2014,Mandal:2014,Gander:2014:DNA} for Dirichlet-Neumann and
Neumann-Neumann waveform relaxation. The spatial decompositions can be
combined with parareal to obtain algorithms that run on arbitrary
decompositions of the space-time domain into space-time subdomains, see
\cite{Maday2005,gander:2012:psw}. Space-time multigrid methods were
developed in \cite{Hackbusch1984, Lubich1987, Vandewalle1992,
  Horton1992, Vandewalle1994, Horton1995, Horton1995a, Weinzierl2012},
and reached good F-cycle convergence behavior when appropriate
semi-coarsening and extension operators are used. For a variant for
non-linear problems, see
\cite{emmett2012toward,speck2012massively,speck2013multi}.

We present and analyze here a new space-time parallel multigrid
algorithm that has excellent strong and weak scalability properties on
large scale parallel computers. As a model problem we consider the
heat equation in a bounded domain $\Omega \subset \IR^d$, $d = 1,2,3$
with boundary $\Gamma := \partial \Omega$ on the bounded time interval
$[0,T]$,
\begin{equation}\label{chap4_modelproblem}
\begin{aligned}
 	\partial_t \, u(\vec{x},t) - \Delta u(\vec{x},t)  &=  f(\vec{x},t)
        &\quad &\text{for }
        (\vec{x},t) \in Q := \Omega \times (0,T), \\
	u(\vec{x},t) &= 0 &\quad &\text{for } (\vec{x},t) \in \Sigma:=
        \Gamma \times (0,T), \\
	u(\vec{x},0) &= u_0(\vec{x}) &\quad &\text{for } (\vec{x},t) \in
        \Sigma_0 :=\Omega \times \{0\}.
\end{aligned}
\end{equation}
We divide the time interval $[0,T]$ into subintervals
\[
  0 = t_0 < t_1 < \ldots < t_{N-1} < t_N = T, \quad \text{with } t_n = n \,
   \tau \text{ and } \tau = \frac{T}{N},
\] 
and use a standard finite element discretization in space and a
discontinuous Galerkin approximation in time, which leads to the 
large linear system in space-time
\begin{align}\label{equation1}
  \left[ K_\tau \otimes M_h + M_\tau \otimes K_h \right] \vec{u}_{n+1} =
  \vec{f}_{n+1} + N_{\tau} \otimes M_h \vec{u}_n, \quad n=0,1,\ldots,N-1.
\end{align}
Here, $M_h$ is the standard mass matrix and $K_h$ is the standard
stiffness matrix in space obtained by using the nodal basis functions
$\{\varphi_i\}_{i=1}^{N_x} \subset H_0^1(\Omega)$, i.e.
\begin{align*}
 M_h[i,j] &:= \int_\Omega \varphi_j(\vec{x}) \varphi_i(\vec{x}) \mathrm{d} \vec{x}, \quad K_h[i,j]
 := \int_\Omega \nabla \varphi_j(\vec{x}) \cdot \nabla \varphi_i(\vec{x}) \mathrm{d} \vec{x},\quad i,j = 1,\ldots,N_x. 
\end{align*}
The matrices for the time discretization, where a discontinuous
Galerkin approximation with polynomials of order $p_t \in \IN_0$ is
used, are given by
\begin{align*}
  K_\tau[k,\ell] &:= -\int_{t_{n-1}}^{t_n} \psi_\ell^n(t) \partial_t \psi_k^n(t) \mathrm{d}t
  + \psi_\ell^n(t_n) \psi_k^n(t_n),\quad k,\ell = 1,\ldots,N_t, 
\\
  M_\tau[k,\ell] &:= \int_{t_{n-1}}^{t_n} \psi_\ell^n(t) \psi_k^n(t) \mathrm{d}t, \quad
  N_\tau[k,\ell] := \psi_\ell^{n-1}(t_{n-1}) \psi_k^n(t_{n-1}).
\end{align*}
Here the basis functions for one time interval $(t_{n-1},t_n)$ are
given by $\IP^{p_t}(t_{n-1},t_n) = \mathrm{span}\{ \psi_\ell^n
\}_{\ell=1}^{N_t},$ $N_t = p_t+1$, and for $p_{t}=0$, we would for
  example get a Backward Euler scheme.  The right hand side is given
by
\begin{align*}
  \vec{f}_{n+1}[\ell N_x + j] := \int_{t_{n-1}}^{t_{n}} \int_{\Omega}
  f(\vec{x},t) \varphi_j(\vec{x}) \psi_\ell(t) \mathrm{d} \vec{x} \mathrm{d}t,
\quad j = 1,\ldots,N_x,\ \ell = 1,\ldots,N_t. 
\end{align*}
On the time interval $(t_n,t_{n+1})$, we can therefore define the
approximation
\begin{align*}
  u_h^{n+1}(\vec{x},t) = \sum_{\ell=1}^{N_t} \sum_{j=1}^{N_x} u_{\ell,j}^{n+1}\, \varphi_j(\vec{x})\,
  \psi_\ell(t), \quad \text{with } u_{\ell,j}^{n+1} := \vec{u}_{n+1}[\ell N_x + j],
\end{align*}
where $\vec{u}_{n+1}$ is the solution of the linear system
(\ref{equation1}).  We thus have to solve the  block triangular system
\begin{equation}\label{chap4_generalLinearSystem}
  \begin{pmatrix}
    A_{\tau,h} & & &  \\
    B_{\tau,h} & A_{\tau,h} & &  \\
     & \ddots & \ddots & \\
     & & B_{\tau,h} & A_{\tau,h}
  \end{pmatrix}
  \begin{pmatrix}
    \vec{u}_1 \\
    \vec{u}_2 \\
    \vdots \\
    \vec{u}_N \\
  \end{pmatrix} = 
  \begin{pmatrix}
    \vec{f}_1 \\
    \vec{f}_2 \\
    \vdots \\
    \vec{f}_N \\
  \end{pmatrix},
\end{equation}
with $A_{\tau,h} := K_\tau \otimes M_h + M_\tau \otimes K_h$ and
$B_{\tau,h} := - N_{\tau} \otimes M_h$. 

To solve the linear system (\ref{chap4_generalLinearSystem}), one can
simply apply a forward substitution with respect to the blocks
corresponding to the time steps. Hence one has to invert the matrix
$A_{\tau,h}$ for each time step, where for example a multigrid solver
can be applied. This is the usual way how time dependent problems are
solved when implicit schemes are used \cite{Thomee2006, Hairer1993,
  Hairer2010}, but this process is entirely sequential. We want to
apply a parallelizable space-time multigrid scheme to solve the global
linear system (\ref{chap4_generalLinearSystem}) at once. We present
our method in Section \ref{chap4_multigrid}, and study its properties
in Section \ref{FourierSec} using local Fourier mode
analysis. Numerical examples are given in Section \ref{NumExSec} and
the parallel implementation is discussed in Section \ref{ParallelSec},
where we also show scalability studies. We give an outlook on further
developments in Section \ref{ConclusionSec}.

\section{Multigrid method}\label{chap4_multigrid}

We present now our new space-time multigrid method to solve the linear
space-time system (\ref{chap4_generalLinearSystem}), which we rewrite
in compact form as
\begin{align}\label{chap4_linearSystem}
  \mathcal{L}_{\tau,h} \vec{u} = \vec{f}.
\end{align}
For an introduction to multigrid methods, see \cite{Hackbusch1985,
  Trottenberg2001, Vassilevski2008, Wesseling2004}.  We need a
hierarchical sequence of space-time meshes $\mathcal{T}_{N_L}$ for
$L=0,\ldots,M_L$, which has to be chosen in an appropriate way, see
Section \ref{chap4_twoGridAnalysis}.  For each space-time mesh
$\mathcal{T}_{N_L}$ we compute the system matrix
$\mathcal{L}_{\tau_L,h_L}$ for $L=0,\ldots,M_L$. On the last (finest)
level $M_L$, we have to solve the original system
(\ref{chap4_linearSystem}), i.e.  $\mathcal{L}_{\tau_{M_L},h_{M_L}} =
\mathcal{L}_{\tau,h}$.

We denote by $\mathcal{S}_{\tau_L,h_L}^\nu$ the damped block Jacobi
smoother with $\nu \in \IN$ steps,
\begin{align}\label{chap4_smoother}
\quad\vec{u}^{k+1} = \vec{u}^k + \omega_t (\widetilde{D}_{\tau_L,h_L})^{-1} \left[
  \vec{f} -  \mathcal{L}_{\tau_L,h_L} \vec{u}^{k} \right].
\end{align}
Here $\widetilde{D}_{\tau_L,h_L}^{-1}$ denotes an approximation of the
inverse of the block diagonal matrix $D_{\tau_L,h_L} :=
\mathrm{diag}\{ A_{\tau_L,h_L} \}_{n=1}^{N_L}$, where a block
$A_{\tau_L,h_L}:=M_{h_L} \otimes K_{\tau_L} + K_{h_L} \otimes
M_{\tau_L}$ corresponds to one time step. We will consider in
particular approximating $(D_{\tau_L,h_L})^{-1}$ by applying one
multigrid V-cycle in space at each time step, using a
standard tensor product multigrid, like in \cite{Boerm2001}.  

For the prolongation operator $\mathcal{P}^L$ we use the standard
interpolation from coarse space-time grids to the next finer
space-time grids. The prolongation operator will thus depend on the
space-time hierarchy chosen. The restriction operator is the adjoint
of the prolongation operator, $\mathcal{R}^L =
(\mathcal{P}^L)^\top$. With $\nu_1, \nu_2 \in \IN$ we denote the
number of pre- and post smoothing steps, and $\gamma \in \IN$ defines
the cycle index, where typical choices are $\gamma = 1$ (V-cycle), and
$\gamma = 2$ (W-cycle). On the coarsest level $L=0$ we solve the linear
system, which consists of only one time step, exactly by using an
LU-factorization for the system matrix $\mathcal{L}_{\tau_0,h_0}$. For
a given initial guess we apply this space-time multigrid cycle several
times, until we have reached a given relative error reduction
$\varepsilon_{\mathrm{MG}}$.

To study the convergence behavior of our space-time multigrid method,
we use local Fourier mode analysis. This type of analysis was used in
\cite{Gander:2014:AOA} to study a two-grid cycle for an ODE model
problem, and we will need the following definitions and results, whose
proof can be found in \cite{Gander:2014:AOA}.
\begin{theorem}[Discrete Fourier transform]\label{chap4_theorem1}
  For $m \in \IN$ let $\vec{u} \in \IR^{2 m}$. Then
  \begin{align*}
    \vec{u} = \sum_{k = 1-m}^{m} \hat{u}_k \vec{\varphi}(\theta_k), \qquad
    \vec{\varphi}(\theta_k)[\ell]:= e^{\i \ell \theta_k},\quad \ell = 1,\ldots,
    2m,  \qquad \theta_k := \frac{k \pi}{m},
  \end{align*}
  with the coefficients
  \begin{align*}
    \hat{u}_k := \frac{1}{2m} \spv{\vec{u}}{\vec{\varphi}(-\theta_k)}_{\ell^2} =
    \frac{1}{2m} \sum_{\ell = 1}^{2m} \vec{u}[\ell]
    \vec{\varphi}(-\theta_k)[\ell], \qquad \text{for } k = 1-m,\ldots,m.
  \end{align*}
\end{theorem}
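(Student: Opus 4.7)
The plan is to prove this by showing that the family $\{\vec{\varphi}(\theta_k)\}_{k=1-m}^{m}$ forms an orthogonal basis of $\mathbb{C}^{2m}$ with respect to the standard bilinear pairing $\spv{\cdot}{\cdot}_{\ell^2}$ used in the statement (note that pairing with $\vec{\varphi}(-\theta_k)$ plays the role of conjugation, since $\overline{e^{\i \ell \theta_k}} = e^{-\i \ell \theta_k}$). Once orthogonality is in hand, the expansion formula and the coefficient formula follow simultaneously from projecting $\vec u$ onto each basis vector.

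First I would verify the counting: the index set $k = 1-m, \ldots, m$ contains exactly $2m$ values, matching the ambient dimension, so orthogonality will automatically give a basis. Then the main computation is the orthogonality relation
\begin{equation*}
\spv{\vec{\varphi}(\theta_j)}{\vec{\varphi}(-\theta_k)}_{\ell^2}
 = \sum_{\ell=1}^{2m} e^{\i \ell (\theta_j - \theta_k)}
 = \sum_{\ell=1}^{2m} e^{\i \ell (j-k) \pi / m}.
\end{equation*}
For $j = k$ every summand equals $1$, so the sum is $2m$. For $j \neq k$, writing $\alpha := (j-k)\pi/m$, the geometric series formula gives $\sum_{\ell=1}^{2m} e^{\i \ell \alpha} = e^{\i \alpha}(e^{\i 2m \alpha} - 1)/(e^{\i \alpha} - 1)$; since $j-k$ is a nonzero integer with $|j-k| < 2m$ the denominator is nonzero while $e^{\i 2m \alpha} = e^{\i 2(j-k)\pi} = 1$, so the sum vanishes. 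This establishes $\spv{\vec{\varphi}(\theta_j)}{\vec{\varphi}(-\theta_k)}_{\ell^2} = 2m \, \delta_{jk}$.

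Given orthogonality, I would conclude by ansatz: write $\vec u = \sum_{k=1-m}^{m} c_k \vec{\varphi}(\theta_k)$, which is solvable because we have $2m$ linearly independent vectors spanning $\mathbb{C}^{2m} \supset \mathbb{R}^{2m}$. Pairing both sides with $\vec{\varphi}(-\theta_j)$ and using the orthogonality relation yields $\spv{\vec u}{\vec{\varphi}(-\theta_j)}_{\ell^2} = 2m\, c_j$, so $c_j = \hat{u}_j$ as defined in the statement.

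The only place where care is required is the geometric-sum step, where one must check that $\alpha$ never falls on a multiple of $2\pi$ for $j \neq k$ in the range $1-m \le j,k \le m$; since then $|j-k| \in \{1,\ldots,2m-1\}$, the ratio $(j-k)/m$ is not a nonzero even integer, so $e^{\i\alpha} \neq 1$ and the closed form is valid. Beyond that small bookkeeping issue, the proof is pure computation and no substantial obstacle is expected.
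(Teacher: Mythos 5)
Your proof is correct. Note that the paper does not actually contain a proof of this theorem; it explicitly defers to the reference \cite{Gander:2014:AOA} for this and the surrounding Fourier-analysis lemmas, so there is no in-paper argument to compare against. What you give is the standard discrete-Fourier-transform argument: the vectors $\vec{\varphi}(\theta_k)$, $k = 1-m,\ldots,m$, number $2m$ and are pairwise orthogonal under the bilinear pairing $\spv{\cdot}{\cdot}_{\ell^2}$ (with $\vec{\varphi}(-\theta_k)$ playing the role of the conjugate), hence form a basis of $\IC^{2m}$, and projecting $\vec{u}$ onto each basis vector produces exactly the coefficient formula for $\hat{u}_k$. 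The geometric-sum computation showing $\sum_{\ell=1}^{2m} e^{\i \ell (j-k)\pi/m} = 2m\,\delta_{jk}$ is right, and your observation that $(j-k)/m$ is never a nonzero even integer for $1-m \le j,k \le m$, so that $e^{\i\alpha}\neq 1$ and the closed form for the geometric sum is valid, is precisely the bookkeeping needed to make the $j\neq k$ case watertight.
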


\begin{definition}[Fourier modes, Fourier frequencies]
  Let $N_L \in \IN$. Then the vector valued function
  $\vec{\varphi}(\theta_k)[\ell]:= e^{\i \ell \theta_k}$, $\ell = 1,\ldots,N_L$
  is called Fourier mode with frequency
  \begin{align*}
  \theta_k \in \Theta_L := \left\{ \frac{2 k \pi}{N_L} : k = 1 -
    \frac{N_L}{2},\ldots,\frac{N_L}{2} \right\} \subset (-\pi,\pi].
  \end{align*}
  The frequencies $\Theta_L$ are further separated into low and high
  frequencies
  \begin{align*}
     \Theta_L^{\mathrm{low}} &:= \Theta_L \cap (-\frac{\pi}{2},\frac{\pi}{2}],\\
     \Theta_L^{\mathrm{high}} &:= \Theta_L \cap \left( (-\pi,-\frac{\pi}{2}]
       \cup (\frac{\pi}{2},\pi] \right) = \Theta_L \setminus
     \Theta_L^{\mathrm{low}}.
  \end{align*}
\end{definition}

\begin{definition}[Fourier space]
  For $N_{L}, N_t \in \IN$ let the vector $\vec{\Phi}^L(\theta_k) \in \IC^{N_t
    N_L}$ be defined as in Lemma \ref{chap4_lemma1} with frequency $\theta_k
  \in \Theta_L$. Then we define the linear space of Fourier modes with
  frequency $\theta_k$ as
  \begin{align*}
    \Psi_L(\theta_k) &:= \mathrm{span}\left\{ \vec{\Phi}^L(\theta_k) \right\} \\
    &\phantom{:}= \left\{ \vec{\psi}^L(\theta_k) \in \IC^{N_t N_L} :
      \vec{\psi}_n^L(\theta_k) =
      U \vec{\Phi}_n^L(\theta_k),\; n = 1,\ldots,N_L \text{ and } U \in \IC^{N_t
        \times N_t} \right\}.
\end{align*}
\end{definition}

\begin{definition}[Space of harmonics]
   For $N_{L}, N_t \in \IN$ and for a low frequency $\theta_k\in
   \Theta_L^{\mathrm{low}}$ let the vector $\vec{\Phi}^L(\theta_k) \in \IC^{N_t
     N_L}$ be defined as in Lemma \ref{chap4_lemma1}. Then the linear space of
   harmonics with frequency $\theta_k$ is given by
  \begin{align*}
    \mathcal{E}_L(\theta_k) &:= \mathrm{span}\left\{ \vec{\Phi}^L(\theta_k),
      \vec{\Phi}^L(\gamma(\theta_k)) \right\} \\
    &\phantom{:}= \big\{ \vec{\psi}^L(\theta_k) \in \IC^{N_t N_L} :
    \vec{\psi}_n^L(\theta_k) =
      U_1 \vec{\Phi}_n^L(\theta_k) + U_2 \vec{\Phi}_n^L(\gamma(\theta_k)), \\
      &\qquad\qquad\qquad\qquad\qquad\qquad n = 1,\ldots,N_L \text{ and } U_1,
      U_2 \in
      \IC^{N_t \times N_t} \big\}.
\end{align*}
\end{definition}

\begin{lemma}\label{chap4_lemma1}
  The vector $\vec{u} = (\vec{u}_1, \vec{u}_2,\ldots,\vec{u}_{N_L})^\top \in
  \IR^{N_L\,N_t}$ for $N_{L-1}, N_t \in \IN$ and $N_L = 2 N_{L-1}$ can be written
  as
  \[ \vec{u} = \sum_{k=-N_{L-1}+1}^{N_{L-1}} \vec{\psi}^L(\theta_k, U) =
  \sum_{\theta_k \in \Theta_L} \vec{\psi}^L(\theta_k,U), \]
  with the vectors
  $\vec{\psi}_n^L(\theta_k, U) := U \vec{\Phi}_n^L(\theta_k)$ and
  $\vec{\Phi}_n^L(\theta_k)[\ell] := \vec{\varphi}(\theta_k)[n]$
  for $n=1,\ldots,N_L$ and $\ell = 1,\ldots,N_t$,
  and the coefficient matrix
  $U = \mathrm{diag}(\hat{u}_k[1],\ldots,\hat{u}_k[N_t]) \in \IC^{N_t \times N_t}$
  with the coefficients
  $\hat{u}_k[\ell] := \frac{1}{N_{L}} \sum_{i=1}^{N_L} u_{i}[\ell]
  \vec{\varphi}(-\theta_k)[i]$ for $k = 1 -N_{L-1},\ldots,N_{L-1}$.
\end{lemma}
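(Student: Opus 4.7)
The plan is to reduce the block-vector statement to $N_t$ independent applications of the scalar discrete Fourier transform stated in Theorem \ref{chap4_theorem1}. Since the coefficient matrix $U$ in the conclusion is diagonal, and since $\vec{\Phi}_n^L(\theta_k)[\ell]=e^{\i n\theta_k}$ does not depend on the component index $\ell$, the decomposition acts on each of the $N_t$ components of the block entries $\vec{u}_n\in\IR^{N_t}$ separately. So the strategy is: project onto a single component, apply the scalar result, then reassemble.

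First, for every fixed $\ell\in\{1,\ldots,N_t\}$ I would form the scalar sequence $v^{(\ell)}_n := \vec{u}_n[\ell]$, $n=1,\ldots,N_L$, and view it as a vector in $\IR^{N_L}$. Since $N_L = 2N_{L-1}$, I apply Theorem \ref{chap4_theorem1} with $m=N_{L-1}$; the resulting scalar frequencies $\tfrac{k\pi}{N_{L-1}}=\tfrac{2k\pi}{N_L}$ indexed by $k=1-N_{L-1},\ldots,N_{L-1}$ coincide exactly with the set $\Theta_L$. This yields
\[
\vec{u}_n[\ell] \;=\; \sum_{k=1-N_{L-1}}^{N_{L-1}} \hat{v}^{(\ell)}_k\, e^{\i n\theta_k}, \qquad
\hat{v}^{(\ell)}_k = \tfrac{1}{N_L}\sum_{i=1}^{N_L} v^{(\ell)}_i\,\vec{\varphi}(-\theta_k)[i].
\]
A direct comparison with the definition of $\hat{u}_k[\ell]$ in the lemma statement shows $\hat{v}^{(\ell)}_k=\hat{u}_k[\ell]$, so the coefficients match.

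Second, I would reassemble the block vector. Stacking the identity over $\ell=1,\ldots,N_t$ gives
\[
\vec{u}_n \;=\; \sum_{\theta_k\in\Theta_L}\bigl(\hat{u}_k[1]\,e^{\i n\theta_k},\ldots,\hat{u}_k[N_t]\,e^{\i n\theta_k}\bigr)^\top.
\]
Because $\vec{\Phi}_n^L(\theta_k)\in\IC^{N_t}$ has every entry equal to $e^{\i n\theta_k}$, the vector in parentheses equals $U\vec{\Phi}_n^L(\theta_k)$ with $U=\mathrm{diag}(\hat{u}_k[1],\ldots,\hat{u}_k[N_t])$, which by definition is $\vec{\psi}^L_n(\theta_k,U)$. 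Concatenating over $n$ yields the claimed identity.

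There is no real obstacle here; the argument is essentially a bookkeeping exercise. The only point requiring a moment's care is the reindexing between the scalar frequencies $\tfrac{k\pi}{m}$ of Theorem \ref{chap4_theorem1} and the block-level frequencies $\tfrac{2k\pi}{N_L}$ of $\Theta_L$, which works out precisely because $N_L=2N_{L-1}$. Everything else follows from the fact that $U$ is diagonal and $\vec{\Phi}_n^L(\theta_k)$ is constant in its component index, so the block transform decouples into $N_t$ scalar transforms.
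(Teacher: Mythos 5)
Your proof is correct: reducing to $N_t$ independent scalar discrete Fourier transforms via Theorem \ref{chap4_theorem1} (with $m = N_{L-1}$, so the scalar frequencies $\tfrac{k\pi}{m}$ coincide with $\Theta_L$) and reassembling using that $U$ is diagonal and $\vec{\Phi}_n^L(\theta_k)$ is constant in $\ell$ is exactly the right argument. Note that this paper itself defers the proof of Lemma \ref{chap4_lemma1} to the cited reference \cite{Gander:2014:AOA}, but the component-by-component reduction you use is precisely the technique the authors deploy in their own proof of the analogous Lemma \ref{chap4_lemma15}, so your route is the standard one here.
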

\begin{lemma}\label{chap4_theorem2}
  For $\lambda \in \IC$ the eigenvalues of the matrix $(K_{\tau_L} + \lambda M_{\tau_L})^{-1} N_{\tau_L} \in
  \IC^{N_t \times N_t}$ are given by
  \begin{align*}
    \sigma((K_{\tau_L} + \lambda  M_{\tau_L})^{-1} N_{\tau_L}) = \{ 0, R(\lambda  \tau_L) \},
  \end{align*}
  where $R(z)$ is the A-stability function of
  the given discontinuous Galerkin time stepping scheme. In particular the A-stability function $R(z)$ is given by the $(p_t,p_t+1)$ subdiagonal Pad\'{e}  approximation of the exponential function $e^{z}$.
\end{lemma}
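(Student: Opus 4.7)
The plan is to exploit a rank-one structure of $N_{\tau_L}$ that trivialises the spectral computation. Reading off the definition $N_{\tau_L}[k,\ell] = \psi_\ell^{n-1}(t_{n-1})\,\psi_k^n(t_{n-1})$, one recognises the outer product $N_{\tau_L} = \vec{a}\,\vec{b}^{\top}$ with $\vec{a}[k] := \psi_k^n(t_{n-1})$ and $\vec{b}[\ell] := \psi_\ell^{n-1}(t_{n-1})$. Consequently the matrix of interest factors as $(K_{\tau_L}+\lambda M_{\tau_L})^{-1} N_{\tau_L} = \vec{u}^{\ast}(\lambda)\,\vec{b}^{\top}$ with $\vec{u}^{\ast}(\lambda) := (K_{\tau_L}+\lambda M_{\tau_L})^{-1}\vec{a}$, and therefore has rank at most one. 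Its spectrum must then consist of $0$ with multiplicity $N_t-1$ together with the single nonzero eigenvalue $\mu(\lambda) := \vec{b}^{\top}\vec{u}^{\ast}(\lambda)$ (the trace of the rank-one matrix), which already yields the structural part of the claim.

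The second step identifies $\mu(\lambda)$ with the A-stability function $R(\lambda\tau_L)$. I would verify that $(K_{\tau_L}+\lambda M_{\tau_L})\,\vec{u} = c\,\vec{a}$ is precisely the linear system produced by the upwind DG-in-time discretisation of the scalar test problem $u'(t)+\lambda u(t) = 0$ on the slab $(t_{n-1},t_n)$ with inflow value $u(t_{n-1}^{-})=c$; this is a direct integration-by-parts bookkeeping step using the definitions of $K_{\tau_L}$ and $M_{\tau_L}$, and the right-hand-side $c\,\vec{a}$ precisely encodes the jump term. Taking $c=1$, the discrete solution is $\vec{u}^{\ast}(\lambda)$, whose outflow value at $t_n^{-}$ is $\sum_k u^{\ast}(\lambda)[k]\,\psi_k^n(t_n) = \vec{b}^{\top}\vec{u}^{\ast}(\lambda)$, since under the standard convention that the slab-wise bases are translates of a common reference basis one has $\psi_k^n(t_n)=\psi_k^{n-1}(t_{n-1})=\vec{b}[k]$. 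By the very definition of the A-stability function as the ratio of outflow to inflow value over one step, this quantity equals $R(\lambda\tau_L)$, giving $\mu(\lambda)=R(\lambda\tau_L)$.

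The characterisation of $R$ as the $(p_t,p_t+1)$ subdiagonal Pad\'e approximant of $e^{z}$ is a classical result for upwind DG-in-time of polynomial order $p_t$ and will simply be invoked at the end. I do not expect serious obstacles from the linear algebra—the rank-one reduction is essentially free—but rather from the bookkeeping in the identification step: keeping track of jump terms, signs, and the slab-to-slab basis convention when matching $\vec{b}^{\top}\vec{u}^{\ast}(\lambda)$ to the one-step transfer operator of the scalar DG scheme. Possible degeneracy of $K_{\tau_L}+\lambda M_{\tau_L}$ at isolated values of $\lambda$ is avoided by a continuity argument, noting that for $\lambda$ on the physically relevant branch (spectrum of $M_{h_L}^{-1}K_{h_L}$ coming from parabolic spatial discretisation) this matrix is nonsingular.
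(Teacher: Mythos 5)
Your approach is correct and is the natural one: the rank-one factorisation $N_{\tau_L}=\vec{a}\,\vec{b}^{\top}$ immediately gives the $N_t-1$ zero eigenvalues and reduces the remaining eigenvalue to the scalar $\vec{b}^{\top}(K_{\tau_L}+\lambda M_{\tau_L})^{-1}\vec{a}$, and identifying this scalar with the one-step DG transfer factor for the scalar test ODE is precisely the right way to finish. The paper itself defers this proof to \cite{Gander:2014:AOA}, so there is no in-text argument to compare against, but your route is the one the rank-one structure of $N_{\tau_L}$ essentially forces.

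One sign subtlety deserves attention, because it exposes a slip in the lemma as stated. Your test problem is $u'+\lambda u=0$, whose exact one-step transfer is $e^{-\lambda\tau_L}$; with $R(z)$ the $(p_t,p_t+1)$ subdiagonal Pad\'e approximant of $e^{z}$ (as the lemma's last sentence stipulates), the DG transfer factor is therefore $R(-\lambda\tau_L)$, not $R(\lambda\tau_L)$. This is also how the result is actually invoked later: in the proof of Lemma~\ref{chap4_lemma20} the nonzero eigenvalue of $(K_{\tau_L}+h_L^{-2}\beta(\theta_x)M_{\tau_L})^{-1}N_{\tau_L}$ is taken to be $\alpha(\theta_x,\mu)=R(-\mu\beta(\theta_x))$, which with $\lambda=h_L^{-2}\beta(\theta_x)$ and $\mu=\tau_L h_L^{-2}$ is exactly $R(-\lambda\tau_L)$. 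So the $R(\lambda\tau_L)$ in the lemma statement (which you reproduced in your final line) appears to be a sign typo; your own derivation, carried through consistently, actually produces the corrected form $R(-\lambda\tau_L)$, and you should state that explicitly rather than assert agreement with the lemma's printed sign.
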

\begin{lemma}\label{chap4_lemma9}
  The mapping $\gamma : \Theta_L^{\mathrm{low}} \rightarrow
  \Theta_L^{\mathrm{high}}$ with
  $\gamma(\theta_k) := \theta_k - \mathrm{sign}(\theta_k) \pi$
  is a one to one mapping.
\end{lemma}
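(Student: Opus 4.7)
The plan is to establish bijectivity in two steps: first verify that $\gamma$ really maps $\Theta_L^{\mathrm{low}}$ into $\Theta_L^{\mathrm{high}}$ (well-definedness), then prove injectivity. Since $\Theta_L$ has $N_L$ elements, and by definition it is partitioned into the disjoint subsets $\Theta_L^{\mathrm{low}}$ and $\Theta_L^{\mathrm{high}}$ of equal cardinality $N_L/2$, any injective map between them is automatically a bijection, so surjectivity comes for free.

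For well-definedness I would argue by cases on the sign of $\theta_k \in \Theta_L^{\mathrm{low}}$. If $\theta_k > 0$, then $\theta_k \in (0, \pi/2]$, so $\gamma(\theta_k) = \theta_k - \pi \in (-\pi, -\pi/2]$. If $\theta_k < 0$, then $\theta_k \in (-\pi/2, 0)$, so $\gamma(\theta_k) = \theta_k + \pi \in (\pi/2, \pi)$. Both target intervals sit inside the high-frequency region. The boundary case $\theta_k = 0$ (corresponding to $k=0$) has to be handled by the convention $\mathrm{sign}(0) := -1$, giving $\gamma(0) = \pi \in \Theta_L^{\mathrm{high}}$. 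Finally, one checks that the shift by $\pm\pi = \pm (N_L/2)(2\pi/N_L)$ preserves the lattice $\{2k\pi/N_L\}$, so that $\gamma(\theta_k) = 2(k \pm N_L/2)\pi/N_L$ still has index in $\{1-N_L/2,\ldots,N_L/2\}$ and hence lies in $\Theta_L$.

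For injectivity, suppose $\gamma(\theta_{k_1}) = \gamma(\theta_{k_2})$ for two elements of $\Theta_L^{\mathrm{low}}$. If they have the same sign, $\gamma$ restricts to a pure translation on that sign class, forcing $\theta_{k_1} = \theta_{k_2}$. If the signs differ, the images lie in the disjoint pieces $(-\pi, -\pi/2]$ and $(\pi/2, \pi]$ (with the convention above routing $0$ into $\pi$), so equality is impossible.

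The only real obstacle is bookkeeping around the interval endpoints (which inequalities are strict versus non-strict) and the exceptional value $\theta_k = 0$, which is what makes the sign-convention necessary; no deeper argument is required beyond unpacking the definitions.
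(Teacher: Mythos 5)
The paper does not actually provide a proof of Lemma~\ref{chap4_lemma9}; it is one of the results cited as proved in the reference \cite{Gander:2014:AOA}, so there is no in-text argument to compare yours against. Your proof is correct and takes the only natural route: verify well-definedness by a sign case split, establish injectivity the same way, and conclude bijectivity from the equal cardinalities $|\Theta_L^{\mathrm{low}}| = |\Theta_L^{\mathrm{high}}| = N_L/2$. You also put your finger on the one genuinely delicate point, namely that with the usual convention $\mathrm{sign}(0)=0$ the map would fix $\theta_0=0$ and fail to be well-defined; adopting $\mathrm{sign}(0):=-1$ (so $\gamma(0)=\pi$) is one consistent fix, and reading $\gamma$ modulo $2\pi$ (so $-\pi$ is identified with $\pi$, which does lie in $(-\pi,\pi]$) is an equivalent one. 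Either way, your observation that the endpoint $\theta_k=0$ needs an explicit convention compatible with the half-open intervals in the paper's definitions is exactly the right thing to flag.
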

\begin{lemma}\label{chap4_lemma12}
  Let $\theta_k \in \Theta_L^{\mathrm{low}}$. Then the restriction
  operator $\mathcal{R}^L$ as defined in (\ref{chap4_restrictionTime}) has the
  mapping property
  \begin{align*}
    \mathcal{R}^L : \mathcal{E}_L(\theta_k) \rightarrow \Psi_{L-1}(2 \theta_k),
  \end{align*}
  with the mapping
  \begin{align*}
    \begin{pmatrix} U_1\\U_2 \end{pmatrix} \mapsto \begin{pmatrix}
    \hat{\mathcal{R}}(\theta_k) &
    \hat{\mathcal{R}}(\gamma(\theta_k)) \end{pmatrix} \begin{pmatrix}
    U_1\\U_2 \end{pmatrix} \in \IC^{N_t \times N_t}
  \end{align*}
  and the Fourier symbol 
  \begin{align*}
    \hat{\mathcal{R}}(\theta_k) := e^{-\i \theta_k} R_1 + R_2.
  \end{align*}
\end{lemma}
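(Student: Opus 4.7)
The plan is to apply $\mathcal{R}^L$ to a generic element of $\mathcal{E}_L(\theta_k)$ blockwise, factor the time exponentials, and invoke the aliasing identity from Lemma \ref{chap4_lemma9} to collapse both modes onto the coarse frequency $2\theta_k$.

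First, I would write the time restriction from (\ref{chap4_restrictionTime}) in blockwise form as
\[
(\mathcal{R}^L \vec{u})_m = R_1 \vec{u}_{2m-1} + R_2 \vec{u}_{2m}, \qquad m = 1,\ldots,N_{L-1},
\]
with weight matrices $R_1,R_2 \in \IR^{N_t\times N_t}$. For $\vec{\psi}^L(\theta_k)\in\mathcal{E}_L(\theta_k)$ with coefficient matrices $U_1,U_2\in\IC^{N_t\times N_t}$, Lemma \ref{chap4_lemma1} gives $\vec{\psi}_n^L = U_1 \vec{\Phi}_n^L(\theta_k) + U_2 \vec{\Phi}_n^L(\gamma(\theta_k))$, and since $\vec{\Phi}_n^L(\eta)$ depends on $n$ only through the scalar factor $e^{\i n \eta}$ (multiplying a constant block vector $\vec{e}\in\IR^{N_t}$), substitution yields for coarse block $m$
\[
(\mathcal{R}^L \vec{\psi}^L)_m = \bigl(e^{\i(2m-1)\theta_k}R_1 + e^{\i 2m\theta_k}R_2\bigr)U_1 \vec{e} + \bigl(e^{\i(2m-1)\gamma(\theta_k)}R_1 + e^{\i 2m\gamma(\theta_k)}R_2\bigr)U_2 \vec{e}.
\]

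Next, I would factor $e^{\i 2m\theta_k}$ out of the first parenthesis, which produces exactly $\hat{\mathcal{R}}(\theta_k) = e^{-\i\theta_k}R_1 + R_2$, and factor $e^{\i 2m\gamma(\theta_k)}$ out of the second, which produces $\hat{\mathcal{R}}(\gamma(\theta_k))$. The crucial step is to apply Lemma \ref{chap4_lemma9} in the form $\gamma(\theta_k) = \theta_k - \mathrm{sign}(\theta_k)\pi$, giving
\[
e^{\i 2m\gamma(\theta_k)} = e^{\i 2m\theta_k}\cdot e^{-2\i m\,\mathrm{sign}(\theta_k)\pi} = e^{\i 2m\theta_k},
\]
since $2m\,\mathrm{sign}(\theta_k)$ is always an even integer; this is the $\gamma$-to-$2\theta_k$ aliasing. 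Combining yields
\[
(\mathcal{R}^L \vec{\psi}^L)_m = e^{\i m (2\theta_k)}\bigl(\hat{\mathcal{R}}(\theta_k)U_1 + \hat{\mathcal{R}}(\gamma(\theta_k))U_2\bigr)\vec{e} = \hat{V}\,\vec{\Phi}_m^{L-1}(2\theta_k),
\]
with $\hat{V} = \hat{\mathcal{R}}(\theta_k)U_1 + \hat{\mathcal{R}}(\gamma(\theta_k))U_2$, which identifies the image as an element of $\Psi_{L-1}(2\theta_k)$ with exactly the stated coefficient matrix.

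The main obstacle I anticipate is the index bookkeeping: one must fix consistently with (\ref{chap4_restrictionTime}) that coarse block $m$ is built from fine blocks $2m-1$ and $2m$ (as opposed to $2m$ and $2m+1$) so that the factored exponential produces $e^{-\i\theta_k}R_1 + R_2$ rather than $R_1 + e^{\i\theta_k}R_2$ or a similar variant; with the opposite convention the scalar would attach to $R_2$ instead. Once this convention is pinned down, the remainder of the argument is just the factorization above combined with the even-integer aliasing identity.
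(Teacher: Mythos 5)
Your argument is correct and is the standard Fourier-analysis computation for this kind of restriction operator; the paper itself defers the proof of this lemma to \cite{Gander:2014:AOA}, so there is no in-paper proof to compare against, but your derivation is exactly the expected one. The only substantive check is the index convention you flag explicitly: in (\ref{chap4_restrictionTime}) coarse block $m$ indeed gathers fine blocks $2m-1$ and $2m$, so factoring out the coarse exponential $e^{\i 2 m \theta_k}$ attaches $e^{-\i\theta_k}$ to $R_1$ and yields precisely $\hat{\mathcal{R}}(\theta_k) = e^{-\i\theta_k}R_1 + R_2$, and the aliasing identity $e^{\i 2 m\gamma(\theta_k)} = e^{\i 2 m\theta_k}$ from Lemma~\ref{chap4_lemma9} collapses the $\gamma(\theta_k)$ mode onto the same coarse frequency. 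The resulting coefficient matrix $\hat{\mathcal{R}}(\theta_k) U_1 + \hat{\mathcal{R}}(\gamma(\theta_k)) U_2$ matches the stated block row vector acting on $(U_1,U_2)^\top$.
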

\begin{lemma}\label{chap4_lemma13}
  Let $\theta_k \in \Theta_L^{\mathrm{low}}$. Then the the
  prolongation operator $\mathcal{P}^L$ as defined in (\ref{chap4_restrictionTime}) has the
  mapping property 
  \begin{align*}
    \mathcal{P}^L : \Psi_{L-1}(2 \theta_k) \rightarrow \mathcal{E}_L(\theta_k),
  \end{align*}
  with the mapping
  \begin{align*}
    U \mapsto \begin{pmatrix}
    \hat{\mathcal{P}}(\theta_k) \\
    \hat{\mathcal{P}}(\gamma(\theta_k)) \end{pmatrix} U \in \IC^{2 N_t \times N_t}
  \end{align*}
  and the Fourier symbol 
  \begin{align*}
    \hat{\mathcal{P}}(\theta_k) := \frac{1}{2}\left[e^{\i \theta_k} R_1^\top +
      R_2^\top\right].
  \end{align*}
\end{lemma}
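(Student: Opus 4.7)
The plan is to mirror the argument used for the restriction in Lemma~\ref{chap4_lemma12} and to exploit the defining relation $\mathcal{P}^L = (\mathcal{R}^L)^\top$. I would begin by taking a generic element $\vec{\psi}^{L-1}(2\theta_k,U)\in\Psi_{L-1}(2\theta_k)$, whose $n$-th block of size $N_t$ is $U\vec{\Phi}_n^{L-1}(2\theta_k)$, and computing the two fine-grid blocks produced by $\mathcal{P}^L$ at time indices $2n-1$ and $2n$. Since the restriction defined in (\ref{chap4_restrictionTime}) combines two consecutive fine blocks into $R_1\vec{u}_{2n-1}+R_2\vec{u}_{2n}$, which is exactly what produces the symbol in Lemma~\ref{chap4_lemma12}, taking the transpose shows that $\mathcal{P}^L$ sends a coarse block $\vec{v}_n$ to $R_1^\top\vec{v}_n$ at the odd fine index $2n-1$ and to $R_2^\top\vec{v}_n$ at the even fine index $2n$.

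Next, I would express those two fine blocks in the fine-grid Fourier basis. The relevant identities are
\begin{align*}
\vec{\Phi}_{2n-1}^L(\theta_k) = e^{-\i\theta_k}\vec{\Phi}_n^{L-1}(2\theta_k),\qquad \vec{\Phi}_{2n}^L(\theta_k) = \vec{\Phi}_n^{L-1}(2\theta_k),
\end{align*}
together with the aliasing relation $e^{\i\gamma(\theta_k)}=-e^{\i\theta_k}$ obtained from $\gamma(\theta_k)=\theta_k-\mathrm{sign}(\theta_k)\pi$ in Lemma~\ref{chap4_lemma9}, which implies $\vec{\Phi}_{2n-1}^L(\gamma(\theta_k))=-e^{-\i\theta_k}\vec{\Phi}_n^{L-1}(2\theta_k)$ and $\vec{\Phi}_{2n}^L(\gamma(\theta_k))=\vec{\Phi}_n^{L-1}(2\theta_k)$. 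Writing the ansatz $U_1\vec{\Phi}_m^L(\theta_k)+U_2\vec{\Phi}_m^L(\gamma(\theta_k))$ for the prolongated vector and matching the odd and even cases against the blocks found in the previous step then reduces the problem to the $2\times 2$ block system
\begin{align*}
U_1 + U_2 &= R_2^\top U,\\
U_1 - U_2 &= e^{\i\theta_k}\,R_1^\top U,
\end{align*}
whose solution is $U_1 = \tfrac{1}{2}[e^{\i\theta_k}R_1^\top + R_2^\top]U$ and $U_2 = \tfrac{1}{2}[-e^{\i\theta_k}R_1^\top + R_2^\top]U$. Identifying $-e^{\i\theta_k}=e^{\i\gamma(\theta_k)}$ recognises these coefficients as $\hat{\mathcal{P}}(\theta_k)U$ and $\hat{\mathcal{P}}(\gamma(\theta_k))U$, which simultaneously establishes the mapping property $\mathcal{P}^L:\Psi_{L-1}(2\theta_k)\to\mathcal{E}_L(\theta_k)$ and the claimed Fourier symbol.

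The part that requires the most care is not conceptual but the bookkeeping: tracking the parity phase $e^{-\i\theta_k}$ that distinguishes odd from even fine-grid indices, and ensuring the factor $\tfrac{1}{2}$ emerges from inverting the $2\times 2$ system rather than being absorbed into a DFT normalisation. As a sanity check, since $\mathcal{P}^L = (\mathcal{R}^L)^\top$ and Lemma~\ref{chap4_lemma12} gives $\hat{\mathcal{R}}(\theta_k)=e^{-\i\theta_k}R_1+R_2$, the Fourier symbol of the prolongation should essentially be $\tfrac{1}{2}$ times the conjugate transpose of $\hat{\mathcal{R}}(\theta_k)$, and indeed $\tfrac{1}{2}[e^{\i\theta_k}R_1^\top + R_2^\top]$ matches this expectation, with the factor $\tfrac{1}{2}$ reflecting the $N_{L-1}:N_L=1:2$ normalisation between coarse and fine Fourier bases.
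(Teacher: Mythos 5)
Your proposal is correct, and since the paper only cites \cite{Gander:2014:AOA} for the proof of Lemma~\ref{chap4_lemma13} rather than reproducing it, there is nothing in this manuscript to compare against; the route you take is the natural one and consistent with the Fourier--symbol computations the paper does spell out (e.g.\ in Lemmas~\ref{chap4_lemma17}, \ref{chap4_lemma26}--\ref{chap4_lemma29}). The key steps all check out: from the block structure of $\mathcal{R}^L$ in (\ref{chap4_restrictionTime}) one reads off $(\mathcal{R}^L\vec{u})_n = R_1\vec{u}_{2n-1}+R_2\vec{u}_{2n}$, so transposing gives $(\mathcal{P}^L\vec{v})_{2n-1}=R_1^\top\vec{v}_n$ and $(\mathcal{P}^L\vec{v})_{2n}=R_2^\top\vec{v}_n$; the parity identities $\vec{\Phi}_{2n}^L(\theta_k)=\vec{\Phi}_n^{L-1}(2\theta_k)$ and $\vec{\Phi}_{2n-1}^L(\theta_k)=e^{-\i\theta_k}\vec{\Phi}_n^{L-1}(2\theta_k)$, together with $e^{\i\gamma(\theta_k)}=-e^{\i\theta_k}$, produce exactly the $2\times 2$ system $U_1+U_2=R_2^\top U$, $U_1-U_2=e^{\i\theta_k}R_1^\top U$; and solving it yields $U_1=\hat{\mathcal{P}}(\theta_k)U$ and $U_2=\hat{\mathcal{P}}(\gamma(\theta_k))U$ with the factor $\tfrac{1}{2}$ coming from the matrix inversion, as you note. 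Your sanity check that $\hat{\mathcal{P}}(\theta_k)$ is half the (conjugate) transpose of $\hat{\mathcal{R}}(\theta_k)$ is consistent with $R_1,R_2$ being real. No gaps.
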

\begin{lemma}\label{chap4_lemma14}
  The frequency mapping 
  \[ \beta: \Theta_L^{\mathrm{low}} \rightarrow \Theta_{L-1}\qquad \text{with}
  \qquad \theta_k \mapsto 2 \theta_k \]
  is a one to one mapping.
\end{lemma}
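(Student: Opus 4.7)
The plan is to prove Lemma \ref{chap4_lemma14} by direct index chasing, using the definitions of $\Theta_L$, $\Theta_L^{\mathrm{low}}$, and the hierarchical relation $N_L = 2 N_{L-1}$. The map $\beta$ is explicit, so the task reduces to showing that the image of $\Theta_L^{\mathrm{low}}$ under doubling is exactly $\Theta_{L-1}$ and that no two distinct frequencies collapse to the same target.

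First I would write out $\Theta_L^{\mathrm{low}}$ explicitly. By definition $\theta_k = \tfrac{2 k \pi}{N_L} \in \Theta_L$ for $k = 1 - \tfrac{N_L}{2}, \ldots, \tfrac{N_L}{2}$, and the low-frequency restriction $\theta_k \in (-\tfrac{\pi}{2},\tfrac{\pi}{2}]$ becomes $-\tfrac{N_L}{4} < k \leq \tfrac{N_L}{4}$. Using $N_L = 2 N_{L-1}$, this is exactly $k \in \{1 - \tfrac{N_{L-1}}{2}, \ldots, \tfrac{N_{L-1}}{2}\}$, so $\Theta_L^{\mathrm{low}}$ is parametrized by the same index set as $\Theta_{L-1}$.

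Next I would compute the image: for such $k$,
\[
  \beta(\theta_k) = 2\theta_k = \frac{4 k \pi}{N_L} = \frac{2 k \pi}{N_{L-1}},
\]
which is precisely the $k$-th element of $\Theta_{L-1}$. This simultaneously shows that $\beta$ is well-defined (the image lies in $\Theta_{L-1}$) and surjective (every index in $\Theta_{L-1}$ is attained).

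Injectivity is immediate from the fact that doubling is injective on $\mathbb{R}$, but I would state it cleanly by observing that if $2\theta_k = 2\theta_{k'}$ with $k,k' \in \{1-\tfrac{N_{L-1}}{2},\ldots,\tfrac{N_{L-1}}{2}\}$, then $k = k'$. Combined with surjectivity this gives the claimed bijection. There is no real obstacle here; the only care needed is bookkeeping on the half-open interval $(-\tfrac{\pi}{2},\tfrac{\pi}{2}]$ so that the endpoint $\theta_{N_L/4} = \tfrac{\pi}{2}$ is included on the low side and matches the endpoint $\theta_{N_{L-1}/2} = \pi$ on the coarse side.
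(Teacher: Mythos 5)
Your proof is correct. The paper refers the proof of Lemma~\ref{chap4_lemma14} to \cite{Gander:2014:AOA} rather than including it, but your direct index-chasing argument is exactly the natural one: rewriting $\theta_k\in(-\tfrac{\pi}{2},\tfrac{\pi}{2}]$ as $-\tfrac{N_L}{4}<k\leq\tfrac{N_L}{4}$, using $N_L=2N_{L-1}$ to identify the index set with that of $\Theta_{L-1}$, observing $2\theta_k=\tfrac{2k\pi}{N_{L-1}}$, and noting injectivity of doubling. Your remark about the endpoint $\theta_{N_L/4}=\tfrac{\pi}{2}$ mapping to $\theta_{N_{L-1}/2}=\pi$ correctly handles the half-open interval bookkeeping.
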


\section{Local Fourier mode analysis}\label{FourierSec}

For simplicity we assume that $\Omega = (0,1)$ is a one-dimensional
domain, which is divided into uniform elements with mesh size $h$.
The analysis for higher dimensions is more technical, but the tools
stay the same as for the one dimensional case.  The standard one
dimensional mass and stiffness matrices are
\begin{align*}
  M_h = \frac{h}{6}
\begin{pmatrix}
  4 & 1 &     & \\
  1 & 4 & \ddots &   \\
  &\ddots & \ddots &1 \\
   &   & 1 & 4
 \end{pmatrix}, \qquad   K_h = \frac{1}{h}
\begin{pmatrix}
  2 & -1 &   &   \\
  -1 & 2 & \ddots &   \\
  &\ddots & \ddots & -1\\
  & & -1 & 2 
 \end{pmatrix}.
\end{align*}

\subsection{Smoothing analysis}

The iteration matrix of damped block Jacobi is 
\begin{align*}
  \mathcal{S}_{\tau_L,h_L}^\nu = \left[I - \omega_t
    (D_{\tau_L,h_L})^{-1} \mathcal{L}_{\tau_L,h_L} \right]^\nu,
\end{align*}
where $D_{\tau_L,h_L}$ is a block diagonal matrix with blocks
$A_{\tau_L,h_L}$. We first use the exact inverse of the diagonal
matrix $D_{\tau_L,h_L}$ in our analysis, the V-cycle approximation is
studied later, see Remark \ref{V-cycleApproxSpace}. We denote by
$N_{L_t} \in \IN$ the number of time steps and by $N_{L_x} \in \IN$
the degrees of freedom in space for level $L \in \IN_0$. Using
Theorem \ref{chap4_theorem1}, we can prove
\begin{lemma}\label{chap4_lemma15}
  Let $\vec{u} = (\vec{u}_1,\vec{u}_2,\ldots,\vec{u}_{N_{L_t}})^\top \in \IR^{N_t
    N_{L_x} N_{L_t}}$ for $N_t, N_{L_x}, N_{L_t} \in \IN$, where we assume that
  $N_{L_x}$ and $N_{L_t}$ are even numbers, and assume that
  \[ \vec{u}_n \in \IR^{N_t N_{L_x}} \qquad \text{and} \qquad \vec{u}_{n,r} \in
  \IR^{N_t} \]
  for $n=1,\ldots,N_{L_t}$ and $r=1,\ldots,N_{L_x}$. Then the vector $\vec{u}$ can be
  written as
  \begin{align*}
    \vec{u} = \sum_{\theta_x \in \Theta_{L_x} }\sum_{\theta_t \in \Theta_{L_t}}
    \vec{\psi}^{L_x,L_t}(\theta_x,\theta_t)
  \end{align*}
  with the vectors
  \begin{align*}
    \vec{\psi}_{n,r}^{L_x,L_t}(\theta_x,\theta_t) := U
    \vec{\Phi}_{n,r}^{L_x,L_t}(\theta_x,\theta_t),\qquad
    \vec{\Phi}_{n,r}^{L_x,L_t}(\theta_x,\theta_t) :=
    \vec{\Phi}_{n}^{L_t}(\theta_t) \vec{\varphi}^{L_x}(\theta_x)[r]
  \end{align*}
   for $n=1,\ldots,N_{L_t}$, $r=1,\ldots,N_{L_x}$ and with the coefficient matrix
  \begin{align*}
    U := \mathrm{diag}\left( \hat{u}_{x,t}[1],\ldots,\hat{u}_{x,t}[N_t] \right)
    \in \IC^{N_t \times N_t}
  \end{align*}
  with the coefficients for $\theta_x \in \Theta_{L_x}$ and $\theta_t \in
  \Theta_{L_t}$
  \begin{align*}
    \hat{u}_{x,t}[\ell] := \frac{1}{N_{L_x}} \frac{1}{N_{L_t}} \sum_{r = 1}^{N_{L_x}}
    \sum_{n=1}^{N_{L_t}} \vec{u}_{n,r}[\ell] \vec{\varphi}(-\theta_x)[r]
    \vec{\varphi}(-\theta_t)[n].
  \end{align*}
\end{lemma}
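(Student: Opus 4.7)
The plan is to view the statement as a two–dimensional discrete Fourier transform: separable in the spatial index $r$ and the time–step index $n$, with the inner $N_t$ direction only contributing a fixed coefficient vector (not being transformed). I would therefore derive it by applying the already established one–dimensional results twice in succession.

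First I would fix an arbitrary spatial index $r \in \{1,\dots,N_{L_x}\}$ and consider the vector
\[
  \vec{u}^{(r)} := \bigl(\vec{u}_{1,r},\vec{u}_{2,r},\dots,\vec{u}_{N_{L_t},r}\bigr)^\top \in \IR^{N_{L_t} N_t}.
\]
Lemma \ref{chap4_lemma1} applies directly to this block vector (with $N_L = N_{L_t}$) and yields
\[
  \vec{u}^{(r)}_n \;=\; \sum_{\theta_t \in \Theta_{L_t}} U^{(r)}(\theta_t)\,\vec{\Phi}_n^{L_t}(\theta_t),
\]
with the diagonal coefficient matrix $U^{(r)}(\theta_t) = \mathrm{diag}\bigl(\hat u^{(r)}_{t}[1],\dots,\hat u^{(r)}_{t}[N_t]\bigr)$ whose entries are the standard temporal DFT coefficients of the $N_t$ scalar sequences $\bigl(\vec u_{n,r}[\ell]\bigr)_{n=1}^{N_{L_t}}$.

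Next, for each fixed $\theta_t$ and each fixed $\ell \in \{1,\dots,N_t\}$, I would apply Theorem \ref{chap4_theorem1} in the spatial direction to the scalar sequence $r \mapsto \hat u^{(r)}_{t}[\ell]$ of length $N_{L_x}$, obtaining
\[
  \hat u^{(r)}_{t}[\ell] \;=\; \sum_{\theta_x \in \Theta_{L_x}} \hat u_{x,t}[\ell]\,\vec\varphi^{L_x}(\theta_x)[r],
\]
where $\hat u_{x,t}[\ell]$ is precisely the doubly indexed coefficient in the statement of the lemma (by Fubini and commuting the two finite sums). Assembling these coefficients into the diagonal matrix $U = \mathrm{diag}(\hat u_{x,t}[1],\dots,\hat u_{x,t}[N_t])$ and substituting into the temporal expansion above gives
\[
  \vec{u}_{n,r} \;=\; \sum_{\theta_x \in \Theta_{L_x}} \sum_{\theta_t \in \Theta_{L_t}} U\,\vec{\Phi}_n^{L_t}(\theta_t)\,\vec\varphi^{L_x}(\theta_x)[r],
\]
which is exactly the claimed expansion once one recognizes the tensor product $\vec{\Phi}_{n,r}^{L_x,L_t}(\theta_x,\theta_t) = \vec{\Phi}_{n}^{L_t}(\theta_t)\,\vec\varphi^{L_x}(\theta_x)[r]$.

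The only real work is bookkeeping: one has to check that the diagonal structure of $U$ is preserved under the two successive transforms (which is immediate because the spatial DFT is applied componentwise in the index $\ell$ and does not mix the $N_t$ components), and that the normalizing factors combine to the product $\tfrac{1}{N_{L_x}}\tfrac{1}{N_{L_t}}$ stated in the coefficient formula. The main obstacle, such as it is, lies in keeping the three distinct indices $\ell, n, r$ and the two frequency variables $\theta_x,\theta_t$ separated in the notation; the mathematical content is just the separability of the two discrete Fourier transforms, and no new estimates or analytic arguments are needed beyond Theorem \ref{chap4_theorem1} and Lemma \ref{chap4_lemma1}.
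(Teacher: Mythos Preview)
Your proposal is correct and follows essentially the same route as the paper: first apply Lemma~\ref{chap4_lemma1} in the time direction for each fixed spatial index, then apply Theorem~\ref{chap4_theorem1} in the spatial direction, and combine via Fubini for finite sums. The only cosmetic difference is that the paper applies the spatial DFT directly to the original data $\vec{u}_{n,r}[\ell]$ (for fixed $n,\ell$) and substitutes that into the temporal coefficient formula, whereas you apply it to the temporal coefficients $\hat u^{(r)}_t[\ell]$; since both amount to interchanging two finite sums, the arguments are interchangeable.
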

\begin{proof}
  For $\vec{u} = (\vec{u}_1,\vec{u}_2,\ldots,\vec{u}_{N_{L_t}})^\top \in \IR^{N_t
    N_{L_x} N_{L_t}}$ we define for $s = 1,\ldots,N_{L_x}$ the vector
  $\vec{w}^s \in \IR^{N_{L_t} N_t}$ as $\vec{w}_n^s[\ell] :=
  \vec{u}_{n,s}[\ell]$. Applying Lemma \ref{chap4_lemma1} to the vector
  $\vec{w}^s$ results in
  \begin{align*}
    \vec{u}_{i,s}[\ell] = \vec{w}_i^s[\ell] = \sum_{\theta_t \in \Theta_{L_t}}
    \vec{\psi}^{L_t}(\theta_t) = \sum_{\theta_t \in \Theta_{L_t}}
    U_t[\ell,\ell] \vec{\varphi(\theta_t)}[i],
  \end{align*}
  with
  \begin{align*}
    U_t[\ell,\ell] = \hat{w}_t^s[\ell] = \frac{1}{N_L} \sum_{n=1}^{N_{L_t}}
    \vec{u}_{n,s}[\ell] \vec{\varphi}(-\theta_t)[n].
  \end{align*}
  Next, we define for a fixed $n \in \left\{
  1,\ldots,N_{L_t}\right\}$ and a fixed $\ell \in \left\{ 1,\ldots,N_t
  \right\}$ the vector $\vec{z}^{n,\ell} \in \IR^{N_{L_x}}$ as
  $\vec{z}^{n,\ell}[s] := u_{n,s}[\ell]$. Applying Theorem
  \ref{chap4_theorem1} to the vector $\vec{z}^{n,\ell}$,
  we get for $s = 1,\ldots,N_{L_x}$
  \begin{align*}
    u_{n,s}[\ell] = \vec{z}^{n,\ell}[s] = \sum_{\theta_x \in \Theta_{L_x}}
    \hat{z}_x^{n,\ell} \vec{\varphi}(\theta_x)[s],
  \quad\mbox{with}\quad
    \hat{z}_x^{n,\ell} = \frac{1}{N_{L_x}} \sum_{r = 1}^{N_{L_x}}
    \vec{u}_{n,r}[\ell]\vec{\varphi}(-\theta_x)[r].
  \end{align*}
  Combining the results above, we obtain the statement of this lemma with
  \begin{align*}
    \vec{u}_{i,s}[\ell] &= \sum_{\theta_x \in \Theta_{L_x}} \sum_{\theta_t \in
      \Theta_{L_t}} \vec{\varphi}(\theta_x)[s] \vec{\varphi(\theta_t)}[i]
    \frac{1}{N_{L_x}} \frac{1}{N_{L_t}} \sum_{r = 1}^{N_{L_x}}  \sum_{n=1}^{N_{L_t}}
    \vec{u}_{n,r}[\ell]\vec{\varphi}(-\theta_x)[r]  \vec{\varphi}(-\theta_t)[n]\\
    &= \sum_{\theta_x \in \Theta_{L_x}} \sum_{\theta_t \in
      \Theta_{L_t}} \hat{u}_{x,t}[\ell] \vec{\varphi}(\theta_x)[s]
    \vec{\varphi(\theta_t)}[i]\\
    &= \sum_{\theta_x \in \Theta_{L_x}} \sum_{\theta_t \in
      \Theta_{L_t}} U[\ell,\ell]
    \vec{\Phi}_{i,s}^{L_x,L_t}(\theta_x,\theta_t)[\ell]\\
    &= \sum_{\theta_x \in \Theta_{L_x}} \sum_{\theta_t \in
      \Theta_{L_t}} \vec{\psi}_{i,s}^{L_x,L_t}(\theta_x,\theta_t)[\ell].
  \end{align*}
\end{proof}
\begin{definition}[Fourier space]
  For $N_t, N_{L_x}, N_{L_t} \in \IN$ and the frequency  $\theta_x \in
  \Theta_{L_x}$ and $\theta_t \in \Theta_{L_t}$, let the vector
  $\vec{\Phi}^{L_x,L_t}(\theta_x,\theta_t) \in \IC^{N_t N_{L_x} N_{L_t}}$ be
  as in Lemma \ref{chap4_lemma15}. Then we define the linear space of
  Fourier modes with frequencies $(\theta_x, \theta_t)$ as
  \begin{align*}
    \Psi_{L_x,L_t}(\theta_x,\theta_t) &:= \mathrm{span} \left\{
      \vec{\Phi}^{L_x,L_t}(\theta_x,\theta_t) \right\}\\
    &\phantom{:}= \big\{ \vec{\psi}^{L_x,L_t}(\theta_x,\theta_t) \in \IC^{N_t
    N_{L_x} N_{L_t}} : \vec{\psi}_{n,r}^{L_x,L_t}(\theta_x,\theta_t) := U
    \vec{\Phi}_{n,r}^{L_x,L_t}(\theta_x,\theta_t), \\
    &\qquad \qquad \qquad \qquad \quad n = 1,\ldots,N_{L_t}, r =
    1,\ldots,N_{L_x} \text{ and } U \in \IC^{N_t \times N_t} \big\}.
  \end{align*}
\end{definition}
\begin{lemma}[Shifting equality]\label{chap4_lemma16}
  For $N_t, N_{L_x}, N_{L_t} \in \IN$ and the frequencies  $\theta_x \in
  \Theta_{L_x}$, $\theta_t \in \Theta_{L_t}$ let
  $\vec{\psi}^{L_x,L_t}(\theta_x,\theta_t) \in
  \Psi_{L_x,L_t}(\theta_x,\theta_t)$. Then we have the shifting equalities
  \begin{align*}
    \vec{\psi}_{{n-1},r}^{L_x,L_t}(\theta_x,\theta_t) = e^{-\i \theta_t}
    \vec{\psi}_{n,r}^{L_x,L_t}(\theta_x,\theta_t),\quad
        \vec{\psi}_{n,r-1}^{L_x,L_t}(\theta_x,\theta_t) = e^{-\i \theta_x}
    \vec{\psi}_{n,r}^{L_x,L_t}(\theta_x,\theta_t)
  \end{align*}
  for $n = 2,\ldots,N_{L_t}$ and $r = 2,\ldots,N_{L_x}$.
\end{lemma}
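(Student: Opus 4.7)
The plan is a direct computation unwrapping the definition of $\Psi_{L_x,L_t}(\theta_x,\theta_t)$. By the definition following Lemma \ref{chap4_lemma15}, any $\vec{\psi}^{L_x,L_t}(\theta_x,\theta_t)$ in this space has the form $\vec{\psi}_{n,r}^{L_x,L_t}(\theta_x,\theta_t) = U\,\vec{\Phi}_{n,r}^{L_x,L_t}(\theta_x,\theta_t)$ with $\vec{\Phi}_{n,r}^{L_x,L_t}(\theta_x,\theta_t) = \vec{\Phi}_n^{L_t}(\theta_t)\,\vec{\varphi}^{L_x}(\theta_x)[r]$, where the coefficient matrix $U \in \IC^{N_t \times N_t}$ is independent of both indices $n$ and $r$. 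This independence is the structural fact that makes the shift act as a scalar multiplication.

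The key observation is the defining identity of the complex exponential from Theorem \ref{chap4_theorem1}, namely $\vec{\varphi}(\theta)[\ell] = e^{\i \ell \theta}$, which immediately yields the scalar shift identities
\[
\vec{\varphi}(\theta_t)[n-1] = e^{-\i\theta_t}\,\vec{\varphi}(\theta_t)[n], \qquad \vec{\varphi}^{L_x}(\theta_x)[r-1] = e^{-\i\theta_x}\,\vec{\varphi}^{L_x}(\theta_x)[r].
\]
Since $\vec{\Phi}_n^{L_t}(\theta_t)[\ell] = \vec{\varphi}(\theta_t)[n]$ for every $\ell = 1,\ldots,N_t$, the first identity lifts componentwise to $\vec{\Phi}_{n-1}^{L_t}(\theta_t) = e^{-\i\theta_t}\,\vec{\Phi}_n^{L_t}(\theta_t)$.

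With these preparatory identities in hand, I would simply substitute and factor the scalar shift through $U$:
\[
\vec{\psi}_{n-1,r}^{L_x,L_t}(\theta_x,\theta_t) = U\,\vec{\Phi}_{n-1}^{L_t}(\theta_t)\,\vec{\varphi}^{L_x}(\theta_x)[r] = e^{-\i\theta_t}\,U\,\vec{\Phi}_n^{L_t}(\theta_t)\,\vec{\varphi}^{L_x}(\theta_x)[r] = e^{-\i\theta_t}\,\vec{\psi}_{n,r}^{L_x,L_t}(\theta_x,\theta_t),
\]
and the analogous argument with the spatial shift identity gives the second equality. The assumptions $n \geq 2$ and $r \geq 2$ are needed only to ensure that the shifted indices stay within their valid ranges $\{1,\ldots,N_{L_t}\}$ and $\{1,\ldots,N_{L_x}\}$.

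There is no real obstacle here; the proof is routine once the definition is opened up. The only subtlety worth emphasizing is that $U$ is a single coefficient matrix common to all index pairs $(n,r)$, so the scalar shift factor $e^{-\i\theta_t}$ (respectively $e^{-\i\theta_x}$) commutes trivially with $U$ and passes outside. This is precisely the property that makes $\Psi_{L_x,L_t}(\theta_x,\theta_t)$ an invariant subspace for translation-invariant operators, which is the reason the space is set up this way for the Fourier analysis to follow.
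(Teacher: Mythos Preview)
Your proof is correct and follows essentially the same approach as the paper. The paper's proof is even terser: it simply records the scalar identity $\vec{\varphi}(\theta)[n-1] = e^{\i(n-1)\theta} = e^{-\i\theta}e^{\i n\theta} = e^{-\i\theta}\vec{\varphi}(\theta)[n]$ and remarks that it applies to both the space and time frequencies, leaving the routine lift through $U$ and $\vec{\Phi}$ implicit.
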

\begin{proof}
  The result follows from the fact that
  \[  \vec{\varphi(\theta)}[n-1]= e^{\i (n-1) \theta} = e^{- \i \theta} e^{\i n
    \theta} =  e^{- \i \theta} \vec{\varphi(\theta)}[n],\]
  which can be applied for the frequencies in space $\theta_x \in \Theta_{L_x}$
  and the frequencies in time $\theta_t \in \Theta_{L_t}.$
\end{proof}
\begin{lemma}[Fourier symbol of $\mathcal{L}_{\tau_L,h_L}$]\label{chap4_lemma17}
  For the frequencies $\theta_x \in \Theta_{L_x}$ and $\theta_t \in \Theta_{L_t}$
  we consider the vector $\vec{\psi}^{L_x,L_t}(\theta_x,\theta_t) \in
  \Psi_{L_x,L_t}(\theta_x,\theta_t)$. Then for $n = 2,\ldots,N_{L-t}$ and $r =
  2,\ldots,N_{L_x}-1$ we have
  \begin{align*}
    \left( \mathcal{L}_{\tau_L,h_L}  \vec{\psi}^{L_x,L_t}(\theta_x,\theta_t)
    \right)_{n,r} =  \hat{\mathcal{L}}_{\tau_L,h_L}(\theta_x, \theta_t)
    \vec{\psi}_{n,r}^{L_x,L_t}(\theta_x,\theta_t),
  \end{align*}
  where the Fourier symbol is given by
  \begin{align*}
    \hat{\mathcal{L}}_{\tau_L,h_L}(\theta_x, \theta_t) := \frac{h_L}{3} \left(2 +
      \cos(\theta_x)\right) \left[ K_{\tau_L} + h_L^{-2} \beta(\theta_x)
      M_{\tau_L} - e^{-\i \theta_t} N_{\tau_L}\right] \in
      \IC^{N_t\times N_t},
  \end{align*}
  with the function
  $\beta(\theta_x) := 6\; \frac{1 - \cos(\theta_x)}{2 + \cos(\theta_x)} \in
    [0,12]$.
\end{lemma}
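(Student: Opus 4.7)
The plan is to compute $(\mathcal{L}_{\tau_L,h_L}\vec{\psi})_{n,r}$ block by block using the lower bidiagonal structure of $\mathcal{L}_{\tau_L,h_L}$ from (\ref{chap4_generalLinearSystem}), then reduce the spatial tridiagonal actions and the temporal subdiagonal action to scalar factors via the shifting equalities of Lemma~\ref{chap4_lemma16}. Restricting to $n\ge 2$ and $2\le r\le N_{L_x}-1$ avoids the initial-time block (no $B_{\tau_L,h_L}$ contribution from $n=0$) and the spatial boundary, so every required shift is available.

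First I would unfold the block at time row $n$ as
\begin{equation*}
 (\mathcal{L}_{\tau_L,h_L}\vec{\psi})_{n,r}
 = (A_{\tau_L,h_L}\vec{\psi}_n)_r + (B_{\tau_L,h_L}\vec{\psi}_{n-1})_r,
\end{equation*}
with $A_{\tau_L,h_L}=K_{\tau_L}\otimes M_{h_L}+M_{\tau_L}\otimes K_{h_L}$ and $B_{\tau_L,h_L}=-N_{\tau_L}\otimes M_{h_L}$. Because of the Kronecker product structure, applying $M_{h_L}$ and $K_{h_L}$ only touches the spatial index $r$, producing the standard three-point stencils, while $K_{\tau_L}$, $M_{\tau_L}$, and $N_{\tau_L}$ act as $N_t\times N_t$ matrices on the ``$\ell$-slot'' of $\vec{\psi}_{n,r}\in\IC^{N_t}$.

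Next I would invoke the spatial shifting equality $\vec{\psi}_{n,r\pm 1}=e^{\pm\i\theta_x}\vec{\psi}_{n,r}$ to evaluate the mass and stiffness stencils:
\begin{equation*}
 (M_{h_L}\vec{\psi}_n)_r = \frac{h_L}{6}\bigl(e^{-\i\theta_x}+4+e^{\i\theta_x}\bigr)\vec{\psi}_{n,r}
 = \frac{h_L}{3}\bigl(2+\cos\theta_x\bigr)\vec{\psi}_{n,r},
\end{equation*}
and similarly $(K_{h_L}\vec{\psi}_n)_r = \frac{2}{h_L}(1-\cos\theta_x)\vec{\psi}_{n,r}$. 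Substituting these into the diagonal-block contribution gives
\begin{equation*}
 (A_{\tau_L,h_L}\vec{\psi}_n)_r
 = \left[\frac{h_L}{3}(2+\cos\theta_x)\,K_{\tau_L} + \frac{2}{h_L}(1-\cos\theta_x)\,M_{\tau_L}\right]\vec{\psi}_{n,r}.
\end{equation*}
For the off-diagonal block I use the temporal shifting equality $\vec{\psi}_{n-1,r}=e^{-\i\theta_t}\vec{\psi}_{n,r}$ together with the same spatial calculation for $M_{h_L}$, obtaining
\begin{equation*}
 (B_{\tau_L,h_L}\vec{\psi}_{n-1})_r
 = -\frac{h_L}{3}(2+\cos\theta_x)\,e^{-\i\theta_t}\,N_{\tau_L}\,\vec{\psi}_{n,r}.
\end{equation*}

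Finally I would factor out the common scalar $\frac{h_L}{3}(2+\cos\theta_x)$ and recognize the remaining coefficient of $M_{\tau_L}$ as $h_L^{-2}\beta(\theta_x)$ with $\beta(\theta_x)=6\,(1-\cos\theta_x)/(2+\cos\theta_x)$; the bound $\beta(\theta_x)\in[0,12]$ follows by noting that this function is monotone increasing in $1-\cos\theta_x\in[0,2]$. Combining the two contributions yields the claimed Fourier symbol. The only real point requiring care is the bookkeeping in the tensor product: verifying that the spatial operators really act only on the $r$-index (so the shift scalars can be pulled out independently of the $N_t\times N_t$ time matrices), which is clean once the index ranges $n\ge 2$, $2\le r\le N_{L_x}-1$ exclude the boundary rows where the stencils would be truncated.
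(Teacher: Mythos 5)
Your proposal is correct and follows essentially the same route as the paper's proof: split $(\mathcal{L}_{\tau_L,h_L}\vec{\psi})_{n,r}$ into the $A_{\tau_L,h_L}$ and $B_{\tau_L,h_L}$ blocks, evaluate the three-point spatial stencils of $M_{h_L}$ and $K_{h_L}$ via the spatial shifting equality and the temporal subdiagonal via $\vec{\psi}_{n-1,r}=e^{-\i\theta_t}\vec{\psi}_{n,r}$, then factor out $\frac{h_L}{3}(2+\cos\theta_x)$ and identify $\beta(\theta_x)$. The only stylistic difference is that the paper first reduces the temporal shift to obtain $\left(e^{-\i\theta_t}B_{\tau_L,h_L}+A_{\tau_L,h_L}\right)\vec{\psi}_n$ before treating the spatial stencils, whereas you handle both blocks in parallel; this is immaterial.
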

\begin{proof}
  Let  $\vec{\psi}^{L_x,L_t}(\theta_x,\theta_t) \in
  \Psi_{L_x,L_t}(\theta_x,\theta_t)$. Then we have for $n = 2,\ldots,N_{L_t}$
  and using Lemma \ref{chap4_lemma16}
  \begin{align*}
    \left( \mathcal{L}_{\tau_L,h_L}  \vec{\psi}^{L_x,L_t}(\theta_x,\theta_t)
    \right)_{n} &=  B_{\tau_L,h_L} \vec{\psi}_{n-1}^{L_x,L_t}(\theta_x,\theta_t) +
    A_{\tau_L,h_L} \vec{\psi}_{n}^{L_x,L_t}(\theta_x,\theta_t)\\
    &= \left( e^{-\i \theta_t} B_{\tau_L,h_L} + A_{\tau_L,h_L} \right)
    \vec{\psi}_{n}^{L_x,L_t}(\theta_x,\theta_t).
  \end{align*}
  Hence, we have to study the action of $A_{\tau,h}$ and $B_{\tau,h}$ on the
  local vector $\vec{\psi}_{n}^{L_x,L_t}(\theta_x,\theta_t)$. By using the
  definition of $B_{\tau,h}$, we obtain for $r = 2,\ldots,N_{L_x}-1$ and $\ell =
  1,\ldots,N_t$ using Lemma \ref{chap4_lemma16}
  \begin{align*}
     &\left( B_{\tau_L,h_L} \vec{\psi}_n^{L_x,L_t}(\theta_x,\theta_t)
    \right)_{r}[\ell] = - \sum_{s=1}^{N_{L_x}} \sum_{k=1}^{N_t} M_{h_L}[r,s]
    N_{\tau_L}[\ell,k] \vec{\psi}_{n,s}^{L_x,L_t}(\theta_x,\theta_t)[k]\\
    &\qquad= -  \sum_{k=1}^{N_t} \frac{h_L}{6} \left(
      \vec{\psi}_{n,r-1}^{L_x,L_t}(\theta_x,\theta_t)[k] +
      4 \vec{\psi}_{n,r}^{L_x,L_t}(\theta_x,\theta_t)[k] +
      \vec{\psi}_{n,r+1}^{L_x,L_t}(\theta_x,\theta_t)[k] \right)
    N_{\tau_L}[\ell,k]\\
    &\qquad= -\frac{h_L}{6}  \sum_{k=1}^{N_t} N_{\tau_L}[\ell,k] \left( e^{-\i
        \theta_x} + 4 + e^{\i \theta_x} \right)
    \vec{\psi}_{n,r}^{L_x,L_t}(\theta_x,\theta_t)[k]\\
    &\qquad= -\frac{h_L}{3} \left(2 + \cos(\theta_x)\right) \sum_{k=1}^{N_t}
    N_{\tau_L}[\ell,k]\vec{\psi}_{n,r}^{L_x,L_t}(\theta_x,\theta_t)[k]\\
    &\qquad= -\frac{h_L}{3} \left(2 + \cos(\theta_x)\right)  \left(
      N_{\tau_L}\vec{\psi}_{n,r}^{L_x,L_t}(\theta_x,\theta_t) \right)[\ell].
  \end{align*}
  Next we study the action of the matrix $A_{\tau,h}$ on the local vector
  $\vec{\psi}_{n}^{L_x,L_t}(\theta_x,\theta_t)$:
  \begin{align*}
    &\left( A_{\tau_L,h_L} \vec{\psi}_n^{L_x,L_t}(\theta_x,\theta_t)
    \right)_{r}[\ell] =  \sum_{s=1}^{N_{L_x}} \sum_{k=1}^{N_t}  M_{h_L}[r,s]
    K_{\tau_L}[\ell,k] \vec{\psi}_{n,s}^{L_x,L_t}(\theta_x,\theta_t)[k]\\
    &\qquad\qquad\qquad\qquad\qquad\qquad+ \sum_{s=1}^{N_{L_x}} \sum_{k=1}^{N_t}
    K_{h_L}[r,s] M_{\tau_L}[\ell,k]
    \vec{\psi}_{n,s}^{L_x,L_t}(\theta_x,\theta_t)[k]\\
    &\quad= \frac{h_L}{3} \left( 2 + \cos(\theta_x) \right)\sum_{k=1}^{N_t}
    K_{\tau_L}[\ell,k] \vec{\psi}_{n,s}^{L_x,L_t}(\theta_x,\theta_t)[k]\\
    &\quad\quad+\sum_{k=1}^{N_t} \frac{1}{h_L} \left(
      -\vec{\psi}_{n,r-1}^{L_x,L_t}(\theta_x,\theta_t)[k] +
      2 \vec{\psi}_{n,r}^{L_x,L_t}(\theta_x,\theta_t)[k] -
      \vec{\psi}_{n,r+1}^{L_x,L_t}(\theta_x,\theta_t)[k] \right)
    M_{\tau_L}[\ell,k]\\
    &\quad=  \frac{h_L}{3} \left( 2 + \cos(\theta_x) \right) \left( K_{\tau_L}
      \vec{\psi}_{n,r}^{L_x,L_t}(\theta_x,\theta_t) \right)[\ell]\\
    &\quad\quad+\frac{2}{h_L} (1 - \cos(\theta_x)) \sum_{k=1}^{N_t}
    M_{\tau_L}[\ell,k] \vec{\psi}_{n,r}^{L_x,L_t}(\theta_x,\theta_t)[k]\\
    &\quad=\left(\left[ \frac{h_L}{3} \left( 2 + \cos(\theta_x) \right)
        K_{\tau_L} + \frac{2}{h_L} (1 - \cos(\theta_x))  M_{\tau_L} \right]
    \vec{\psi}_{n,r}^{L_x,L_t}(\theta_x,\theta_t) \right)[\ell],
  \end{align*}
  where we used Lemma \ref{chap4_lemma16}. Hence we conclude the proof with
  \begin{align*}
    &\left( \mathcal{L}_{\tau_L,h_L}  \vec{\psi}^{L_x,L_t}(\theta_x,\theta_t)
    \right)_{n,r} =  \frac{h_L}{3} \left( 2 + \cos(\theta_x) \right) \left(
      K_{\tau_L} - e^{-\i \theta_t} N_{\tau_L} \right)
    \vec{\psi}_{n,r}^{L_x,L_t}(\theta_x,\theta_t)\\
    &\qquad\qquad\qquad\qquad\qquad\quad+ \frac{2}{h_L} (1 - \cos(\theta_x))
    M_{\tau_L} \vec{\psi}_{n,r}^{L_x,L_t}(\theta_x,\theta_t)\\
    &\qquad\quad= \frac{h_L}{3} \left( 2 + \cos(\theta_x) \right) \left(
      K_{\tau_L} + 6 h_L^{-2} \frac{1 - \cos(\theta_x)}{2 + \cos(\theta_x)}
      M_{\tau_L} - e^{-\i \theta_t} N_{\tau_L} \right)
    \vec{\psi}_{n,r}^{L_x,L_t}(\theta_x,\theta_t)\\
    &\qquad\quad= \frac{h_L}{3} \left( 2 + \cos(\theta_x) \right) \left(
      K_{\tau_L} + h_L^{-2} \beta(\theta_x) M_{\tau_L} - e^{-\i \theta_t} N_{\tau_L}
    \right) \vec{\psi}_{n,r}^{L_x,L_t}(\theta_x,\theta_t).
  \end{align*}
\end{proof}

\noindent If we assume periodic boundary conditions in space-time, i.e.
\begin{equation}\label{chap4_periodicBCST}
\begin{aligned}
  u(t,0) &= u(t,1) &\qquad \text{for } t &\in (0,T),\\
  u(0,x) &= u(T,x) &\qquad \text{for } x &\in \Omega = (0,1),
\end{aligned}
\end{equation}
we obtain from Lemma \ref{chap4_lemma17} the mapping property
\begin{equation}\label{chap4_mappingProps1}
\begin{aligned}
  \mathcal{L}_{\tau_L,h_L}: \Psi_{L_x,L_t}(\theta_x,\theta_t) &\rightarrow
  \Psi_{L_x,L_t}(\theta_x,\theta_t),\\
  U &\mapsto \hat{\mathcal{L}}_{\tau_L,h_L}(\theta_x,
  \theta_t) U.
\end{aligned}
\end{equation}
\begin{lemma}[Mapping property of $\mathcal{S}_{\tau_L,h_L}^\nu$]
\label{chap4_lemma18}
    For the frequencies $\theta_x \in \Theta_{L_x}$ and $\theta_t \in
    \Theta_{L_t}$ we consider the vector
    $\vec{\psi}^{L_x,L_t}(\theta_x,\theta_t) \in
    \Psi_{L_x,L_t}(\theta_x,\theta_t)$. Then under the assumption of
    periodic boundary conditions (\ref{chap4_periodicBCST}), we have
    for $n = 1,\ldots,N_{L_t}$ and $r = 1,\ldots,N_{L_x}$
  \begin{align*}
    \left( \mathcal{S}_{\tau_L,h_L}^\nu  \vec{\psi}^{L_x,L_t}(\theta_x,\theta_t)
    \right)_{n,r} =  \left[\hat{\mathcal{S}}_{\tau_L,h_L}( \theta_x, \theta_t)\right]^\nu
    \vec{\psi}_{n,r}^{L_x,L_t}(\theta_x,\theta_t),
  \end{align*}
   where the Fourier symbol is given by
  \begin{align*}
    \hat{\mathcal{S}}_{\tau_L,h_L}(\theta_x, \theta_t) := (1-\omega_t) I_{N_t}
    + \omega_t e^{-\i \theta_t} \left( K_{\tau_L} + h_L^{-2} \beta(\theta_x)
      M_{\tau_L} \right)^{-1} N_{\tau_L} \in \IC^{N_t \times N_t},
  \end{align*}
  with the function $\beta(\theta_x)$ as defined in Lemma \ref{chap4_lemma17}.
\end{lemma}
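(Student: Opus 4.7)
The plan is to reduce the lemma to a direct application of Lemma~\ref{chap4_lemma17}, together with a parallel computation for the block diagonal $D_{\tau_L,h_L}$, and then combine the two symbols into the symbol of one smoothing step before iterating.

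First, I would establish the Fourier symbol of $D_{\tau_L,h_L}$ by repeating the argument in the proof of Lemma~\ref{chap4_lemma17}, but with the $B_{\tau_L,h_L}$ contribution removed. Since $D_{\tau_L,h_L}$ is block diagonal with blocks $A_{\tau_L,h_L} = M_{h_L}\otimes K_{\tau_L} + K_{h_L}\otimes M_{\tau_L}$, there is no coupling in time, so the symbol will depend only on $\theta_x$. The same cosine manipulations that produced the factor $\tfrac{h_L}{3}(2+\cos\theta_x)$ and the ratio $\beta(\theta_x)$ in Lemma~\ref{chap4_lemma17} give
\begin{equation*}
  \hat{D}_{\tau_L,h_L}(\theta_x) = \tfrac{h_L}{3}\bigl(2+\cos(\theta_x)\bigr)\bigl[K_{\tau_L} + h_L^{-2}\beta(\theta_x)M_{\tau_L}\bigr],
\end{equation*}
and $D_{\tau_L,h_L}$ therefore maps $\Psi_{L_x,L_t}(\theta_x,\theta_t)$ into itself with this symbol (acting on the coefficient matrix $U$). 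Here I would use periodic boundary conditions in space to extend the interior identity to $r=1$ and $r=N_{L_x}$, just as I will use periodicity in both directions for $\mathcal{L}_{\tau_L,h_L}$ to extend~(\ref{chap4_mappingProps1}) to the boundary indices.

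Combining the two mapping properties, the symbol of one smoothing step on $\Psi_{L_x,L_t}(\theta_x,\theta_t)$ is
\begin{equation*}
  \hat{\mathcal{S}}_{\tau_L,h_L}(\theta_x,\theta_t) = I_{N_t} - \omega_t\,\hat{D}_{\tau_L,h_L}(\theta_x)^{-1}\hat{\mathcal{L}}_{\tau_L,h_L}(\theta_x,\theta_t).
\end{equation*}
The prefactor $\tfrac{h_L}{3}(2+\cos\theta_x)$ cancels between $\hat{D}^{-1}$ and $\hat{\mathcal{L}}$, and the bracket $[K_{\tau_L}+h_L^{-2}\beta(\theta_x)M_{\tau_L}]$ inside $\hat{\mathcal{L}}$ cancels against its inverse from $\hat{D}^{-1}$, leaving exactly
\begin{equation*}
  \hat{\mathcal{S}}_{\tau_L,h_L}(\theta_x,\theta_t) = (1-\omega_t)I_{N_t} + \omega_t e^{-\i\theta_t}\bigl(K_{\tau_L}+h_L^{-2}\beta(\theta_x)M_{\tau_L}\bigr)^{-1}N_{\tau_L},
\end{equation*}
which is the claimed symbol.

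Finally, since $\mathcal{S}_{\tau_L,h_L}$ preserves the Fourier space $\Psi_{L_x,L_t}(\theta_x,\theta_t)$, one simply iterates the single-step identity $\nu$ times to obtain $(\mathcal{S}_{\tau_L,h_L}^\nu\vec{\psi}^{L_x,L_t})_{n,r} = [\hat{\mathcal{S}}_{\tau_L,h_L}(\theta_x,\theta_t)]^\nu\vec{\psi}_{n,r}^{L_x,L_t}$. The only real subtlety I anticipate is the role of the periodic boundary conditions~(\ref{chap4_periodicBCST}): Lemma~\ref{chap4_lemma17} only records the identity for interior indices $n\ge 2$, $2\le r\le N_{L_x}-1$, and it is periodicity in space and in time that replaces the missing wrap-around entries of $M_{h_L}$, $K_{h_L}$ and $B_{\tau_L,h_L}$ by their Fourier shifts $e^{\pm\i\theta_x}$ and $e^{-\i\theta_t}$, so that the mapping property and hence the symbol formula hold for every $n$ and $r$. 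The rest is routine algebra with no further obstacles.
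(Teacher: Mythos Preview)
Your proposal is correct and follows essentially the same route as the paper: compute the Fourier symbol of $D_{\tau_L,h_L}$ (the paper calls it $\hat{A}_{\tau_L,h_L}(\theta_x)$), combine it with the symbol of $\mathcal{L}_{\tau_L,h_L}$ from Lemma~\ref{chap4_lemma17} to obtain $\hat{\mathcal{S}}_{\tau_L,h_L}=I_{N_t}-\omega_t\hat{A}^{-1}\hat{\mathcal{L}}$, simplify via the cancellation you describe, and then iterate (the paper phrases this last step as ``by induction''). Your remark on periodic boundary conditions being needed to extend the interior identity to all indices is exactly the role of assumption~(\ref{chap4_periodicBCST}) in the statement.
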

\begin{proof}
   Let  $\vec{\psi}^{L_x,L_t}(\theta_x,\theta_t) \in
  \Psi_{L_x,L_t}(\theta_x,\theta_t)$, then for a fixed $n = 1,\ldots,N_{L_t}$
  and a fixed $r = 1,\ldots,N_{L_x}$ we have that
  \begin{align*}
     \left( \mathcal{S}_{\tau_L,h_L}^1  \vec{\psi}^{L_x,L_t}(\theta_x,\theta_t)
    \right)_{n,r} &= \left(\left( I_{N_t N_{L_x} N_{L_t}} - \omega_t
        (D_{\tau_L,h_L})^{-1} \mathcal{L}_{\tau_L,h_L} \right)
      \vec{\psi}^{L_x,L_t}(\theta_x,\theta_t) \right)_{n,r}\\
  &= \left( I_{N_t} - \omega_t \left( \hat{A}_{\tau_L,h_L}(\theta_x)
    \right)^{-1} \hat{\mathcal{L}}_{\tau_L,h_L}(\theta_x, \theta_t)
  \right)\vec{\psi}_{n,r}^{L_x,L_t}(\theta_x,\theta_t)\\
  &=: \hat{\mathcal{S}}_{\tau_L,h_L}( \theta_x, \theta_t)
  \vec{\psi}_{n,r}^{L_x,L_t}(\theta_x,\theta_t)
  \end{align*}
   with
  \begin{align*}
    \hat{A}_{\tau_L,h_L}(\theta_x) :&=  \frac{h_L}{3} \left( 2 + \cos(\theta_x)
    \right) K_{\tau_L} + \frac{2}{h_L} (1 - \cos(\theta_x)) M_{\tau_L}\\
    &= \frac{h_L}{3} \left( 2 + \cos(\theta_x)
    \right) \left[ K_{\tau_L} + h_L^{-2} \beta(\theta_x) M_{\tau_L} \right].
  \end{align*}
  Further calculations give
  \begin{align*}
    \left( \hat{A}_{\tau_L,h_L}(\theta_x)
    \right)^{-1} \hat{\mathcal{L}}_{\tau_L,h_L}(\theta_x, \theta_t) &= \left(
      \hat{A}_{\tau_L,h_L}(\theta_x)\right)^{-1} \left[
      K_{\tau_L} + h_L^{-2} \beta(\theta_x) M_{\tau_L} - e^{-\i \theta_t}
      N_{\tau_L} \right]\\
    &= I_{N_t} - e^{-\i \theta_t} \left[
      K_{\tau_L} + h_L^{-2} \beta(\theta_x) M_{\tau_L} \right]^{-1} N_{\tau_L}.
  \end{align*}
  Hence we have
  \begin{align*}
    \hat{\mathcal{S}}_{\tau_L,h_L}( \theta_x, \theta_t) &=  I_{N_t} - \omega_t \left(
    I_{N_t} -  e^{-\i \theta_t} \left(
      K_{\tau_L} + h_L^{-2} \beta(\theta_x) M_{\tau_L} \right)^{-1} N_{\tau_L}
  \right)\\
  &= (1 - \omega_t) I_{N_t} + \omega_t e^{-\i \theta_t} \left( K_{\tau_L} +
    h_L^{-2} \beta(\theta_x) M_{\tau_L} \right)^{-1} N_{\tau_L}.
  \end{align*}
  By induction this completes the proof.
\end{proof}

In view of Lemma \ref{chap4_lemma18}, the following mapping property holds
when periodic boundary conditions are assumed:
\begin{equation}\label{chap4_mappingProps2}
\begin{aligned}
  \mathcal{S}_{\tau_L,h_L}^\nu: \Psi_{L_x,L_t}(\theta_x,\theta_t) &\rightarrow
  \Psi_{L_x,L_t}(\theta_x,\theta_t),\\
  U &\mapsto (\hat{\mathcal{S}}_{\tau_L,h_L}( \theta_x, \theta_t))^\nu U.
\end{aligned}
\end{equation}

Next we will analyze the smoothing behavior for the high
frequencies. To do so, we consider two coarsening strategies: semi
coarsening in time, and full space-time coarsening. 
\begin{definition}[High and low frequency ranges]
  Let $N_{L_t}, N_{L_x} \in \IN$. We define the set of frequencies
  \begin{align*}
    \Theta_{L_x,L_t} := \left\{
      (\frac{2k\pi}{N_{L_x}},\frac{2\ell\pi}{N_{L_t}}): k =
      1-\frac{N_{L_x}}{2},\ldots,\frac{N_{L_x}}{2} \text{ and } \ell =
      1-\frac{N_{L_t}}{2},\ldots,\frac{N_{L_t}}{2} \right\} \subset (-\pi,\pi]^2,
  \end{align*}
  and the sets of low and high frequencies with respect to semi coarsening
  in time,
  \begin{align*}
  \Theta_{L_x,L_t}^{\mathrm{low},\mathrm{s}} := \Theta_{L_x,L_t} \cap
  (-\pi,\pi]\times(-\frac{\pi}{2},\frac{\pi}{2}],\quad
  \Theta_{L_x,L_t}^{\mathrm{high},\mathrm{s}} := \Theta_{L_x,L_t} \setminus
  \Theta_{L_x,L_t}^{\mathrm{low},\mathrm{s}},
  \end{align*}
  and full space-time coarsening 
  \begin{align*}
    \Theta_{L_x,L_t}^{\mathrm{low},\mathrm{f}} := \Theta_{L_x,L_t} \cap
  (-\frac{\pi}{2},\frac{\pi}{2}]^2,\quad
  \Theta_{L_x,L_t}^{\mathrm{high},\mathrm{f}} := \Theta_{L_x,L_t} \setminus
  \Theta_{L_x,L_t}^{\mathrm{low},\mathrm{f}}.
  \end{align*}
\end{definition}
\begin{figure}
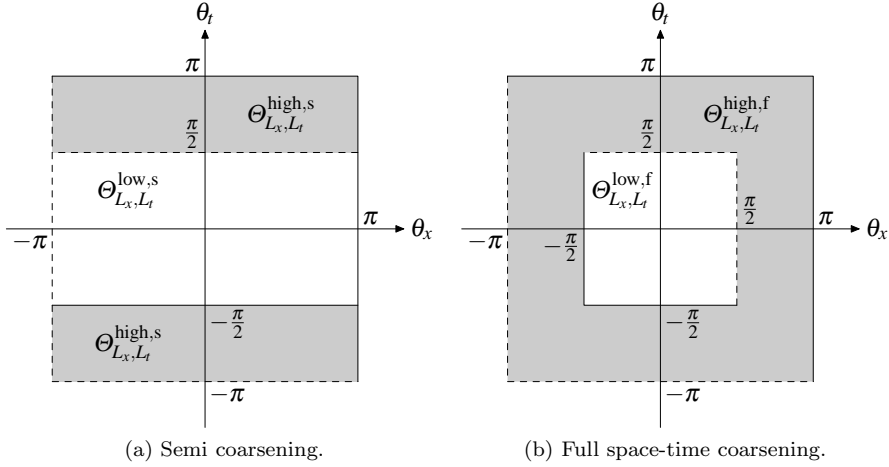

   \centering
    \subfloat[Semi coarsening.]
    {
      \scalebox{0.8}{\includegraphics{./figures/LowAndHighFrequencies.1}}
      \label{chap4figureLowAndHighFrequencies:a}
    }
    \subfloat[Full space-time coarsening.]
    {
      \scalebox{0.8}{\includegraphics{./figures/LowAndHighFrequencies.2}}
      \label{chap4figureLowAndHighFrequencies:b}
    }
    \caption{Low and high frequencies $\theta_x$ and $\theta_t$ for semi
      coarsening and full space-time coarsening.}
    \label{chap4_figureFrequencies}
\end{figure}
In Figure \ref{chap4_figureFrequencies}, the high and low frequencies are
illustrated for the two coarsening strategies.
\begin{definition}[Asymptotic smoothing factors]
  Let $\hat{\mathcal{S}}_{\tau_L,h_L}(\theta_x, \theta_t)$ be the symbol of the
  block Jacobi smoother. Then the smoothing factor for
  semi-coarsening in time is
  \begin{align*}
    \mu_S^\mathrm{s} := \max \left\{
      \varrho(\hat{\mathcal{S}}_{\tau_L,h_L}(\theta_x, \theta_t)) :
      (\theta_x,\theta_t) \in \Theta_{L_x,L_t}^{\mathrm{high},\mathrm{s}} \right\},
  \end{align*}
  and the smoothing factor for full space-time coarsening is
  \begin{align*}
    \mu_S^\mathrm{f} := \max \left\{
      \varrho(\hat{\mathcal{S}}_{\tau_L,h_L}(\theta_x, \theta_t)) :
      (\theta_x,\theta_t) \in \Theta_{L_x,L_t}^{\mathrm{high},\mathrm{f}} \right\}.
  \end{align*}
\end{definition}
To study the smoothing behavior, we need 
the eigenvalues of the Fourier symbol $\hat{\mathcal{S}}_{\tau_L,h_L}(\theta_x,
\theta_t)$:

\begin{lemma}\label{chap4_lemma20}
  The spectral radius of the Fourier symbol
  $\hat{\mathcal{S}}_{\tau_L,h_L}(\theta_x, \theta_t)$ is given by
  \begin{align*}
    \rho\left(  \hat{\mathcal{S}}_{\tau_L,h_L}(\theta_x, \theta_t)
    \right) = \max\left\{ \abs{1 - \omega_t},
        \hat{\mathcal{S}}(\omega_t,\alpha(\theta_x, \mu), \theta_t)
    \right\}
  \end{align*}
  with
  \begin{align*}
    \left(\hat{\mathcal{S}}(\omega_t,\alpha, \theta_t)\right)^2 := (1-
    \omega_t)^2 + 2 \omega_t (1-\omega_t) \alpha
    \cos(\theta_t) + \alpha^2 \omega_t^2,
  \end{align*}
  where $\alpha(\theta_x, \mu) := R(-\mu \beta(\theta_x))$ and $R(z)$ is the
  $(p_t,p_t+1)$ subdiagonal Pad\'{e} approximation of the exponential function
  $e^z$ and  $\mu := \tau_L h_L^{-2}$ is a discretization parameter.
\end{lemma}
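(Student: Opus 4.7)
The plan is to exploit the affine structure of the symbol derived in Lemma \ref{chap4_lemma18} and then reduce the spectral-radius computation to the single non-trivial eigenvalue of $T := (K_{\tau_L} + h_L^{-2}\beta(\theta_x) M_{\tau_L})^{-1} N_{\tau_L}$, which is already known from Lemma \ref{chap4_theorem2}. Concretely, the symbol can be written as
\[
  \hat{\mathcal{S}}_{\tau_L,h_L}(\theta_x,\theta_t) \;=\; (1-\omega_t)\, I_{N_t} \;+\; \omega_t\, e^{-\i \theta_t}\, T,
\]
so any eigenpair $(\lambda,v)$ of $T$ gives an eigenpair $\bigl((1-\omega_t) + \omega_t e^{-\i \theta_t}\lambda,\, v\bigr)$ of $\hat{\mathcal{S}}_{\tau_L,h_L}$. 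This yields the set identity $\sigma(\hat{\mathcal{S}}_{\tau_L,h_L}) = \{(1-\omega_t) + \omega_t e^{-\i \theta_t}\lambda : \lambda \in \sigma(T)\}$, and with it the spectral radius is immediately reduced to evaluating the two candidate moduli once $\sigma(T)$ is known.

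Next, I would apply Lemma \ref{chap4_theorem2} with the choice $\lambda = h_L^{-2}\beta(\theta_x)$. Since $\mu := \tau_L h_L^{-2}$, this gives $\lambda\tau_L = \mu\,\beta(\theta_x)$, and hence $\sigma(T) = \{0,\, \alpha(\theta_x,\mu)\}$ with $\alpha(\theta_x,\mu) = R(-\mu\beta(\theta_x))$ (the sign being fixed by tracking the convention of Lemma \ref{chap4_theorem2}; a sanity check with backward Euler, where $R(z) = 1/(1-z)$, confirms this). Combining with the previous observation, the two elements of $\sigma(\hat{\mathcal{S}}_{\tau_L,h_L}(\theta_x,\theta_t))$ are $1-\omega_t$ and $(1-\omega_t) + \omega_t e^{-\i\theta_t}\alpha(\theta_x,\mu)$.

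The only remaining computation is the modulus of the second eigenvalue. Because $R$ is a Padé approximant with real coefficients evaluated at a real argument, $\alpha(\theta_x,\mu) \in \mathbb{R}$, and writing $|z|^2 = z\bar z$ gives
\[
  \bigl|(1-\omega_t) + \omega_t e^{-\i\theta_t}\alpha\bigr|^2 \;=\; (1-\omega_t)^2 + 2\omega_t(1-\omega_t)\,\alpha\cos(\theta_t) + \omega_t^2 \alpha^2,
\]
which is exactly $(\hat{\mathcal{S}}(\omega_t,\alpha,\theta_t))^2$. Taking the maximum with $|1-\omega_t|$ yields the stated formula.

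The main obstacle is not algebraic but conceptual bookkeeping: one must correctly align the sign convention between Lemma \ref{chap4_theorem2} and the $e^{-\i\theta_t} N_{\tau_L}$ term appearing in the smoother symbol, to ensure the argument of $R$ is indeed $-\mu\beta(\theta_x)$ as stated. Once that is pinned down, the rest of the proof is a direct substitution plus a one-line computation of $|z|^2$.
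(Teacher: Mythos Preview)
Your proposal is correct and follows essentially the same route as the paper: write the symbol as an affine function of $T = (K_{\tau_L}+h_L^{-2}\beta(\theta_x)M_{\tau_L})^{-1}N_{\tau_L}$, invoke Lemma~\ref{chap4_theorem2} to get $\sigma(T)=\{0,\alpha(\theta_x,\mu)\}$, and then compute $|1-\omega_t+\omega_t e^{-\i\theta_t}\alpha|^2$ directly. Your explicit attention to the sign convention (via the backward-Euler sanity check) is in fact more careful than the paper, which simply asserts the result of applying Lemma~\ref{chap4_theorem2}.
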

\begin{proof}
   The eigenvalues of the Fourier symbol
     \begin{align*}
    \hat{\mathcal{S}}_{\tau_L,h_L}(\theta_x, \theta_t) = (1-\omega_t) I_{N_t}
    + \omega_t e^{-\i \theta_t} \left( K_{\tau_L} + h_L^{-2} \beta(\theta_x)
      M_{\tau_L} \right)^{-1} N_{\tau_L}
  \end{align*}
   are given by
   \begin{align*}
     \sigma(\hat{\mathcal{S}}_{\tau_L,h_L}(\theta_x, \theta_t)) = 1 - \omega_t + e^{-\i \theta_k} \omega_t  \sigma((K_{\tau_L} + h_L^{-2} \beta(\theta_x) M_{\tau_L})^{-1} N_{\tau_L}).
   \end{align*}
   With Lemma \ref{chap4_theorem2} and using the definition of $\alpha(\theta_x, \mu)$ we are now able to compute the spectrum as
   \begin{align*}
     \sigma(\hat{\mathcal{S}}_{\tau_L,h_L}(\theta_x, \theta_t)) = \left\{1 - \omega_t, 1 - \omega_t + e^{-\i \theta_k} \omega_t \alpha(\theta_x, \mu) \right\}.
   \end{align*}
   Hence we obtain the spectral radius
   \[ \varrho(\hat{\mathcal{S}}_{\tau_L,h_L}(\theta_x, \theta_t)) = \max \left\{ \abs{1 - \omega_t}, \abs{1 - \omega_t +
       e^{-\i \theta_k} \omega_t \alpha(\theta_x, \mu)} \right\}. \]
   Direct calculations lead to
   \begin{align*}
     \abs{1 - \omega_t +
       e^{-\i \theta_k} \omega_t \alpha(\theta_x, \mu)}^2 = (1-\omega_t)^2 + 2
   \omega_t (1 - \omega_t) \alpha(\theta_x, \mu) \cos(\theta_k) + (\alpha(\theta_x, \mu))^2
   \omega_t^2,
   \end{align*}
   which completes the proof.
\end{proof}

Next, we study the smoothing behavior of the damped block Jacobi
iteration for the case when semi-coarsening in time is
applied.
\begin{lemma}\label{chap4_lemma21}
  For the function
  \begin{align*}
    \left(\hat{\mathcal{S}}(\omega_t,\alpha, \theta_t)\right)^2 := (1-
    \omega_t)^2 + 2 \omega_t (1-\omega_t) \alpha
    \cos(\theta_t) + \alpha^2 \omega_t^2
  \end{align*}
 with $\alpha = \alpha(\theta_x, \mu)$ as defined in Lemma \ref{chap4_lemma20}
 and even polynomial degrees $p_t$, the min-max principle
\begin{align*}
  \inf_{\omega_t \in (0,1]} \sup_{\stackrel{\theta_t \in [\frac{\pi}{2},
      \pi]}{\theta_x \in [0,\pi] }}
  \hat{\mathcal{S}}(\omega_t,\alpha(\theta_x,\mu), \theta_t) =
  \frac{1}{\sqrt{2}}
\end{align*}
holds for any discretization parameter $\mu \geq 0$ with the optimal parameters
\begin{align*}
  \omega_t^\ast = \frac{1}{2}, \qquad \theta_t^\ast = \frac{\pi}{2} \qquad
  \text{and} \qquad \theta_x^\ast = 0.
\end{align*}
\end{lemma}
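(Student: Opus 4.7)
The plan is to resolve the double supremum for fixed $\omega_t$ by first optimizing in $\theta_t$ and then in $\theta_x$, and afterwards to minimize a simple scalar function of $\omega_t$. The one structural fact about the time discretization I would invoke is that, for even $p_t$, the subdiagonal Padé approximation $R(z)$ to $e^z$ takes values in $[0,1]$ on $(-\infty,0]$ with $R(0)=1$; in particular $\alpha(\theta_x,\mu) = R(-\mu\beta(\theta_x)) \in [0,1]$ whenever $\mu \geq 0$ and $\theta_x \in [0,\pi]$, with $\beta$ as in Lemma~\ref{chap4_lemma17}.

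For the innermost supremum over $\theta_t$, I would differentiate the squared symbol to obtain $\partial_{\theta_t}\hat{\mathcal{S}}^2 = -2\omega_t(1-\omega_t)\alpha\sin\theta_t$. Since $\omega_t\in(0,1]$, $\alpha\geq 0$, and $\sin\theta_t\geq 0$ on $[\pi/2,\pi]$, this derivative is non-positive, so $\hat{\mathcal{S}}^2$ is monotonically decreasing on $[\pi/2,\pi]$ and its maximum is attained at $\theta_t^\ast = \pi/2$. At this value the cross term vanishes and the expression reduces to $(1-\omega_t)^2 + \alpha^2\omega_t^2$.

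For the next supremum over $\theta_x \in [0,\pi]$, I would note that the reduced expression is monotone increasing in $\alpha^2$. The bound $\alpha^2 \leq 1$ holds uniformly by the positivity/boundedness of $R$ recorded in the first paragraph (which is A-stability together with the even-degree sign property). Equality $\alpha=1$ is achieved precisely when $\mu\beta(\theta_x)=0$; since $\beta(0)=0$ irrespective of $\mu$, this occurs at $\theta_x^\ast = 0$, and the supremum becomes $(1-\omega_t)^2 + \omega_t^2$ uniformly in the discretization parameter $\mu$.

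Finally, I would minimize $f(\omega_t) := (1-\omega_t)^2 + \omega_t^2$ on $(0,1]$: $f'(\omega_t) = 2(2\omega_t-1)$ vanishes uniquely at $\omega_t^\ast = 1/2$, where $f$ attains the minimum $1/2$ by convexity, so $\hat{\mathcal{S}} = 1/\sqrt{2}$. The only delicate point in the whole argument is the sign property $\alpha \geq 0$ for even $p_t$: this is exactly what forces the worst-case time frequency to be $\theta_t = \pi/2$ rather than $\theta_t = \pi$, and without it the analysis of the inner supremum would split into cases; the rest is elementary single-variable calculus.
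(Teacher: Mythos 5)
Your proposal is correct and follows essentially the same route as the paper: first use the positivity of $\alpha$ for even $p_t$ to push the $\theta_t$-supremum to $\pi/2$, then use $\alpha(0,\mu)=1$ together with $|\alpha|\le 1$ to push the $\theta_x$-supremum to $0$, and finally minimize $(1-\omega_t)^2+\omega_t^2$ at $\omega_t^\ast=1/2$. The only difference is that you make the monotonicity in $\theta_t$ explicit by differentiating, whereas the paper simply asserts the argsup; the substance is identical.
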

\begin{proof}
  Since we consider even polynomial degrees $p_t$, the $(p_t,p_t+1)$
  subdiagonal Pad\'{e} approximation $R(z)$ of the exponential
  function $e^z$ is positive for all $z \leq 0$. Hence we also have
  that $\alpha(\theta_x, \mu) = R(-\mu \beta(\theta_x))$ is positive
  for all $\mu \geq 0$ and $\theta_x \in [0,\pi]$. Since $\omega_t \in
  (0,1]$, we obtain
  \begin{align*}
     \theta_t^\ast := \argsup_{\theta_t \in [\frac{\pi}{2},\pi]}
     \hat{\mathcal{S}}(\omega_t,\alpha(\theta_x,\mu), \theta_t) =
     \frac{\pi}{2}.
  \end{align*}
  Since $\alpha(0,\mu)=1$ and $\abs{\alpha(\theta_x,\mu)} \leq 1$ for all
  $\theta_x \in [0,\pi]$ and $\mu \geq 0$ we get
  \begin{align*}
   \theta_x^\ast := \argsup_{\theta_x \in [0,\pi]}
   \hat{\mathcal{S}}(\omega_t,\alpha(\theta_x,\mu), \theta^\ast) = 0.
  \end{align*}
  Hence we have to find the infimum of
  \begin{align*}
    \left(\hat{\mathcal{S}}(\omega_t,\alpha(\theta_x^\ast,\mu),
      \theta_t^\ast)\right)^2 = (1-\omega_t)^2 + \omega_t^2,
  \end{align*}
  which is obtained for $\omega_t^\ast = \frac{1}{2}$. This implies that 
  \[  \left(\hat{\mathcal{S}}(\omega_t^\ast,\alpha(\theta_x^\ast,\mu),
      \theta_t^\ast)\right)^2 = \frac{1}{2}, \]
  which completes the proof.
\end{proof}
\begin{lemma}[Asymptotic smoothing factor for semi-coarsening]\label{chap4_lemma21b}
  For the function
    \begin{align*}
    \left(\hat{\mathcal{S}}(\omega_t,\alpha(\theta_x,\mu), \theta_t)\right)^2 = (1-
    \omega_t)^2 + 2 \omega_t (1-\omega_t) \alpha(\theta_x,\mu)
    \cos(\theta_t) + (\alpha(\theta_x,\mu))^2 \omega_t^2,
  \end{align*}
  where $\alpha = \alpha(\theta_x, \mu)$ is defined as in Lemma \ref{chap4_lemma20}
  and the choice $\omega_t^\ast = \frac{1}{2}$ and any polynomial degree $p_t \in
  \IN_0$, we have the bound
  \begin{align*}
  \sup_{\stackrel{\theta_t \in [\frac{\pi}{2},
      \pi]}{\theta_x \in [0,\pi] }}
  \hat{\mathcal{S}}(\omega_t,\alpha(\theta_x,\mu), \theta_t) \leq
  \frac{1}{\sqrt{2}}.
  \end{align*}
\end{lemma}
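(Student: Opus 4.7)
The plan is to exploit the simplification produced by $\omega_t^{\ast}=1/2$. Direct computation gives
\[
(\hat{\mathcal{S}}(1/2,\alpha,\theta_t))^2
   = \tfrac14\bigl(1+2\alpha\cos\theta_t+\alpha^2\bigr)
   = \tfrac14\,\bigl|1+\alpha\,e^{-\i\theta_t}\bigr|^2,
\]
so proving $\hat{\mathcal{S}}\le 1/\sqrt 2$ is equivalent to proving $|1+\alpha e^{-\i\theta_t}|\le \sqrt 2$ for every $\theta_t\in[\pi/2,\pi]$ and every admissible value of $\alpha=R(-\mu\beta(\theta_x))$. The only general property of $R$ I would invoke at this step is A-stability of the subdiagonal $(p_t,p_t+1)$ Pad\'e approximation, which yields $|\alpha|\le 1$ uniformly in $p_t\in\IN_0$, $\mu\ge 0$, and $\theta_x\in[0,\pi]$.

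Next I would split on the sign of $\alpha$. On $\theta_t\in[\pi/2,\pi]$ we have $\cos\theta_t\in[-1,0]$. If $\alpha\ge 0$, the cross term $2\alpha\cos\theta_t$ is nonpositive, so $|1+\alpha e^{-\i\theta_t}|^2\le 1+\alpha^2\le 2$; this covers in particular every even $p_t$, for which $R$ is already known from Lemma~\ref{chap4_lemma21} to be nonnegative on $(-\infty,0]$. If $\alpha<0$, the supremum over $\theta_t\in[\pi/2,\pi]$ migrates to $\theta_t=\pi$, where the cross term is maximally positive, and the bound becomes $|1+\alpha e^{-\i\theta_t}|^2\le (1+|\alpha|)^2$. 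This is $\le 2$ precisely when $|\alpha|\le \sqrt 2-1$, equivalently $\alpha\ge 1-\sqrt 2\approx -0.414$.

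The hard part is therefore the odd-$p_t$ case: to close the argument one must establish the sharper quantitative bound $R(z)\ge 1-\sqrt 2$ on $(-\infty,0]$ for every subdiagonal $(p_t,p_t+1)$ Pad\'e approximation of $e^z$ with odd $p_t$. For a fixed small $p_t$ this is elementary: differentiation of $R$ locates the unique negative critical point, and one checks that the minimum is comfortably above $1-\sqrt 2$ (for instance, when $p_t=1$ one obtains $\min R = R(-3-3\sqrt 3)\approx -0.10$). A uniform proof across all odd $p_t$ would use the partial fraction expansion of $R$ together with the fact that the poles of the subdiagonal Pad\'e approximants of $e^z$ lie in the open right half-plane, which keeps $R$ well away from its asymptote $0$ from below on the negative real axis; alternatively one can appeal to tabulated extremal values for subdiagonal Pad\'e approximants from the numerical ODE literature. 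Combining the two sign cases then yields the uniform estimate $\hat{\mathcal{S}}(1/2,\alpha(\theta_x,\mu),\theta_t)\le 1/\sqrt 2$ stated in the lemma.
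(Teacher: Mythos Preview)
Your proposal is correct and follows essentially the same route as the paper. You split on the sign of $\alpha$, handle $\alpha\ge 0$ by the elementary inequality $1+2\alpha\cos\theta_t+\alpha^2\le 1+\alpha^2\le 2$ on $[\pi/2,\pi]$, and for $\alpha<0$ reduce to $\theta_t=\pi$, where the bound $\hat{\mathcal S}=\tfrac12(1+|\alpha|)\le 1/\sqrt{2}$ requires a uniform lower bound on $R$ over the negative real axis.

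The only difference is the constant used at that last step. The paper does not prove the lower bound either; it simply asserts the sharper inequality $R(z)\ge \tfrac12(5-3\sqrt{3})$ for every subdiagonal $(p_t,p_t{+}1)$ Pad\'e approximant on $z<0$, which is exactly the $p_t=1$ minimum you computed ($\approx -0.098$). With that constant one gets $\tfrac12(1+|\alpha|)\le \tfrac34(\sqrt{3}-1)<1/\sqrt{2}$, a bit tighter than your sufficient threshold $|\alpha|\le \sqrt{2}-1$. So your structure matches the paper's; you merely stop at the weaker (and still sufficient) bound and are explicit that a uniform proof across all odd $p_t$ is the residual work, whereas the paper records the sharp value as a fact.
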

\begin{proof}
  For even polynomial degrees $p_t$, we can apply Lemma
  \ref{chap4_lemma21} to get the bound stated. For odd polynomial
  degrees, the $(p_t,p_t+1)$ subdiagonal Pad\'{e} approximation $R(z)$
  of the exponential function $e^z$ is negative for large negative
  values of $z$. If the value of $\alpha(\theta_x^\ast,\mu) = R(-\mu
  \beta(\theta_x^\ast))$ for the optimal parameter $\theta_x^\ast \in
       [0,\pi]$ is positive, we get directly the bound of Lemma
       \ref{chap4_lemma21b}. Otherwise we obtain
  \begin{align*}
     \theta_t^\ast := \argsup_{\theta_t \in [\frac{\pi}{2},\pi]}
     \hat{\mathcal{S}}(\omega_t,\alpha(\theta_x^\ast,\mu), \theta_t) = \pi.
  \end{align*}
  For a negative $\alpha(\theta_x^\ast,\mu)$, this implies that
  \begin{align*}
    \sup_{\stackrel{\theta_t \in [\frac{\pi}{2},
      \pi]}{\theta_x \in [0,\pi] }}
  \hat{\mathcal{S}}(\omega_t^\ast,\alpha(\theta_x,\mu),
  \theta_t) &\leq \frac{1}{2}(1 + \abs{\alpha(\theta_x^\ast,\mu)})
  \leq \frac{3}{4}(\sqrt{3}-1) < \frac{1}{\sqrt{2}},
  \end{align*}
  since any subdiagonal $(p_t,p_t+1)$ Pad\'{e} approximation $R(z)$ is
  bounded from below by $R(z) \geq \frac{1}{2}(5 - 3 \sqrt{3})$ for
  all $z < 0$.
\end{proof}

Lemma \ref{chap4_lemma21b} shows that the asymptotic smoothing factor
for semi-coarsening in time is bounded by $\mu_S^\mathrm{s} \leq
\frac{1}{\sqrt{2}}$. Hence, by applying the damped block Jacobi
smoother with the optimal damping parameter $\omega_t^\ast =
\frac{1}{2}$, the error components in the high
frequencies $\Theta_{L_x,L_t}^{\mathrm{high},\mathrm{s}}$ are
asymptotically damped by a factor of at least $\frac{1}{\sqrt{2}}$. 

\begin{lemma}[Asymptotic smoothing factor for full space-time coarsening]\label{chap4_lemma22}
  For the optimal choice of the damping parameter $\omega_t^\ast = \frac{1}{2}$,
  we have
  \begin{align*}
    \sup_{\theta_t \in [0,\pi]} \hat{\mathcal{S}}(\omega_t^\ast,\alpha,
    \theta_t) = \frac{1}{2}(1 + \abs{\alpha})
  \end{align*}
  with the optimal parameter
  \begin{align*}
    \theta_t^\ast = \begin{cases} 0 & \alpha \geq 0,\\ \pi & \alpha >
      0. \end{cases}
  \end{align*}
\end{lemma}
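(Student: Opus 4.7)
The proof is essentially a direct computation, so my plan is to substitute the optimal damping parameter and then factor the resulting expression. Concretely, I would start from the definition
\begin{align*}
\hat{\mathcal{S}}(\omega_t,\alpha,\theta_t)^2 = (1-\omega_t)^2 + 2\omega_t(1-\omega_t)\alpha\cos(\theta_t) + \alpha^2\omega_t^2
\end{align*}
and substitute $\omega_t^\ast = \tfrac{1}{2}$ to obtain
\begin{align*}
\hat{\mathcal{S}}(\tfrac{1}{2},\alpha,\theta_t)^2 = \tfrac{1}{4}\bigl(1 + 2\alpha\cos(\theta_t) + \alpha^2\bigr).
\end{align*}
The point of the choice $\omega_t^\ast = \tfrac{1}{2}$ becomes apparent: the expression in parentheses is exactly the cosine-law quantity whose extremes over $\theta_t$ factor as perfect squares.

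Next I would take the supremum over $\theta_t \in [0,\pi]$ by case distinction on the sign of $\alpha$. Since $\cos(\theta_t)$ ranges over $[-1,1]$ as $\theta_t$ ranges over $[0,\pi]$, the maximum of $2\alpha\cos(\theta_t)$ is $2|\alpha|$, attained at $\theta_t = 0$ when $\alpha \geq 0$ and at $\theta_t = \pi$ when $\alpha < 0$. In either case the bracket becomes $1 + 2|\alpha| + \alpha^2 = (1+|\alpha|)^2$, so
\begin{align*}
\sup_{\theta_t \in [0,\pi]} \hat{\mathcal{S}}(\tfrac{1}{2},\alpha,\theta_t) = \tfrac{1}{2}\sqrt{(1+|\alpha|)^2} = \tfrac{1}{2}(1+|\alpha|),
\end{align*}
which is the claimed identity. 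The optimal $\theta_t^\ast$ read off from the case analysis matches the statement (with the evident typo $\alpha > 0$ corresponding to the case $\alpha < 0$ in the $\theta_t^\ast = \pi$ branch).

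There is no real obstacle here; the key trick is just recognizing that the choice $\omega_t^\ast = \tfrac{1}{2}$ collapses $\hat{\mathcal{S}}^2$ into $\tfrac{1}{4}$ times a quadratic in $\alpha$ whose extremal values over $\cos(\theta_t) \in [-1,1]$ are the perfect squares $(1\pm\alpha)^2$. If anything, the only care needed is to note that $\theta_t$ is restricted to $[0,\pi]$ rather than the full period $[-\pi,\pi]$, but since $\cos$ already attains all its values on $[0,\pi]$, this restriction is harmless and the endpoints $0$ and $\pi$ indeed realize the supremum.
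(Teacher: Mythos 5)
Your proof is correct and follows essentially the same route as the paper's: substitute $\omega_t^\ast = \tfrac{1}{2}$, observe that $\hat{\mathcal{S}}^2 = \tfrac{1}{4}(1 + 2\alpha\cos\theta_t + \alpha^2)$, and then do a case split on the sign of $\alpha$ to pick the endpoint of $[0,\pi]$ maximizing $\cos\theta_t$ in the direction of $\alpha$, yielding $\tfrac{1}{4}(1+|\alpha|)^2$. You also correctly flag the typo in the stated optimal $\theta_t^\ast$ (the $\theta_t^\ast = \pi$ branch should read $\alpha < 0$), which the paper's proof implicitly confirms.
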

\begin{proof}
  Let $\alpha \in \IR$. For the optimal damping parameter $\omega_t^\ast =
  \frac{1}{2}$ we have
  \begin{align*}
     \left(\hat{\mathcal{S}}(\omega_t^\ast,\alpha, \theta_t)\right)^2 =
     \frac{1}{4}\left( 1 +2 \alpha \cos(\theta_t) + \alpha^2 \right).
  \end{align*}
  First we study the case $\alpha \geq 0$, where we get
  \begin{align*}
    \theta_t^\ast := \argsup_{\theta_t \in [0,\pi]}
    \hat{\mathcal{S}}(\omega_t^\ast,\alpha, \theta_t) = 0.
  \end{align*}
  For the case $\alpha < 0$ we  obtain
  \begin{align*}
    \theta_t^\ast := \argsup_{\theta_t \in [0,\pi]}
    \hat{\mathcal{S}}(\omega_t^\ast,\alpha, \theta_t) = \pi.
  \end{align*}
  This implies that
  \begin{align*}
     \left(\hat{\mathcal{S}}(\omega_t^\ast,\alpha, \theta_t\ast)\right)^2 =
     \frac{1}{4}\left( 1 +2 \abs{\alpha} + \alpha^2 \right) = \frac{1}{4}\left(
        1 + \abs{\alpha}\right)^2,
  \end{align*}
  which completes the proof.
\end{proof}

Lemma \ref{chap4_lemma22} shows that we obtain good smoothing
behavior for the high frequencies with respect to the space
discretization, i.e. $\theta_x \in \Theta_{L_x}^{\mathrm{high}}$, if
$\alpha = \alpha(\theta_x,\mu)$ is sufficiently small for any
frequency $\theta_x \in[\frac{\pi}{2},\pi]$.  Hence combining Lemma
\ref{chap4_lemma21b} with Lemma \ref{chap4_lemma22}, we see that good
smoothing behavior can be obtained for all frequencies
$(\theta_x,\theta_t) \in \Theta_{L_x,L_t}^{\mathrm{high},\mathrm{f}}$,
if the function $\alpha = \alpha(\theta_x,\mu)$ is sufficiently
small. This results in a restriction on the discretization parameter
$\mu$. With the next lemma we will analyze the behavior of the
smoothing factor $\mu_S^\mathrm{f}$ with respect to the discretization
parameter $\mu$ for even polynomial degrees $p_t \in \IN_0$.
\begin{lemma}\label{chap4_lemma23}
  Let $p_t \in \IN_0$ be even.  Then for the optimal choice of the damping
  parameter $\omega_t^\ast = \frac{1}{2}$ we have
  \begin{align*}
     \sup_{\stackrel{\theta_t \in [0,\pi]}{\theta_x \in [\frac{\pi}{2},\pi]}}
     \hat{\mathcal{S}}(\omega_t^\ast,\alpha(\theta_x,\mu),\theta_t) = \frac{1}{2}(1 +
     R(-3 \mu)),
  \end{align*}
  where $R(z)$ is the $(p_t,p_t+1)$ subdiagonal Pad\'{e} approximation of the
  exponential function $e^z$.
\end{lemma}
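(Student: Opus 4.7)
The plan is to peel off the two suprema one at a time, first over $\theta_t$ using Lemma \ref{chap4_lemma22}, then over $\theta_x$ using structural properties of $\beta$ and the Padé approximant $R$.

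First I would fix $\theta_x$ and apply Lemma \ref{chap4_lemma22} pointwise: with $\omega_t^\ast = \tfrac{1}{2}$, the inner supremum over $\theta_t \in [0,\pi]$ is exactly $\tfrac{1}{2}(1 + |\alpha(\theta_x,\mu)|)$. Because $p_t$ is even, the $(p_t,p_t+1)$ subdiagonal Padé approximation $R(z)$ of $e^z$ is positive on $(-\infty,0]$ (this was already used in the proof of Lemma \ref{chap4_lemma21}), so the absolute value can be dropped and $|\alpha(\theta_x,\mu)| = R(-\mu \beta(\theta_x))$.

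It then remains to evaluate $\sup_{\theta_x \in [\pi/2,\pi]} R(-\mu \beta(\theta_x))$. I would first analyze $\beta(\theta_x) = 6\tfrac{1-\cos\theta_x}{2+\cos\theta_x}$ on $[\pi/2,\pi]$: a direct differentiation gives $\beta'(\theta_x) = \tfrac{18 \sin\theta_x}{(2+\cos\theta_x)^2} > 0$ on $(0,\pi)$, so $\beta$ is strictly increasing on the interval of interest with $\beta(\pi/2) = 3$ and $\beta(\pi) = 12$. Next, since $R$ approximates $e^z$ to high order at $0$ with $R'(0)=1$, and since for even $p_t$ the subdiagonal Padé $R$ is a positive rational function on $(-\infty,0]$ that decreases monotonically from $R(0)=1$ down to $0$ as $z \to -\infty$, the composition $\theta_x \mapsto R(-\mu\beta(\theta_x))$ is nonincreasing on $[\pi/2,\pi]$ for every $\mu \geq 0$. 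Therefore the supremum is attained at $\theta_x^\ast = \pi/2$, giving $R(-\mu\beta(\pi/2)) = R(-3\mu)$, and combining with Step~1 yields the claimed value $\tfrac{1}{2}(1 + R(-3\mu))$.

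The main obstacle is justifying the monotonicity of $R$ on $(-\infty,0]$ for even $p_t$. For $p_t = 0$ this is trivial since $R(z) = 1/(1-z)$, but for higher even $p_t$ one has to invoke a property of subdiagonal Padé approximants of $e^z$: they are totally monotone on $(-\infty,0]$, equivalently all real poles of $R$ are positive and, after partial fraction decomposition, $R$ is a positive combination of terms of the form $c_j/(p_j - z)$ with $c_j, p_j > 0$, each of which is strictly increasing on $(-\infty,0]$. I would either cite this standard fact about A-stable Padé schemes or, if a self-contained argument is desired, present the partial fraction decomposition and show term-by-term monotonicity; the rest of the proof is then essentially the two reductions described above.
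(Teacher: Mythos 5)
Your overall plan mirrors the paper's proof: apply Lemma~\ref{chap4_lemma22} to reduce the double supremum to $\sup_{\theta_x\in[\pi/2,\pi]}\tfrac{1}{2}(1+\abs{\alpha(\theta_x,\mu)})$, then use positivity of $R$ on $(-\infty,0]$ for even $p_t$, monotonicity of $\beta$ with range $\beta([\pi/2,\pi])=[3,12]$, and monotonicity of $R$ on $(-\infty,0]$ to place the supremum at $\theta_x^\ast=\pi/2$. The extra checks you supply for $\beta$ (the sign of $\beta'$ and the endpoint values $\beta(\pi/2)=3$, $\beta(\pi)=12$) are valid and make explicit what the paper only asserts.

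The justification you offer for the crux step, the monotonicity of $R$ on $(-\infty,0]$, does not go through. Subdiagonal Pad\'{e} approximants of $e^z$ with denominator degree $p_t+1\geq 3$ have complex-conjugate pole pairs, so $R$ is \emph{not} a positive combination of terms $c_j/(p_j-z)$ with real $c_j,p_j>0$, and the term-by-term monotonicity argument is unavailable. Indeed $R$ is not monotone on $(-\infty,0]$ for $p_t=2$: with $R(z)=\bigl(1+\tfrac{2}{5}z+\tfrac{1}{20}z^2\bigr)/\bigl(1-\tfrac{3}{5}z+\tfrac{3}{20}z^2-\tfrac{1}{60}z^3\bigr)$ one finds $R(-3)=5/92\approx 0.054$ but $R(-12)=17/293\approx 0.058$, together with $R'(-3)>0$ and $R'(-12)<0$, so $R$ has an interior local minimum in $(-12,-3)$ and then rises before decaying to zero. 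The paper's own proof simply asserts the monotonicity of $\alpha=R(-\mu\beta)$ in $\beta$ for even $p_t$ and thus shares this gap, though it does not commit to the invalid partial-fraction route. As written, neither argument establishes the stated equality for $p_t\geq 2$; for such $p_t$ the supremum over $\theta_x\in[\pi/2,\pi]$ can sit at $\theta_x=\pi$ (yielding $R(-12\mu)$ rather than $R(-3\mu)$) for a range of $\mu$.
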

\begin{proof}
  In view of Lemma \ref{chap4_lemma22} it remains to compute the supremum
  \begin{align*}
    \sup_{\theta_x \in [\frac{\pi}{2},\pi]} \frac{1}{2}(1+\abs{\alpha(\theta_x,
      \mu)}).
  \end{align*}
  Since for even polynomial degrees $p_t$ the function $\alpha(\theta_x,\mu) =
  R(-\mu \beta(\theta_x))$ is monotonically decreasing in
  $\beta(\theta_x)$, the supremum is obtained for $\beta(\theta_x) = 3$, since
  $\beta(\theta_x) \in [3,12]$ for $\theta_x \in [\frac{\pi}{2},\pi]$. This
  implies that $\theta_x^\ast = \frac{\pi}{2}$, and we obtain the statement of
  the lemma with
  \begin{align*}
    \sup_{\stackrel{\theta_t \in [0,\pi]}{\theta_x \in [\frac{\pi}{2},\pi]}}
     \hat{\mathcal{S}}(\omega_t^\ast,\alpha(\theta_x,\mu)),\theta_t) =
     \hat{\mathcal{S}}(\omega_t^\ast,\alpha(\theta_x^\ast, \mu),\theta_t^\ast)
     = \frac{1}{2}(1+\abs{\alpha(\theta_x^\ast, \mu)})
     = \frac{1}{2}(1 + R(-3 \mu)).
  \end{align*}
\end{proof}

The proof of Lemma \ref{chap4_lemma23} only holds for even polynomial
degrees, but the result is also true for odd polynomial degrees $p_t$,
only the proof gets more involved, since the Pad\'{e} approximation
$R(z)$, $z\leq0$ is not monotonically decreasing for odd polynomial
degrees.
\begin{remark}\label{chap4_remarkCriticalDiscretizationParameter}
  In view of Lemma \ref{chap4_lemma23} we obtain a good smoothing
  behavior for the high frequencies in space $\theta_x \in
  \Theta_{L_x}^{\mathrm{high}}$, i.e. $\mu_S^{\mathrm{f}} \leq
  \frac{1}{\sqrt{2}}$, if the discretization parameter $\mu$ is large
  enough, i.e.
  \begin{align}\label{chap4_equationOptDiscrParam}
    \mu \geq \mu_{p_t}^\ast \quad \text{with} \quad R(-3\mu_{p_t}^\ast) = \sqrt{2} -1.
  \end{align}
  Hence we are able to compute the critical discretization parameter
  $\mu_{p_t}^\ast$ with respect to the polynomial degree $p_t$,
  \begin{equation*}
  \begin{aligned}
    \mu_0^\ast &= \frac{\sqrt{2}}{3} \approx 0.4714045208,\\
    \mu_1^\ast &= \frac{1}{3} (-3 - \sqrt{2} + \sqrt{11 + 12 \sqrt{2}}) \approx 0.2915022565,\\
    \mu_2^\ast &\approx 0.2938105446,\\
    \mu_3^\ast &\approx 0.2937911168,\\
    \mu_\infty^\ast &\approx 0.2937911957.
  \end{aligned}
  \end{equation*}
  To compute the critical discretization parameter $\mu_{\infty}^\ast$,
  we used the fact that the $(p_t,p_t+1)$ subdiagonal Pad\'{e}
  approximation $R(z)$ converges to the exponential function $e^z$ for
  $z \leq 0$ as $p_t \rightarrow \infty$.
\end{remark}
\begin{figure}[t]
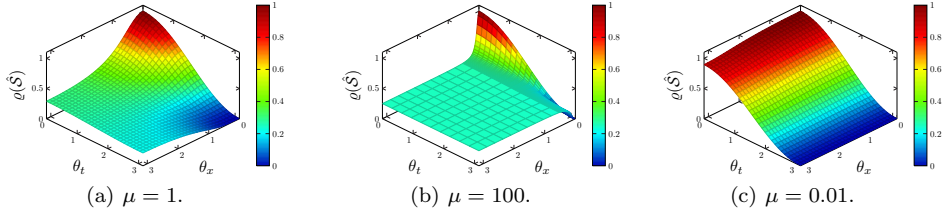

    \centering
    \subfloat[$\mu = 1$.]
    {
      \scalebox{0.34}{\input{./figures/plots/smoothing1.tex}}
      \label{chap4PlotSmoothingFactor1}
    }\hfill
    \subfloat[$\mu = 100$.]
    {
      \scalebox{0.34}{\input{./figures/plots/smoothing2.tex}}
      \label{chap4PlotSmoothingFactor2}
    }
    \subfloat[$\mu = 0.01$.]
    {
      \scalebox{0.34}{\input{./figures/plots/smoothing3.tex}}
      \label{chap4PlotSmoothingFactor3}
    }
    \caption{Smoothing factor
      $\hat{\mathcal{S}}(\omega_t^\ast,\alpha(\theta_x,
      \mu),\theta_t)^2$ for $\theta_x, \theta_t \in[0,\pi]$ for $p_t =
      0$ and different discretization parameters $\mu$.}
\end{figure}
\begin{remark}\label{remarkDampingParameter}
Lemma \ref{chap4_lemma22} shows that for all frequencies
$(\theta_x,\theta_t) \in \Theta_{L_x,L_t}$ we have the bound
\begin{align*}
    \hat{\mathcal{S}}(\omega_t^\ast,\alpha(\theta_x,\mu)),\theta_t)
    \leq \frac{1}{2}\left( 1 + \abs{R(-\beta(\theta_x) \mu)} \right)
    \leq 1.
\end{align*}
Only for $\theta_x = 0$ we have that $\beta(\theta_x)=0$, which
implies $R(-\beta(\theta_x) \mu)) = 1$. Hence if the discretization
parameter $\mu = \tau_L h_L^{-2}$ is large enough we have that
\[ 
  \abs{R(-\beta(\theta_x) \mu)}\approx 0
\]
for almost all frequencies $\theta_x \in \Theta_{L_x}$, which implies
a good smoothing behavior for almost all frequencies, see Figures
\ref{chap4PlotSmoothingFactor1}--\ref{chap4PlotSmoothingFactor3}. Only
the frequencies $\theta_x \in \Theta_{L_x}$ which are close to zero
imply $\hat{\mathcal{S}}(\omega_t^\ast,\alpha(\theta_x,\mu)),\theta_t)
\approx 1$. Hence for a large discretization parameter $\mu$ the
smoother itself is a good iterative solver for most frequencies, only
the frequencies $\theta_x \in \Theta_{L_x}$ which are close to zero,
i.e. very few low frequencies $\theta_x \in
\Theta_{L_x}^{\mathrm{low}}$, do not converge well. To obtain also a
perfect solver for a large discretization parameter $\mu$ we can
simply apply a correction step after one damped block Jacobi iteration
by restricting the defect in space several times until we arrive at a
very coarse problem. For this small problem one can solve the coarse
correction exactly by solving these small problems forward in
time. Afterward, we correct the solution by prolongating the coarse
corrections back to the fine space-grids.
\end{remark}

\subsection{Two-grid analysis}\label{chap4_twoGridAnalysis}

The iteration matrices for the two-grid cycles with
semi-coarsening and full space-time coarsening are 
\begin{align*}
  \mathcal{M}_{\tau_L,h_L}^{\mathrm{s}} &:= \mathcal{S}_{\tau_L,h_L}^{\nu_2} \left[I -
    \mathcal{P}_{\mathrm{s}}^{L_x,L_t}
    \left(\mathcal{L}_{2\tau_L,h_L}\right)^{-1}
    \mathcal{R}_{\mathrm{s}}^{L_x,L_t}\mathcal{L}_{\tau_L,h_L}
  \right]\mathcal{S}_{\tau_L,h_L}^{\nu_1},\\
  \mathcal{M}_{\tau_L,h_L}^{\mathrm{f}} &:= \mathcal{S}_{\tau_L,h_L}^{\nu_2} \left[I -
    \mathcal{P}_{\mathrm{f}}^{L_x,L_t}
    \left(\mathcal{L}_{2\tau_L,2h_L}\right)^{-1}
    \mathcal{R}_{\mathrm{f}}^{L_x,L_t}\mathcal{L}_{\tau_L,h_L}
  \right]\mathcal{S}_{\tau_L,h_L}^{\nu_1},
\end{align*}
with the restriction and prolongation matrices
\begin{align*}
  \mathcal{R}_{\mathrm{s}}^{L_x,L_t} := I_{N_{L_x}} \otimes \mathcal{R}^{L_t}
  ,\qquad\qquad & \mathcal{R}_{\mathrm{f}}^{L_x,L_t} := \mathcal{R}_x^{L_x}
  \otimes\mathcal{R}^{L_t},\\ 
  \mathcal{P}_{\mathrm{s}}^{L_x,L_t} := I_{N_{L_x}} \otimes \mathcal{P}^{L_t}
  ,\qquad\qquad & \mathcal{P}_{\mathrm{f}}^{L_x,L_t} := \mathcal{P}_x^{L_x}
  \otimes\mathcal{P}^{L_t}.
\end{align*}
The restriction and prolongation matrices in time,
i.e. $\mathcal{R}^{L_t}$ and $\mathcal{P}^{L_t}$ are given by (see
\cite{Gander:2014:AOA})
\begin{align}\label{chap4_restrictionTime}
  \mathcal{R}^L &:= \begin{pmatrix}
    R_1 & R_2 &     &        &        & \\
        &     & R_1 & R_2    &        & \\
        &     &     & \ddots & \ddots & \\
        &     &     &        & R_1    & R_2
  \end{pmatrix} \in \IR^{N_t N_L \times N_t N_{L-1}},\quad
  \mathcal{P}^L &:= (\mathcal{R}^L)^\top,
\end{align}
with the local prolongation matrices $R_1^\top := M_{\tau_L}^{-1}
\widetilde{M}_{\tau_L}^1$ and $R_2^\top := M_{\tau_L}^{-1}
\widetilde{M}_{\tau_L}^2$, where for basis functions $\left\{ \psi_k
\right\}_{k=1}^{N_t} \subset \IP^{p_t}(0,\tau_L)$ and $\left\{
\widetilde{\psi}_k \right\}_{k=1}^{N_t} \subset \IP^{p_t}(0, 2\tau_L)$
the local projection matrices from coarse to fine grids are defined
for $k,l = 1,\ldots,N_t$ by
\begin{align*}
  \widetilde{M}_{\tau_L}^1[k,\ell] := \int_{0}^{\tau_L}
  \widetilde{\psi}_\ell(t) \psi_k(t)
  \mathrm{d}t \quad \text{and} \quad \widetilde{M}_{\tau_L}^2[k,\ell] :=
  \int_{\tau_L}^{2 \tau_L} \widetilde{\psi}_\ell(t) \psi_k(t+\tau) \mathrm{d}t.
\end{align*}
The restriction and prolongation matrices in space for the one
dimensional case are
\begin{align}\label{chap4_spaceRestrictionProlongation}
  \mathcal{R}_x^{L_x} &:= \frac{1}{2}
  \begin{pmatrix}
    2 & 1 &   &   &   &   &\\
      & 1 & 2 & 1 &   &   &\\
      &   & \ddots & \ddots & \ddots & & \\
      &   &        & 1 & 2 & 1 & \\
      &   &        &   &   & 1 & 2
  \end{pmatrix} \in \IR^{N_{L_x} \times N_{L_x-1}},\\ \mathcal{P}_x^{L_x}
  &:= (\mathcal{R}_x^{L_x})^\top  \in \IR^{N_{L_x-1} \times N_{L_x}}.
\end{align}
To analyze the two-grid iteration matrices
$\mathcal{M}_{\tau_L,h_L}^{\mathrm{s}}$ and
$\mathcal{M}_{\tau_L,h_L}^{\mathrm{f}}$ we need
\begin{lemma}\label{chap4_lemma24}
   Let $\vec{u} = (\vec{u}_1,\vec{u}_2,\ldots,\vec{u}_{N_{L_t}})^\top \in \IR^{N_t
    N_{L_x} N_{L_t}}$ for $N_t, N_{L_x}, N_{L_t} \in \IN$ where we assume that
  $N_{L_x}$ and $N_{L_t}$
  are even numbers, and assume that
  \[ \vec{u}_n \in \IR^{N_{t} N_{L_x}} \qquad \text{and} \qquad \vec{u}_{n,r} \in
  \IR^{N_t} \]
  for $n=1,\ldots,N_{L_t}$ and $r=1,\ldots,N_{L_x}$.
  Then the vector $\vec{u}$ can be written as
  \begin{align*}
    \vec{u} = \sum_{(\theta_x,\theta_t) \in \Theta_{L_x,L_t}^{\mathrm{low,f}} } &\big[
    \vec{\psi}^{L_x,L_t}(\theta_x,\theta_t) +
    \vec{\psi}^{L_x,L_t}(\gamma(\theta_x),\theta_t) \\
    &\quad +\vec{\psi}^{L_x,L_t}(\theta_x,\gamma(\theta_t)) +
    \vec{\psi}^{L_x,L_t}(\gamma(\theta_x),\gamma(\theta_t)) \big],
  \end{align*}
  with the shifting operator $\gamma(\theta) := \theta -
  \mathrm{sign}(\theta) \pi$ and the vector
  $\vec{\psi}^{L_x,L_t}(\theta_x,\theta_t) \in \IC^{N_t N_{L_x}
    N_{L_t}}$ as in Lemma \ref{chap4_lemma15}.
\end{lemma}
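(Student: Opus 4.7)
The plan is to start from the full Fourier decomposition of Lemma \ref{chap4_lemma15}, which already gives
\[
  \vec{u} = \sum_{(\theta_x,\theta_t) \in \Theta_{L_x,L_t}} \vec{\psi}^{L_x,L_t}(\theta_x,\theta_t),
\]
and then reorganize this single double-sum into the four-term form by grouping each frequency pair with its three ``harmonic partners'' produced by the shift operator $\gamma$. Since the statement is purely a reindexing of a known complete expansion, no new analytic content is needed; the work is combinatorial.

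First I would partition the index set
$\Theta_{L_x,L_t} = \Theta_{L_x} \times \Theta_{L_t}$
into the four disjoint pieces
\[
  \Theta_{L_x}^{\mathrm{low}} \times \Theta_{L_t}^{\mathrm{low}},\quad
  \Theta_{L_x}^{\mathrm{high}} \times \Theta_{L_t}^{\mathrm{low}},\quad
  \Theta_{L_x}^{\mathrm{low}} \times \Theta_{L_t}^{\mathrm{high}},\quad
  \Theta_{L_x}^{\mathrm{high}} \times \Theta_{L_t}^{\mathrm{high}}.
\]
The first piece is exactly $\Theta_{L_x,L_t}^{\mathrm{low,f}}$ by the definition of full coarsening. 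The key input is Lemma \ref{chap4_lemma9}, which states that $\gamma : \Theta_L^{\mathrm{low}} \to \Theta_L^{\mathrm{high}}$ is a bijection for either $L=L_x$ or $L=L_t$. Using this bijection componentwise, I can reindex the remaining three pieces by letting $\theta_x$ (resp.\ $\theta_t$, resp.\ both) run over the low part and replacing the high entry by $\gamma(\theta_x)$ (resp.\ $\gamma(\theta_t)$, resp.\ both).

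After reindexing, each of the four sums is over the common index set $\Theta_{L_x,L_t}^{\mathrm{low,f}}$, and summing the four reindexed pieces together yields exactly the asserted expansion. The final step is to collect terms at a common $(\theta_x,\theta_t) \in \Theta_{L_x,L_t}^{\mathrm{low,f}}$ and read off the formula in the lemma. I do not anticipate a real obstacle: the only point requiring care is verifying that the four pieces of $\Theta_{L_x,L_t}$ are genuinely disjoint and that the bijection $\gamma$ carries $\Theta_L^{\mathrm{low}}$ onto $\Theta_L^{\mathrm{high}}$ without overlap, both of which are immediate from the definition $\gamma(\theta)=\theta - \mathrm{sign}(\theta)\pi$ and Lemma \ref{chap4_lemma9}.
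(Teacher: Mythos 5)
Your proposal is correct and follows essentially the same route as the paper: both start from the full Fourier decomposition of Lemma~\ref{chap4_lemma15}, split $\Theta_{L_x,L_t}$ into the four disjoint low/high quadrants, and invoke the bijectivity of $\gamma$ from Lemma~\ref{chap4_lemma9} to reindex the three non-low quadrants over $\Theta_{L_x,L_t}^{\mathrm{low,f}}$.
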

\begin{proof}
  Using Lemma \ref{chap4_lemma15} and Lemma \ref{chap4_lemma9} leads
  to the desired result with
  \begin{align*}
    \vec{u} &= \sum_{\theta_x \in \Theta_{L_x} }\sum_{\theta_t \in \Theta_{L_t}}
    \vec{\psi}^{L_x,L_t}(\theta_x,\theta_t)\\
    &= \sum_{\theta_x \in \Theta_{L_x}^{\mathrm{low}} }\sum_{\theta_t \in
      \Theta_{L_t}^{\mathrm{low}}}
    \vec{\psi}^{L_x,L_t}(\theta_x,\theta_t) + \sum_{\theta_x \in
      \Theta_{L_x}^{\mathrm{high}} }\sum_{\theta_t \in
      \Theta_{L_t}^{\mathrm{low}}} \vec{\psi}^{L_x,L_t}(\theta_x,\theta_t)\\
    &\quad + \sum_{\theta_x \in \Theta_{L_x}^{\mathrm{low}} }\sum_{\theta_t \in
      \Theta_{L_t}^{\mathrm{high}}}
    \vec{\psi}^{L_x,L_t}(\theta_x,\theta_t) + \sum_{\theta_x \in
      \Theta_{L_x}^{\mathrm{high}} }\sum_{\theta_t \in
      \Theta_{L_t}^{\mathrm{high}}} \vec{\psi}^{L_x,L_t}(\theta_x,\theta_t)\\
    &=\sum_{(\theta_x,\theta_t) \in \Theta_{L_x,L_t}^{\mathrm{low,f}} } \big[
    \vec{\psi}^{L_x,L_t}(\theta_x,\theta_t) +
    \vec{\psi}^{L_x,L_t}(\gamma(\theta_x),\theta_t) \\
    &\qquad\qquad\qquad\qquad +\vec{\psi}^{L_x,L_t}(\theta_x,\gamma(\theta_t)) +
    \vec{\psi}^{L_x,L_t}(\gamma(\theta_x),\gamma(\theta_t)) \big].
  \end{align*}
\end{proof}

\begin{definition}[Space of harmonics]
  For $N_t, N_{L_x}, N_{L_t} \in \IN$ and the frequencies $(\theta_x,\theta_t)
  \in \Theta_{L_x,L_t}^{\mathrm{low,f}}$ let the vector
  $\vec{\Phi}^{L_x,L_t}(\theta_x,\theta_t) \in \IC^{N_t N_{L_x} N_{L_t}}$ be
  as in Lemma \ref{chap4_lemma15}. Then we define the linear space of
  harmonics with frequencies $(\theta_x,\theta_t)$ as
  \begin{align*}
    \mathcal{E}_{L_x,L_t}(\theta_x,\theta_t) &:= \mathrm{span} \big\{
      \vec{\Phi}^{L_x,L_t}(\theta_x,\theta_t),
      \vec{\Phi}^{L_x,L_t}(\gamma(\theta_x),\theta_t),\\
      &\qquad\qquad\vec{\Phi}^{L_x,L_t}(\theta_x,\gamma(\theta_t)),
      \vec{\Phi}^{L_x,L_t}(\gamma(\theta_x),\gamma(\theta_t)) \big\}\\
      &\phantom{:}= \big\{ \vec{\psi}^{L_x,L_t}(\theta_x,\theta_t) \in \IC^{N_t
        N_{L_x} N_{L_t}}:\\
      &\quad\quad \vec{\psi}_{n,r}^{L_x,L_t}(\theta_x,\theta_t) =
      U_1\vec{\Phi}_{n,r}^{L_x,L_t}(\theta_x,\theta_t) +
      U_2\vec{\Phi}_{n,r}^{L_x,L_t}(\gamma(\theta_x),\theta_t)\\
      &\quad\qquad\qquad\qquad\qquad+
      U_3\vec{\Phi}_{n,r}^{L_x,L_t}(\theta_x,\gamma(\theta_t)) +
      U_4\vec{\Phi}_{n,r}^{L_x,L_t}(\gamma(\theta_x),\gamma(\theta_t)), \\
      &\quad\quad \text{for all } n=1,\ldots,N_{L_t}, r =
      1,\ldots,N_{L_x} \text{ and } U_1,U_2,U_3,U_4 \in \IC^{N_t \times N_t}\big\}.
  \end{align*}
\end{definition}
With the assumption of periodic boundary conditions, see
(\ref{chap4_periodicBCST}), Lemma \ref{chap4_lemma17} implies for the
system matrix $\mathcal{L}_{\tau_L,h_L}$ for all frequencies
$(\theta_x,\theta_t) \in \Theta_{L_x,L_t}^{\mathrm{low,f}}$ the
  mapping property:
\begin{equation}\label{chap4_mappingL}
\begin{aligned}
  \mathcal{L}_{\tau_L,h_L}: \mathcal{E}_{L_x,L_t}(\theta_x,\theta_t)
  &\rightarrow \mathcal{E}_{L_x,L_t}(\theta_x,\theta_t)\\
  \begin{pmatrix} U_1\\U_2\\U_3\\U_4 \end{pmatrix} &\mapsto
    \left(\begin{array}{l}
    \hat{\mathcal{L}}_{\tau_L,h_L}(\theta_x, \theta_t) U_1 \\
    \hat{\mathcal{L}}_{\tau_L,h_L}(\gamma(\theta_x), \theta_t) U_2 \\
    \hat{\mathcal{L}}_{\tau_L,h_L}(\theta_x, \gamma(\theta_t)) U_3 \\
    \hat{\mathcal{L}}_{\tau_L,h_L}(\gamma(\theta_x), \gamma(\theta_t)) U_4
    \end{array}\right)
   =:
  \widetilde{\mathcal{L}}_{\tau_L,h_L}(\theta_x,\theta_t) \begin{pmatrix}
    U_1\\U_2\\U_3\\U_4 \end{pmatrix},
\end{aligned}
\end{equation}
where $\widetilde{\mathcal{L}}_{\tau_L,h_L}(\theta_x,\theta_t) \in
\IC^{4 N_t \times 4 N_t}$ is a block diagonal matrix. With the same
arguments, we obtain with Lemma \ref{chap4_lemma18} for the smoother
for all frequencies $(\theta_x,\theta_t) \in
\Theta_{L_x,L_t}^{\mathrm{low,f}}$ the mapping property
\begin{equation}\label{chap4_mappingS}
\begin{aligned}
  \mathcal{S}_{\tau_L,h_L}^\nu: \mathcal{E}_{L_x,L_t}(\theta_x,\theta_t)
  &\rightarrow \mathcal{E}_{L_x,L_t}(\theta_x,\theta_t)\\
  \begin{pmatrix} U_1\\U_2\\U_3\\U_4 \end{pmatrix} &\mapsto
  \left(\begin{array}{l}
    (\hat{\mathcal{S}}_{\tau_L,h_L}(\theta_x, \theta_t))^\nu U_1 \\
    (\hat{\mathcal{S}}_{\tau_L,h_L}(\gamma(\theta_x), \theta_t))^\nu U_2 \\
    (\hat{\mathcal{S}}_{\tau_L,h_L}(\theta_x, \gamma(\theta_t)))^\nu U_3 \\
    (\hat{\mathcal{S}}_{\tau_L,h_L}(\gamma(\theta_x), \gamma(\theta_t)))^\nu U_4
   \end{array}\right) =:
  \left(\widetilde{\mathcal{S}}_{\tau_L,h_L}(\theta_x,\theta_t)\right)^\nu
  \begin{pmatrix} U_1\\U_2\\U_3\\U_4 \end{pmatrix},
\end{aligned}
\end{equation}
with the block diagonal matrix
$\widetilde{\mathcal{S}}_{\tau_L,h_L}(\theta_x,\theta_t) \in \IC^{4 N_t
  \times 4 N_t}$. 

To analyze the two-grid cycle on the space of harmonics
$\mathcal{E}_{L_x,L_t}(\theta_x,\theta_t)$ for frequencies $(\theta_x,\theta_t)
\in \Theta_{L_x,L_t}^{\mathrm{low,f}}$, we further have to investigate the
mapping properties of the restriction and prolongation operators for the two
different coarsening strategies $\mathcal{R}_{\mathrm{s}}^{L_x,L_t},
\mathcal{R}_{\mathrm{f}}^{L_x,L_t}$ and $\mathcal{P}_{\mathrm{s}}^{L_x,L_t},
\mathcal{P}_{\mathrm{f}}^{L_x,L_t}$.
\begin{lemma}\label{chap4_lemma25}
  Let $\mathcal{R}_x^{L_x}$ and $\mathcal{P}_x^{L_x}$ be the restriction and
  prolongation matrices as defined in
  (\ref{chap4_spaceRestrictionProlongation}). For $\theta_x \in
  \Theta_{L_x}^{\mathrm{low}}$ let
  $\vec{\varphi}^{L_x}(\theta_x) \in \IC^{N_{L_x}}$ and
  $\vec{\varphi}^{L_x-1}(2\theta_x) \in \IC^{N_{L_x-1}}$ be defined as in Theorem
  \ref{chap4_theorem1}. Then 
  \begin{align*}
    \left(\mathcal{R}_x^{L_x} \vec{\varphi}^{L_x}(\theta_x)\right)[r] =
    \hat{\mathcal{R}}_x(\theta_x) \vec{\varphi}^{L_x-1}(2 \theta_x) [r],
  \end{align*}
  for $r = 2,\ldots,N_{L_x-1}-1$ with the Fourier symbol
  $\hat{\mathcal{R}}_x(\theta_x) := 1 + \cos(\theta_x)$. For the prolongation
  operator we further have
  \begin{align*}
    \left( \mathcal{P}_x^{L_x} \vec{\varphi}^{L_x-1}(2 \theta_x)\right)[s] =
    \left( \hat{\mathcal{P}}_x(\theta_x)\vec{\varphi}^{L_x}(\theta_x) +
      \hat{\mathcal{P}}_x(\gamma(\theta_x))\vec{\varphi}^{L_x}(\gamma(\theta_x))
    \right)[s],
  \end{align*}
  for $s = 2,\ldots,N_{L_x}-1$ with the Fourier symbol
  $\hat{\mathcal{P}}_x(\theta_x) := \frac{1}{2}\hat{\mathcal{R}}_x(\theta_x)$.
\end{lemma}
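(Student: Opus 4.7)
The plan is to prove the two claims by direct computation from the matrix definitions (\ref{chap4_spaceRestrictionProlongation}), using only the elementary identities $\vec{\varphi}(\theta)[\ell] = e^{\i \ell \theta}$ and $e^{\i s\gamma(\theta)} = (-1)^s e^{\i s\theta}$, together with $\cos(\gamma(\theta)) = -\cos(\theta)$. Both claims restrict to interior indices, so the boundary rows of $\mathcal{R}_x^{L_x}$ and $\mathcal{P}_x^{L_x}$ can be ignored and the "generic" stencil applies.

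For the restriction, I would fix an interior index $r\in\{2,\ldots,N_{L_{x}-1}-1\}$ and expand
\begin{align*}
  \bigl(\mathcal{R}_x^{L_x}\vec{\varphi}^{L_x}(\theta_x)\bigr)[r]
  &= \tfrac12 e^{\i(2r-1)\theta_x}+e^{\i 2r\theta_x}+\tfrac12 e^{\i(2r+1)\theta_x}\\
  &= e^{\i 2r\theta_x}\bigl[\tfrac12 e^{-\i\theta_x}+1+\tfrac12 e^{\i\theta_x}\bigr]
   = (1+\cos\theta_x)\,e^{\i r(2\theta_x)}.
\end{align*}
Since $\vec{\varphi}^{L_x-1}(2\theta_x)[r]=e^{\i r(2\theta_x)}$, this is precisely $\hat{\mathcal{R}}_x(\theta_x)\vec{\varphi}^{L_x-1}(2\theta_x)[r]$, which settles the first statement.

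For the prolongation, I would split the index $s$ into even and odd cases, since $\mathcal{P}_x^{L_x}=(\mathcal{R}_x^{L_x})^\top$ acts as a trivial injection at even fine indices and as an averaging at odd ones. Writing $s=2r$ gives $(\mathcal{P}_x^{L_x}\vec{\varphi}^{L_x-1}(2\theta_x))[s]=e^{\i s\theta_x}$, while $s=2r+1$ gives
\begin{align*}
  \bigl(\mathcal{P}_x^{L_x}\vec{\varphi}^{L_x-1}(2\theta_x)\bigr)[s]
  = \tfrac12 e^{\i 2r\theta_x}+\tfrac12 e^{\i 2(r+1)\theta_x}
  = \cos(\theta_x)\,e^{\i s\theta_x}.
\end{align*}
To unify both cases, I would form the ansatz $\hat{\mathcal{P}}_x(\theta_x)\vec{\varphi}^{L_x}(\theta_x)[s]+\hat{\mathcal{P}}_x(\gamma(\theta_x))\vec{\varphi}^{L_x}(\gamma(\theta_x))[s]$ with $\hat{\mathcal{P}}_x(\theta_x):=\tfrac12(1+\cos\theta_x)$, and use $\vec{\varphi}^{L_x}(\gamma(\theta_x))[s]=(-1)^s\vec{\varphi}^{L_x}(\theta_x)[s]$ together with $\cos(\gamma(\theta_x))=-\cos(\theta_x)$. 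For even $s$ the two symbols add to $\tfrac12(1+\cos\theta_x)+\tfrac12(1-\cos\theta_x)=1$, while for odd $s$ they subtract to give $\cos(\theta_x)$, matching both direct computations.

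The only conceptual subtlety — and really the main point of the lemma rather than an obstacle — is that a single coarse-grid Fourier mode $\vec{\varphi}^{L_x-1}(2\theta_x)$ aliases under prolongation to the two-dimensional span of the harmonic pair $\{\vec{\varphi}^{L_x}(\theta_x),\vec{\varphi}^{L_x}(\gamma(\theta_x))\}$, so the prolongation symbol is naturally a column vector of two entries; matching both parities of $s$ simultaneously forces exactly the identity $\hat{\mathcal{P}}_x(\theta_x)=\tfrac12\hat{\mathcal{R}}_x(\theta_x)$ dictated by $\mathcal{P}_x^{L_x}=(\mathcal{R}_x^{L_x})^\top$ together with the factor-of-two redundancy coming from the doubled fine-grid dimension.
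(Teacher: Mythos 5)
Your proof is correct, and it fills in details that the paper itself does not supply: the paper simply remarks ``The proof is classical, see \cite{Trottenberg2001} for example'' and gives no argument. Your direct computation from the stencils --- collapsing the full-weighting sum to $(1+\cos\theta_x)e^{\i r(2\theta_x)}$ for restriction, and then reconciling the two-case (even/odd $s$) behavior of prolongation against the harmonic pair $\{\vec{\varphi}^{L_x}(\theta_x),\vec{\varphi}^{L_x}(\gamma(\theta_x))\}$ using $\vec{\varphi}^{L_x}(\gamma(\theta_x))[s]=(-1)^s\vec{\varphi}^{L_x}(\theta_x)[s]$ and $\cos(\gamma(\theta_x))=-\cos(\theta_x)$ --- is exactly the standard local-Fourier-analysis verification the cited reference has in mind, and your closing observation that $\hat{\mathcal{P}}_x=\tfrac12\hat{\mathcal{R}}_x$ is forced by $\mathcal{P}_x^{L_x}=(\mathcal{R}_x^{L_x})^\top$ together with the doubled fine-grid dimension is the right conceptual takeaway.

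One small point to be aware of, not a flaw in your argument: the dimension annotations in (\ref{chap4_spaceRestrictionProlongation}) appear transposed in the paper ($\mathcal{R}_x^{L_x}\in\IR^{N_{L_x}\times N_{L_x-1}}$ should be $N_{L_x-1}\times N_{L_x}$ for a restriction acting on a fine-grid vector of length $N_{L_x}$), so the exact index offsets in the generic stencil row depend on a convention the paper does not fully pin down. Your computation uses the natural convention (coarse row $r$ averages fine indices $2r-1,2r,2r+1$), which is the one consistent with the stated result, so this does not affect correctness.
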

\begin{proof}
  The prove is classical, see \cite{Trottenberg2001} for example.
\end{proof}

\begin{definition}
   For $N_t, N_{L_x}, N_{L_t} \in \IN$ and the frequencies $(\theta_x,\theta_t)
  \in \Theta_{L_x,L_t}^{\mathrm{low,f}}$ let the vector
  $\vec{\Phi}^{L_x,L_t-1}(\theta_x,\theta_t) \in \IC^{N_t N_{L_x} N_{L_t-1}}$ be
  defined as in Lemma \ref{chap4_lemma15}. Then we define the linear space with
  frequencies $(\theta_x,2\theta_t)$ as
  \begin{align*}
    \Psi_{L_x,L_t-1}(\theta_x,2\theta_t) :&= \mathrm{span}\left\{
      \vec{\Phi}^{L_x,L_t-1}(\theta_x,2\theta_t),
      \vec{\Phi}^{L_x,L_t-1}(\gamma(\theta_x),2\theta_t) \right\}\\
    &= \big\{ \vec{\psi}^{L_x,L_t-1}(\theta_x,2 \theta_t)  \in \IC^{N_t N_{L_x}
      N_{L_t-1}} :\\
      &\quad\quad \vec{\psi}_{n,r}^{L_x,L_t-1}(\theta_x,2 \theta_t) = U_1
      \vec{\Phi}_{n,r}^{L_x,L_t-1}(\theta_x,2\theta_t) + U_2
      \vec{\Phi}_{n,r}^{L_x,L_t-1}(\gamma(\theta_x),2\theta_t)\\
      &\qquad\qquad \text{for all } n = 1,\ldots,N_{L_t}, r = 1,\ldots, N_{L_x}
      \text{ and } U_1, U_2 \in \IC^{N_t \times N_t} \big\}.
  \end{align*}
\end{definition}

For semi-coarsening, the next lemma shows the mapping
property for the restriction operator $\mathcal{R}_{\mathrm{s}}^{L_x,L_t}$.
\begin{lemma}\label{chap4_lemma26}
 The restriction operator $\mathcal{R}_{\mathrm{s}}^{L_x,L_t}$ satisfies the
 mapping property
  \begin{align*}
    \mathcal{R}_{\mathrm{s}}^{L_x,L_t} :
    \mathcal{E}_{L_x,L_t}(\theta_x,\theta_t) \rightarrow
    \Psi_{L_x,L_t-1}(\theta_x,2\theta_t)
  \end{align*}
  with the mapping
  \begin{align*}
    \begin{pmatrix} U_1\\U_2\\U_3\\U_4 \end{pmatrix} \mapsto
      \widetilde{\mathcal{R}}_{\mathrm{s}}(\theta_t) &\begin{pmatrix}
      U_1\\U_2\\U_3\\U_4 \end{pmatrix},
  \end{align*}
  and the matrix
  \begin{align*}
    \widetilde{\mathcal{R}}_{\mathrm{s}}(\theta_t) := \begin{pmatrix}
      \hat{\mathcal{R}}(\theta_t) & 0 &
      \hat{\mathcal{R}}(\gamma(\theta_t)) & 0 \\
      0 & \hat{\mathcal{R}}(\theta_t) & 0 &
      \hat{\mathcal{R}}(\gamma(\theta_t)) 
    \end{pmatrix}
  \end{align*}
  with the Fourier symbol $\hat{\mathcal{R}}(\theta_t) \in \IC^{N_t \times N_t}$
  as defined in Lemma \ref{chap4_lemma12}.
\end{lemma}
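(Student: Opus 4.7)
The plan is to reduce to the one-dimensional time-restriction result of Lemma \ref{chap4_lemma12}, exploiting the tensor-product structure of $\mathcal{R}_{\mathrm{s}}^{L_x,L_t} = I_{N_{L_x}} \otimes \mathcal{R}^{L_t}$. The idea is that semi-coarsening leaves the space direction untouched, so the two spatial frequencies $\theta_x$ and $\gamma(\theta_x)$ remain decoupled after restriction, while the two temporal frequencies $\theta_t$ and $\gamma(\theta_t)$ both get restricted onto the common coarse-grid frequency $2\theta_t$ and must combine.

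First I would take an arbitrary element of $\mathcal{E}_{L_x,L_t}(\theta_x,\theta_t)$ written in the harmonic decomposition
\begin{align*}
\vec{\psi}^{L_x,L_t}_{n,r} &= U_1 \vec{\Phi}^{L_x,L_t}_{n,r}(\theta_x,\theta_t) + U_2 \vec{\Phi}^{L_x,L_t}_{n,r}(\gamma(\theta_x),\theta_t) \\
&\quad + U_3 \vec{\Phi}^{L_x,L_t}_{n,r}(\theta_x,\gamma(\theta_t)) + U_4 \vec{\Phi}^{L_x,L_t}_{n,r}(\gamma(\theta_x),\gamma(\theta_t)),
\end{align*}
and recall from Lemma \ref{chap4_lemma15} that each basis vector factorizes as $\vec{\Phi}^{L_x,L_t}_{n,r}(\theta_x,\theta_t) = \vec{\Phi}^{L_t}_n(\theta_t)\,\vec{\varphi}^{L_x}(\theta_x)[r]$. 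This factorization is exactly what allows the Kronecker-product restriction to act only in time: for each fixed spatial index $r$, the vector $r \mapsto \vec{\psi}^{L_x,L_t}_{\cdot,r}$ is a sum of two elements of $\mathcal{E}_{L_t}(\theta_t)$, scaled by $\vec{\varphi}^{L_x}(\theta_x)[r]$ and $\vec{\varphi}^{L_x}(\gamma(\theta_x))[r]$ respectively.

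Next I would apply $\mathcal{R}^{L_t}$ to the temporal factors and invoke Lemma \ref{chap4_lemma12}, which says that on $\mathcal{E}_{L_t}(\theta_t)$ the symbol is $\bigl(\hat{\mathcal{R}}(\theta_t),\hat{\mathcal{R}}(\gamma(\theta_t))\bigr)$, producing vectors in $\Psi_{L_t-1}(2\theta_t)$. This turns the first and third harmonic components (both carrying spatial frequency $\theta_x$) into a single element $(\hat{\mathcal{R}}(\theta_t)U_1 + \hat{\mathcal{R}}(\gamma(\theta_t))U_3)\vec{\Phi}^{L_t-1}_n(2\theta_t)\,\vec{\varphi}^{L_x}(\theta_x)[r]$, and similarly turns the second and fourth components into one element carrying spatial frequency $\gamma(\theta_x)$ with coefficient $\hat{\mathcal{R}}(\theta_t)U_2 + \hat{\mathcal{R}}(\gamma(\theta_t))U_4$. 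Since the identity factor in space preserves $\vec{\varphi}^{L_x}(\theta_x)$ and $\vec{\varphi}^{L_x}(\gamma(\theta_x))$ unchanged, the image sits in $\Psi_{L_x,L_t-1}(\theta_x,2\theta_t)$.

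Finally I would read off the symbol matrix: the two image coefficients $(\hat{\mathcal{R}}(\theta_t)U_1 + \hat{\mathcal{R}}(\gamma(\theta_t))U_3,\ \hat{\mathcal{R}}(\theta_t)U_2 + \hat{\mathcal{R}}(\gamma(\theta_t))U_4)^\top$ are exactly the product $\widetilde{\mathcal{R}}_{\mathrm{s}}(\theta_t)(U_1,U_2,U_3,U_4)^\top$ with the claimed block structure. The main obstacle here is purely bookkeeping: one has to be careful that the Kronecker ordering, the block-by-time-step indexing of $\mathcal{R}^{L_t}$ in \eqref{chap4_restrictionTime}, and the three-level index $(n,r,\ell)$ used throughout Section \ref{FourierSec} are consistent, so that $\mathcal{R}_{\mathrm{s}}^{L_x,L_t}$ really does act as restriction in the $n$-block while leaving $r$ alone; once this is verified, the rest follows by linearity from Lemma \ref{chap4_lemma12}.
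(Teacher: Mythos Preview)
Your proposal is correct and follows essentially the same approach as the paper: exploit the tensor-product structure $\mathcal{R}_{\mathrm{s}}^{L_x,L_t}=I_{N_{L_x}}\otimes\mathcal{R}^{L_t}$ together with the factorization $\vec{\Phi}^{L_x,L_t}_{n,r}(\theta_x,\theta_t)=\vec{\Phi}^{L_t}_n(\theta_t)\,\vec{\varphi}^{L_x}(\theta_x)[r]$, apply Lemma~\ref{chap4_lemma12} in the time direction, and then use the aliasing $\vec{\Phi}^{L_x,L_t-1}_{n,r}(\theta_x,2\gamma(\theta_t))=\vec{\Phi}^{L_x,L_t-1}_{n,r}(\theta_x,2\theta_t)$ to combine the $U_1,U_3$ and $U_2,U_4$ pairs. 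The only difference is presentational: the paper first carries out the explicit index computation on a single basis vector $\vec{\Phi}^{L_x,L_t}(\theta_x,\theta_t)$ before applying linearity to the full harmonic element, whereas you proceed directly on the general element; both routes resolve the bookkeeping concern you raise in exactly the same way.
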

\begin{proof}
  Let $\vec{\Phi}^{L_x,L_t}(\theta_x, \theta_t) \in \Psi_{L_x, L_t}(\theta_x,
  \theta_t)$ and $\vec{\Phi}^{L_x,L_t-1}(\theta_x, 2\theta_t) \in \Psi_{L_x,
    L_t-1}(\theta_x, 2\theta_t)$ be defined as in Lemma
  \ref{chap4_lemma15}. Then for $n=1,\ldots,N_{L_t-1}$ and $r =
  1,\ldots,N_{L_x}$ we have,
  using Lemma \ref{chap4_lemma12},
  \begin{align*}
    \left( \mathcal{R}_{\mathrm{s}}^{L_x,L_t} \vec{\Phi}^{L_x,L_t}(\theta_x,
      \theta_t) \right)_{n,r} &= \sum_{s=1}^{N_{L_x}}\sum_{m=1}^{N_{L_t}}
    I_{N_{L_x}}[r,s] \mathcal{R}^{L_t}[n,m]\vec{\Phi}_{m,s}^{L_x,L_t}(\theta_x,
      \theta_t)\\
      &= \vec{\varphi}^{L_x}(\theta_x)[r] \sum_{m=1}^{N_{L_t}}
      \mathcal{R}^{L_t}[n,m]\vec{\Phi}_{m}^{L_t}(\theta_t)\\
      &= \vec{\varphi}^{L_x}(\theta_x)[r] \left(
        \mathcal{R}^{L_t}\vec{\Phi}^{L_t}(\theta_t) \right)_n\\
      &= \hat{\mathcal{R}}(\theta_t) \vec{\Phi}_{n}^{L_t-1}(2\theta_t)
      \vec{\varphi}^{L_x}(\theta_x)[r]\\
      &= \hat{\mathcal{R}}(\theta_t)
      \vec{\Phi}_{n,r}^{L_x,L_t-1}(\theta_x,2\theta_t).
  \end{align*}
  Applying this result to the vector $\vec{\psi}^{L_x,L_t}(\theta_x,\theta_t)
  \in
  \mathcal{E}_{L_x,L_t}(\theta_x,\theta_t)$ with $(\theta_x,\theta_t) \in
  \Theta_{L_x,L_t}^{\mathrm{f}}$ results in
  \begin{align*}
    \left( \mathcal{R}_{\mathrm{s}}^{L_x,L_t} \vec{\psi}^{L_x,L_t}(\theta_x,
      \theta_t) \right)_{n,r} &= \hat{\mathcal{R}}(\theta_t) U_1
    \vec{\Phi}_{n,r}^{L_x,L_t-1}(\theta_x,2\theta_t)\\
    &\quad+ \hat{\mathcal{R}}(\theta_t) U_2
    \vec{\Phi}_{n,r}^{L_x,L_t-1}(\gamma(\theta_x),2\theta_t)\\
    &\quad+ \hat{\mathcal{R}}(\gamma(\theta_t)) U_3
    \vec{\Phi}_{n,r}^{L_x,L_t-1}(\theta_x,2\gamma(\theta_t))\\
    &\quad+ \hat{\mathcal{R}}(\gamma(\theta_t)) U_4
    \vec{\Phi}_{n,r}^{L_x,L_t-1}(\gamma(\theta_x),2\gamma(\theta_t)).
    \intertext{Since $\vec{\Phi}_{n,r}^{L_x,L_t-1}(\theta_x,2\gamma(\theta_t))
      = \vec{\Phi}_{n,r}^{L_x,L_t-1}(\theta_x,2\theta_t)$ we further obtain}
    &= \left[  \hat{\mathcal{R}}(\theta_t) U_1 +
      \hat{\mathcal{R}}(\gamma(\theta_t)) U_3 \right]
    \vec{\Phi}_{n,r}^{L_x,L_t-1}(\theta_x,2\theta_t)\\
    &\quad+ \left[  \hat{\mathcal{R}}(\theta_t) U_2 +
      \hat{\mathcal{R}}(\gamma(\theta_t)) U_4
    \right]\vec{\Phi}_{n,r}^{L_x,L_t-1}(\gamma(\theta_x),2\theta_t),
  \end{align*}
  which completes the proof.
\end{proof}

\begin{lemma}\label{chap4_lemma27}
  With the assumptions of periodic boundary conditions
  (\ref{chap4_periodicBCST}) the following mapping
  property for the restriction operator holds:
  \begin{align*}
    \mathcal{R}_{\mathrm{f}}^{L_x,L_t} :
    \mathcal{E}_{L_x,L_t}(\theta_x,\theta_t) \rightarrow
    \Psi_{L_x-1,L_t-1}(2\theta_x,2\theta_t)
  \end{align*}
  with the mapping
  \begin{align*}
    \begin{pmatrix} U_1\\U_2\\U_3\\U_4 \end{pmatrix} \mapsto
      \widetilde{\mathcal{R}}_{\mathrm{f}}(\theta_x,\theta_t) &\begin{pmatrix}
      U_1\\U_2\\U_3\\U_4 \end{pmatrix}
  \end{align*}
  and the matrix
  \begin{align*}
    \widetilde{\mathcal{R}}_{\mathrm{f}}(\theta_x,\theta_t) := \begin{pmatrix}
      \hat{\mathcal{R}}(\theta_x,\theta_t) &
      \hat{\mathcal{R}}(\gamma(\theta_x),\theta_t) &
      \hat{\mathcal{R}}(\theta_x,\gamma(\theta_t)) &
      \hat{\mathcal{R}}(\gamma(\theta_x),\gamma(\theta_t))      
    \end{pmatrix}
  \end{align*}
  with the Fourier symbol
  \begin{align*}
    \hat{\mathcal{R}}(\theta_x,\theta_t) :=
    \hat{\mathcal{R}}_x(\theta_x)\hat{\mathcal{R}}(\theta_t) \in
    \IC^{N_t \times N_t},
  \end{align*}
  where $\hat{\mathcal{R}}_x(\theta_x) \in \IC$ is defined as in Lemma
  \ref{chap4_lemma25}.
\end{lemma}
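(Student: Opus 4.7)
The plan is to mirror the proof of Lemma \ref{chap4_lemma26}, exploiting the tensor-product structure $\mathcal{R}_{\mathrm{f}}^{L_x,L_t} = \mathcal{R}_x^{L_x} \otimes \mathcal{R}^{L_t}$ together with the already established spatial Fourier symbol from Lemma \ref{chap4_lemma25} and the temporal Fourier symbol from Lemma \ref{chap4_lemma12}. Because the periodic boundary conditions (\ref{chap4_periodicBCST}) are assumed, the boundary row issues disappear and the symbol relations of Lemmas \ref{chap4_lemma25} and \ref{chap4_lemma12} apply to every index.

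First I would act with $\mathcal{R}_{\mathrm{f}}^{L_x,L_t}$ on a single generator $\vec{\Phi}^{L_x,L_t}(\theta_x,\theta_t)$, using the tensor-product factorisation $\vec{\Phi}_{n,r}^{L_x,L_t}(\theta_x,\theta_t) = \vec{\Phi}_n^{L_t}(\theta_t)\,\vec{\varphi}^{L_x}(\theta_x)[r]$. Because the spatial identity in $\mathcal{R}_{\mathrm{s}}^{L_x,L_t}$ is replaced by $\mathcal{R}_x^{L_x}$, one factor contributes $\hat{\mathcal{R}}_x(\theta_x)\vec{\varphi}^{L_x-1}(2\theta_x)[r]$ (Lemma \ref{chap4_lemma25}) while the temporal factor contributes $\hat{\mathcal{R}}(\theta_t)\vec{\Phi}_n^{L_t-1}(2\theta_t)$ (Lemma \ref{chap4_lemma12}). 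Multiplying gives exactly $\hat{\mathcal{R}}(\theta_x,\theta_t)\,\vec{\Phi}_{n,r}^{L_x-1,L_t-1}(2\theta_x,2\theta_t)$, where $\hat{\mathcal{R}}(\theta_x,\theta_t) := \hat{\mathcal{R}}_x(\theta_x)\hat{\mathcal{R}}(\theta_t)$.

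Next, I would feed a general harmonic $\vec{\psi}^{L_x,L_t}(\theta_x,\theta_t) \in \mathcal{E}_{L_x,L_t}(\theta_x,\theta_t)$ with coefficients $U_1,U_2,U_3,U_4 \in \IC^{N_t\times N_t}$ into the restriction operator. By linearity this produces four contributions: one for each of the frequencies $(\theta_x,\theta_t)$, $(\gamma(\theta_x),\theta_t)$, $(\theta_x,\gamma(\theta_t))$, $(\gamma(\theta_x),\gamma(\theta_t))$. The essential collapse step is the aliasing identities $\vec{\Phi}^{L_x-1,L_t-1}(2\gamma(\theta_x),2\theta_t) = \vec{\Phi}^{L_x-1,L_t-1}(2\theta_x,2\theta_t)$ and likewise in the time variable, which hold because $2\gamma(\theta) = 2\theta - 2\,\mathrm{sign}(\theta)\pi$ differs from $2\theta$ by an integer multiple of $2\pi$, so $e^{\mathbbm{i} n\cdot 2\gamma(\theta)} = e^{\mathbbm{i} n\cdot 2\theta}$. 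Thus all four outputs land in the single mode $\vec{\Phi}^{L_x-1,L_t-1}(2\theta_x,2\theta_t)$, confirming the target space $\Psi_{L_x-1,L_t-1}(2\theta_x,2\theta_t)$. Collecting the $N_t \times N_t$ coefficient matrices in front gives the single row block
\begin{equation*}
\bigl(\hat{\mathcal{R}}(\theta_x,\theta_t)\ \hat{\mathcal{R}}(\gamma(\theta_x),\theta_t)\ \hat{\mathcal{R}}(\theta_x,\gamma(\theta_t))\ \hat{\mathcal{R}}(\gamma(\theta_x),\gamma(\theta_t))\bigr),
\end{equation*}
which is precisely $\widetilde{\mathcal{R}}_{\mathrm{f}}(\theta_x,\theta_t)$.

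The main obstacle is purely bookkeeping: in the semi-coarsening case one had a two-mode target $\Psi_{L_x,L_t-1}(\theta_x,2\theta_t)$ because no spatial aliasing occurred, so $U_1,U_3$ collapsed to the $(\theta_x,2\theta_t)$-mode and $U_2,U_4$ to the $(\gamma(\theta_x),2\theta_t)$-mode. Here all four coefficients must be shown to collapse to the single mode $(2\theta_x,2\theta_t)$, which requires simultaneous aliasing in both variables; care is needed to verify that $\hat{\mathcal{R}}_x(\gamma(\theta_x))$ does not vanish identically or conflict with the spatial aliasing, but the identity $2\gamma(\theta_x)\equiv 2\theta_x \pmod{2\pi}$ takes care of this without any sign subtleties. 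Once this collapse is in place, the formula for $\widetilde{\mathcal{R}}_{\mathrm{f}}(\theta_x,\theta_t)$ reads off directly.
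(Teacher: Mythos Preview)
Your proposal is correct and follows essentially the same route as the paper: act on a single generator via the tensor-product factorisation, invoke Lemma~\ref{chap4_lemma25} for the spatial factor and Lemma~\ref{chap4_lemma12} for the temporal factor, then use the aliasing identities $2\gamma(\theta)\equiv 2\theta \pmod{2\pi}$ in both variables to collapse all four harmonics onto the single coarse mode $\vec{\Phi}^{L_x-1,L_t-1}(2\theta_x,2\theta_t)$. Your justification of the aliasing step is in fact slightly more explicit than the paper's, which simply records the relations without comment.
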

\begin{proof}
  For the frequencies $(\theta_x,\theta_t) \in \Theta_{L_x,L_t}^{\mathrm{low}}$
  let $\vec{\Phi}^{L_x,L_t}(\theta_x, \theta_t) \in \Psi_{L_x, L_t}(\theta_x,
  \theta_t)$ and $\vec{\Phi}^{L_x-1,L_t-1}(2\theta_x, 2\theta_t) \in \Psi_{L_x-1,
    L_t-1}(2\theta_x, 2\theta_t)$ be defined as in Lemma
  \ref{chap4_lemma15}. Then for $n=1,\ldots,N_{L_t-1}$ and $r =
  2,\ldots,N_{L_x-1}-1$ we have
  \begin{align*}
    \left( \mathcal{R}_{\mathrm{f}}^{L_x,L_t} \vec{\Phi}^{L_x,L_t}(\theta_x,
      \theta_t) \right)_{n,r} &= \sum_{s=1}^{N_{L_x}}\sum_{m=1}^{N_{L_t}}
    \mathcal{R}_{x}^{L_x}[r,s]
    \mathcal{R}^{L_t}[n,m]\vec{\Phi}_{m,s}^{L_x,L_t}(\theta_x,\theta_t)\\
      &= \left(\sum_{s=1}^{N_{L_x}}\mathcal{R}_{x}^{L_x}[r,s]
        \vec{\varphi}^{L_x}(\theta_x)[r]\right)\left( \sum_{m=1}^{N_{L_t}}
      \mathcal{R}^{L_t}[n,m]\vec{\Phi}_{m}^{L_t}(\theta_t)\right)\\
      &= \left(\mathcal{R}_x^{L_x}\vec{\varphi}^{L_x}(\theta_x)\right)[r] \left(
        \mathcal{R}^{L_t}\vec{\Phi}^{L_t}(\theta_t) \right)_n.\\
      \intertext{Applying Lemma \ref{chap4_lemma25} and Lemma
        \ref{chap4_lemma12} leads to}
      &= \hat{\mathcal{R}}_x(\theta_x)\hat{\mathcal{R}}(\theta_t)
      \vec{\Phi}_{n}^{L_t-1}(2\theta_t)
      \vec{\varphi}^{L_x-1}(2\theta_x)[r]\\
      &= \hat{\mathcal{R}}(\theta_x, \theta_t)
      \vec{\Phi}_{n,r}^{L_x-1,L_t-1}(2\theta_x,2\theta_t).
  \end{align*}
   Using this result for the vector $\vec{\psi}^{L_x,L_t}(\theta_x,\theta_t)
  \in
  \mathcal{E}_{L_x,L_t}(\theta_x,\theta_t)$ with $(\theta_x,\theta_t) \in
  \Theta_{L_x,L_t}^{\mathrm{f}}$ results in
  \begin{align*}
    \left( \mathcal{R}_{\mathrm{f}}^{L_x,L_t} \vec{\psi}^{L_x,L_t}(\theta_x,
      \theta_t) \right)_{n,r} &= \hat{\mathcal{R}}(\theta_x,\theta_t) U_1
    \vec{\Phi}_{n,r}^{L_x-1,L_t-1}(2\theta_x,2\theta_t)\\
    &\quad+ \hat{\mathcal{R}}(\gamma(\theta_x),\theta_t) U_2
    \vec{\Phi}_{n,r}^{L_x-1,L_t-1}(2\gamma(\theta_x),2\theta_t)\\
    &\quad+ \hat{\mathcal{R}}(\theta_x,\gamma(\theta_t)) U_3
    \vec{\Phi}_{n,r}^{L_x-1,L_t-1}(2\theta_x,2\gamma(\theta_t))\\
    &\quad+ \hat{\mathcal{R}}(\gamma(\theta_x),\gamma(\theta_t)) U_4
    \vec{\Phi}_{n,r}^{L_x-1,L_t-1}(2\gamma(\theta_x),2\gamma(\theta_t)).
  \end{align*}
  With the relations
  \begin{align*}
    \vec{\Phi}^{L_x-1,L_t-1}(2\theta_x,2\theta_t)
      &= \vec{\Phi}^{L_x-1,L_t-1}(2\gamma(\theta_x),2\theta_t) 
      = \vec{\Phi}_{n,r}^{L_x-1,L_t-1}(2\theta_x,2\gamma(\theta_t))\\
      &= \vec{\Phi}_{n,r}^{L_x-1,L_t-1}(2\gamma(\theta_x),2\gamma(\theta_t)),
  \end{align*}
  we obtain the statement of this lemma with
  \begin{align*}
    \left( \mathcal{R}_{\mathrm{f}}^{L_x,L_t} \vec{\psi}^{L_x,L_t}(\theta_x,
      \theta_t) \right)_{n,r} &= \big[  \hat{\mathcal{R}}(\theta_x,\theta_t)
    U_1
    +
       \hat{\mathcal{R}}(\gamma(\theta_x),\theta_t) U_2 
    + \hat{\mathcal{R}}(\theta_x,\gamma(\theta_t)) U_3\\
    &\qquad+ \hat{\mathcal{R}}(\gamma(\theta_x),\gamma(\theta_t)) U_4
    \big]\vec{\Phi}_{n,r}^{L_x-1,L_t-1}(2\theta_x,2\theta_t).
  \end{align*}
\end{proof}

With the next lemmas, we will analyze the mapping properties of the
prolongation operators $\mathcal{P}_{\mathrm{s}}^{L_x,L_t}$ and
$\mathcal{P}_{\mathrm{f}}^{L_x,L_t}$.
\begin{lemma}\label{chap4_lemma28}
  For $(\theta_x,\theta_t) \in \Theta_{L_x,L_t}^{\mathrm{f}}$ the prolongation
  operator $\mathcal{P}_{\mathrm{s}}^{L_x,L_t}$ satisfies the mapping
  property
  \begin{align*}
    \mathcal{P}_{\mathrm{s}}^{L_x,L_t} :
    \Psi_{L_x,L_t-1}(\theta_x,2\theta_t) \rightarrow
    \mathcal{E}_{L_x,L_t}(\theta_x,\theta_t)
  \end{align*}
  with the mapping
  \begin{align*}
    \begin{pmatrix} U_1\\U_2 \end{pmatrix} \mapsto
      \begin{pmatrix}
      \hat{\mathcal{P}}(\theta_t) & 0\\
      0 &\hat{\mathcal{P}}(\theta_t) \\
      \hat{\mathcal{P}}(\gamma(\theta_t)) & 0 \\
      0 & \hat{\mathcal{P}}(\gamma(\theta_t)) 
    \end{pmatrix} \begin{pmatrix}
      U_1\\U_2 \end{pmatrix}
    =:\widetilde{\mathcal{P}}_{\mathrm{s}}(\theta_t) &\begin{pmatrix}
      U_1\\U_2 \end{pmatrix}
  \end{align*}
  and the Fourier symbol $\hat{\mathcal{P}}(\theta_t) \in \IC^{N_t \times N_t}$
  defined as in Lemma \ref{chap4_lemma13}.
\end{lemma}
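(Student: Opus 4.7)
The plan is to mirror the approach of Lemma \ref{chap4_lemma26}, exploiting the tensor product structure $\mathcal{P}_{\mathrm{s}}^{L_x,L_t} = I_{N_{L_x}} \otimes \mathcal{P}^{L_t}$, which decouples the action into a trivial spatial part and the one-dimensional temporal prolongation analyzed in Lemma \ref{chap4_lemma13}.

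First I would check the action on a single Fourier basis vector. For $\vec{\Phi}^{L_x,L_t-1}(\theta_x, 2\theta_t) \in \Psi_{L_x,L_t-1}(\theta_x,2\theta_t)$, using the factorization $\vec{\Phi}^{L_x,L_t-1}_{n,r}(\theta_x,2\theta_t) = \vec{\Phi}^{L_t-1}_n(2\theta_t)\, \vec{\varphi}^{L_x}(\theta_x)[r]$ from Lemma \ref{chap4_lemma15}, the spatial index $r$ passes through the identity unchanged, while the temporal block reduces to $(\mathcal{P}^{L_t} \vec{\Phi}^{L_t-1}(2\theta_t))_n$. Lemma \ref{chap4_lemma13} then yields
\begin{align*}
\left(\mathcal{P}_{\mathrm{s}}^{L_x,L_t} \vec{\Phi}^{L_x,L_t-1}(\theta_x, 2\theta_t)\right)_{n,r}
= \hat{\mathcal{P}}(\theta_t)\,\vec{\Phi}^{L_x,L_t}_{n,r}(\theta_x,\theta_t)
+ \hat{\mathcal{P}}(\gamma(\theta_t))\,\vec{\Phi}^{L_x,L_t}_{n,r}(\theta_x,\gamma(\theta_t)).
\end{align*}

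Next I would extend this to a general element of $\Psi_{L_x,L_t-1}(\theta_x,2\theta_t)$ by linearity. Writing such an element as $U_1 \vec{\Phi}^{L_x,L_t-1}(\theta_x,2\theta_t) + U_2 \vec{\Phi}^{L_x,L_t-1}(\gamma(\theta_x),2\theta_t)$ and applying the identity above to both terms produces four contributions, one for each of the four harmonic frequencies $(\theta_x,\theta_t)$, $(\gamma(\theta_x),\theta_t)$, $(\theta_x,\gamma(\theta_t))$, $(\gamma(\theta_x),\gamma(\theta_t))$. Collecting these in order and reading off the coefficient matrix in $\mathbb{C}^{4N_t\times 2N_t}$ gives precisely the block structure $\widetilde{\mathcal{P}}_{\mathrm{s}}(\theta_t)$ stated in the lemma: the first and second rows carry $\hat{\mathcal{P}}(\theta_t)$ (acting on $U_1$, resp.\ $U_2$), the third and fourth rows carry $\hat{\mathcal{P}}(\gamma(\theta_t))$ (again on $U_1$, resp.\ $U_2$), with zero entries elsewhere because the spatial identity preserves each spatial frequency separately.

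I expect no serious obstacle here: unlike the restriction case of Lemma \ref{chap4_lemma27}, no two of the four output basis vectors coincide, so there is no merging of coefficients. The only mild subtlety is to verify the boundary indices $n = 1$ and $n = N_{L_t}$ are covered correctly under the periodic boundary assumption, but this is handled exactly as in Lemma \ref{chap4_lemma13} and causes no new work. The argument is therefore essentially a bookkeeping exercise around the tensor structure plus one invocation of Lemma \ref{chap4_lemma13}.
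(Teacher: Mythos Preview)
Your proposal is correct and follows essentially the same approach as the paper: exploit the tensor product structure $\mathcal{P}_{\mathrm{s}}^{L_x,L_t} = I_{N_{L_x}} \otimes \mathcal{P}^{L_t}$ so that the spatial index passes through unchanged, invoke Lemma~\ref{chap4_lemma13} for the temporal part, and then collect the four harmonic contributions from the linear combination $U_1 \vec{\Phi}^{L_x,L_t-1}(\theta_x,2\theta_t) + U_2 \vec{\Phi}^{L_x,L_t-1}(\gamma(\theta_x),2\theta_t)$. The only minor difference is that the paper works directly with the general element rather than first treating a single basis vector, and it does not need the periodic boundary assumption here (the prolongation acts correctly for all $n=1,\ldots,N_{L_t}$ without it), so your caution on that point is unnecessary but harmless.
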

\begin{proof}
  Let $\vec{\psi}^{L_x,L_t-1}(\theta_x,2\theta_t) \in
  \vec{\Psi}_{L_x,L_t-1}(\theta_x,2\theta_t)$ for $(\theta_x,\theta_t) \in
  \Theta_{L_x,L_t}^{\mathrm{f}}$. Then we have for $n=1,\ldots,N_{L_t}$ and $r
  = 1,\ldots, N_{L_x}$ that
  \begin{align*}
    \left( \mathcal{P}_{\mathrm{s}}^{L_x,L_t}
      \vec{\psi}^{L_x,L_t-1}(\theta_x,2\theta_t) \right)_{n,r} &=
    \sum_{s=1}^{N_{L_x}}\sum_{m=1}^{N_{L_t}}
    I_{N_{L_x}}[r,s]\mathcal{P}^{L_t}[n,m]
    \vec{\psi}_{m,s}^{L_x,L_t-1}(\theta_x,2\theta_t)\\
    &= \sum_{m=1}^{N_{L_t}}\mathcal{P}^{L_t}[n,m]
    \vec{\psi}_{m,r}^{L_x,L_t-1}(\theta_x,2\theta_t)\\
    &= \sum_{m=1}^{N_{L_t}}\mathcal{P}^{L_t}[n,m]
    \Big[(\vec{\varphi}^{L_x}(\theta_x)[r] U_1)
    \vec{\Phi}_{m}^{L_t-1}(2\theta_t) \\
    &\qquad\qquad\qquad\qquad+ (\vec{\varphi}^{L_x}(\gamma(\theta_x))[r] U_2)
    \vec{\Phi}_{m}^{L_t-1}(2\theta_t) \Big].
    \intertext{Since $\vec{\varphi}^{L_x}(\theta_x)[r] U_1 \in \IC^{N_t\times
        N_t}$ and $\vec{\varphi}^{L_x}(\gamma(\theta_x))[r] U_2 \in
      \IC^{N_t\times N_t}$ we further obtain by applying Lemma
      \ref{chap4_lemma13} that}
    &= \hat{\mathcal{P}}(\theta_t) (\vec{\varphi}^{L_x}(\theta_x)[r] U_1)
    \vec{\Phi}_{n}^{L_t}(\theta_t)\\
    &\quad+ \hat{\mathcal{P}}(\gamma(\theta_t))
    (\vec{\varphi}^{L_x}(\theta_x)[r]
    U_1)\vec{\Phi}_{n}^{L_t}(\gamma(\theta_t))\\
    &\quad+ \hat{\mathcal{P}}(\theta_t)
    (\vec{\varphi}^{L_x}(\gamma(\theta_x))[r] U_2)
    \vec{\Phi}_{n}^{L_t}(\theta_t)\\
    &\quad+  \hat{\mathcal{P}}(\gamma(\theta_t))
    (\vec{\varphi}^{L_x}(\gamma(\theta_x))[r]
    U_2)\vec{\Phi}_{n}^{L_t}(\gamma(\theta_t)).\\
    \intertext{With the definition of the Fourier mode
      $\vec{\Phi}_{n,r}^{L_x,L_t}(\theta_x,\theta_t)$ we get}
    &= \hat{\mathcal{P}}(\theta_t) U_1
    \vec{\Phi}_{n,r}^{L_x,L_t}(\theta_x,\theta_t) \\
    &\quad+ \hat{\mathcal{P}}(\gamma(\theta_t))
    U_1\vec{\Phi}_{n,r}^{L_x,L_t}(\theta_x,\gamma(\theta_t))\\
    &\quad+ \hat{\mathcal{P}}(\theta_t) U_2
    \vec{\Phi}_{n,r}^{L_x,L_t}(\gamma(\theta_x),\theta_t) \\
    &\quad+ \hat{\mathcal{P}}(\gamma(\theta_t))
    U_2\vec{\Phi}_{n,r}^{L_x,L_t}(\gamma(\theta_x),\gamma(\theta_t)),
  \end{align*}
  which completes the proof.
\end{proof}

\begin{lemma}\label{chap4_lemma29}
   With the assumptions of periodic boundary conditions
   (\ref{chap4_periodicBCST}) the following mapping
  property for the prolongation operator holds:
  \begin{align*}
    \mathcal{P}_{\mathrm{f}}^{L_x,L_t} :
    \Psi_{L_x-1,L_t-1}(2\theta_x,2\theta_t) \rightarrow
    \mathcal{E}_{L_x,L_t}(\theta_x,\theta_t)
  \end{align*}
  with the mapping
  \begin{align*}
     U \mapsto
      \left(\begin{array}{l}
      \hat{\mathcal{P}}(\theta_x,\theta_t)\\
      \hat{\mathcal{P}}(\gamma(\theta_x),\theta_t) \\
      \hat{\mathcal{P}}(\theta_x,\gamma(\theta_t)) \\
      \hat{\mathcal{P}}(\gamma(\theta_x),\gamma(\theta_t)) 
    \end{array}\right) U
    =:\widetilde{\mathcal{P}}_{\mathrm{f}}(\theta_x,\theta_t) U
  \end{align*}
  and the Fourier symbol
  \begin{align*}
    \hat{\mathcal{P}}(\theta_x,\theta_t) := \hat{\mathcal{P}}_x(\theta_x)
    \hat{\mathcal{P}}(\theta_t),
  \end{align*}
  where $\hat{\mathcal{P}}(\theta_t)$ is defined as in Lemma
  \ref{chap4_lemma13}.
\end{lemma}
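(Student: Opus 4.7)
The plan is to mirror the proof of Lemma \ref{chap4_lemma28} but now exploit the tensor-product structure of $\mathcal{P}_{\mathrm{f}}^{L_x,L_t} = \mathcal{P}_x^{L_x} \otimes \mathcal{P}^{L_t}$ in both space and time. First I would take an arbitrary $\vec{\psi}^{L_x-1,L_t-1}(2\theta_x,2\theta_t) \in \Psi_{L_x-1,L_t-1}(2\theta_x,2\theta_t)$ for frequencies $(\theta_x,\theta_t) \in \Theta_{L_x,L_t}^{\mathrm{low,f}}$, so that for each $n,r$ we have $\vec{\psi}_{n,r}^{L_x-1,L_t-1}(2\theta_x,2\theta_t) = U \vec{\Phi}_{n,r}^{L_x-1,L_t-1}(2\theta_x,2\theta_t) = U\,\vec{\Phi}_n^{L_t-1}(2\theta_t)\,\vec{\varphi}^{L_x-1}(2\theta_x)[r]$ by Lemma \ref{chap4_lemma15}.

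Then for $n = 1,\ldots,N_{L_t}$ and $r = 2,\ldots,N_{L_x}-1$ I would apply the operator $\mathcal{P}_{\mathrm{f}}^{L_x,L_t}$ and separate the double sum using the Kronecker product structure, obtaining
\begin{align*}
  \left(\mathcal{P}_{\mathrm{f}}^{L_x,L_t}\vec{\psi}^{L_x-1,L_t-1}(2\theta_x,2\theta_t)\right)_{n,r}
  &= \left(\mathcal{P}_x^{L_x} \vec{\varphi}^{L_x-1}(2\theta_x)\right)[r] \cdot
     U \left(\mathcal{P}^{L_t} \vec{\Phi}^{L_t-1}(2\theta_t)\right)_n.
\end{align*}
For the spatial factor I would invoke Lemma \ref{chap4_lemma25} to rewrite it as $\hat{\mathcal{P}}_x(\theta_x)\vec{\varphi}^{L_x}(\theta_x)[r] + \hat{\mathcal{P}}_x(\gamma(\theta_x))\vec{\varphi}^{L_x}(\gamma(\theta_x))[r]$, while for the temporal factor I would invoke Lemma \ref{chap4_lemma13} to write it as $\hat{\mathcal{P}}(\theta_t)\vec{\Phi}_n^{L_t}(\theta_t) + \hat{\mathcal{P}}(\gamma(\theta_t))\vec{\Phi}_n^{L_t}(\gamma(\theta_t))$. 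This uses the periodic boundary assumption (\ref{chap4_periodicBCST}) to ensure the symbol formulas hold at all interior and boundary indices uniformly.

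Multiplying these two decompositions out produces exactly four cross terms; I would then rebundle each product $\vec{\varphi}^{L_x}(\cdot)[r]\vec{\Phi}_n^{L_t}(\cdot)$ into the space–time mode $\vec{\Phi}_{n,r}^{L_x,L_t}(\cdot,\cdot)$ via Lemma \ref{chap4_lemma15}, yielding
\begin{align*}
  \left(\mathcal{P}_{\mathrm{f}}^{L_x,L_t}\vec{\psi}^{L_x-1,L_t-1}(2\theta_x,2\theta_t)\right)_{n,r}
  &= \hat{\mathcal{P}}(\theta_x,\theta_t)\,U\,\vec{\Phi}_{n,r}^{L_x,L_t}(\theta_x,\theta_t) \\
  &\quad + \hat{\mathcal{P}}(\gamma(\theta_x),\theta_t)\,U\,\vec{\Phi}_{n,r}^{L_x,L_t}(\gamma(\theta_x),\theta_t) \\
  &\quad + \hat{\mathcal{P}}(\theta_x,\gamma(\theta_t))\,U\,\vec{\Phi}_{n,r}^{L_x,L_t}(\theta_x,\gamma(\theta_t)) \\
  &\quad + \hat{\mathcal{P}}(\gamma(\theta_x),\gamma(\theta_t))\,U\,\vec{\Phi}_{n,r}^{L_x,L_t}(\gamma(\theta_x),\gamma(\theta_t)),
\end{align*}
with $\hat{\mathcal{P}}(\theta_x,\theta_t) := \hat{\mathcal{P}}_x(\theta_x)\hat{\mathcal{P}}(\theta_t)$. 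By definition this right-hand side lies in $\mathcal{E}_{L_x,L_t}(\theta_x,\theta_t)$ and its coefficient block is precisely the column vector $\widetilde{\mathcal{P}}_{\mathrm{f}}(\theta_x,\theta_t)\,U$, establishing the stated mapping.

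The main obstacle is not any deep estimate but rather the careful bookkeeping: I must be sure the spatial and temporal Fourier decompositions separate cleanly, that the scalar $\hat{\mathcal{P}}_x$ commutes through the matrix $U$ (which it does since $U \in \IC^{N_t\times N_t}$ acts only on the temporal index), and that the four mode pairs $(\theta_x,\theta_t)$, $(\gamma(\theta_x),\theta_t)$, $(\theta_x,\gamma(\theta_t))$, $(\gamma(\theta_x),\gamma(\theta_t))$ — generated by the two shifts one per coordinate — match exactly the four generators of $\mathcal{E}_{L_x,L_t}(\theta_x,\theta_t)$. Once the tensor-product symbol identity is applied to each factor, the result follows as in the semi-coarsening case of Lemma \ref{chap4_lemma28}.
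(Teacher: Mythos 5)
Your proof takes essentially the same route as the paper: separate $\mathcal{P}_{\mathrm{f}}^{L_x,L_t}=\mathcal{P}_x^{L_x}\otimes\mathcal{P}^{L_t}$ into spatial and temporal factors, apply Lemma \ref{chap4_lemma25} and Lemma \ref{chap4_lemma13}, expand the four cross terms, and rebundle into the harmonics. One step as written would fail, though. You factor the temporal contribution as $U\left(\mathcal{P}^{L_t}\vec{\Phi}^{L_t-1}(2\theta_t)\right)_n$, pulling $U$ to the \emph{left} of the temporal prolongation. But the block entries $\mathcal{P}^{L_t}[n,m]$ are the $N_t\times N_t$ matrices $R_1^\top,R_2^\top$, and $U$ is an $N_t\times N_t$ diagonal (generally non-scalar) matrix, so for $p_t\geq 1$ they do not commute. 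The correct factorization keeps $U$ in place:
\begin{align*}
\left(\mathcal{P}_{\mathrm{f}}^{L_x,L_t}\vec{\psi}\right)_{n,r}
=\left(\mathcal{P}_x^{L_x}\vec{\varphi}^{L_x-1}(2\theta_x)\right)[r]\cdot
\Bigl(\sum_{m}\mathcal{P}^{L_t}[n,m]\,U\,\vec{\Phi}_m^{L_t-1}(2\theta_t)\Bigr),
\end{align*}
and Lemma \ref{chap4_lemma13}, applied to the element of $\Psi_{L_t-1}(2\theta_t)$ with coefficient matrix $U$, then yields $\hat{\mathcal{P}}(\theta_t)\,U$ and $\hat{\mathcal{P}}(\gamma(\theta_t))\,U$, i.e.\ the symbol on the \emph{left}. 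If you carry your factorization through literally you instead arrive at $U\hat{\mathcal{P}}(\theta_t)$, the reversed order, which is not the claimed mapping $U\mapsto\widetilde{\mathcal{P}}_{\mathrm{f}}(\theta_x,\theta_t)U$. You correctly note that the scalar $\hat{\mathcal{P}}_x(\theta_x)$ commutes freely, but overlook that $\hat{\mathcal{P}}(\theta_t)$ is a matrix and does not; your final display happens to have the right order, but it does not follow from the factorization you wrote. Fixing that one step makes the proof match the paper's exactly.
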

\begin{proof}
   Let $\vec{\psi}^{L_x-1,L_t-1}(2\theta_x,2\theta_t) \in
  \vec{\Psi}_{L_x-1,L_t-1}(2\theta_x,2\theta_t)$ for $(\theta_x,\theta_t) \in
  \Theta_{L_x,L_t}^{\mathrm{f}}$. Then we have for $n=1,\ldots,N_{L_t}$ and $r
  = 2,\ldots, N_{L_x}-1$ that
  \begin{align*}
    &\left( \mathcal{P}_{\mathrm{f}}^{L_x,L_t}
      \vec{\psi}^{L_x-1,L_t-1}(2\theta_x,2\theta_t) \right)_{n,r} =
    \sum_{s=1}^{N_{L_x}}\sum_{m=1}^{N_{L_t}}
    \mathcal{P}_{x}^{L_x}[r,s]\mathcal{P}^{L_t}[n,m]
    \vec{\psi}_{m,s}^{L_x-1,L_t-1}(2\theta_x,2\theta_t)\\
    &\qquad= \left( \sum_{s=1}^{N_{L_x}} \mathcal{P}_{x}^{L_x}[r,s]
      \vec{\varphi}^{L_x-1}(2 \theta_x)[s] \right)\left(
      \sum_{m=1}^{N_{L_t}}\mathcal{P}^{L_t}[n,m]\vec{\Phi}_{m}^{L_t-1}(2\theta_t)
    \right).\\
    \intertext{Using Lemma \ref{chap4_lemma13} gives}
    &\qquad=\left( \hat{\mathcal{P}}_x(\theta_x)
      \vec{\varphi}^{L_x}(\theta_x)[r]+ \hat{\mathcal{P}}_x(\gamma(\theta_x))
      \vec{\varphi}^{L_x}(\gamma(\theta_x))[r] \right)\\
    &\qquad\qquad\qquad\times \left(
      \hat{\mathcal{P}}(\theta_t) U \vec{\Phi}_{n}^{L_t}(\theta_t)+
      \hat{\mathcal{P}}(\gamma(\theta_t)) U
      \vec{\Phi}_{n}^{L_t}(\gamma(\theta_t)) \right).\\
    \intertext{Using now the definition of the Fourier mode
      $\vec{\Phi}_{n,r}^{L_x,L_t}(\theta_x,\theta_t)$ leads to}
    &\qquad=\hat{\mathcal{P}}(\theta_x,\theta_t) U
    \vec{\Phi}_{n,r}^{L_x,L_t}(\theta_x,\theta_t)
    +\hat{\mathcal{P}}(\gamma(\theta_x),\theta_t) U
    \vec{\Phi}_{n,r}^{L_x,L_t}(\gamma(\theta_x),\theta_t) \\
    &\qquad\qquad+ \hat{\mathcal{P}}(\theta_x,\gamma(\theta_t))
    U\vec{\Phi}_{n,r}^{L_x,L_t}(\theta_x,\gamma(\theta_t)) +
    \hat{\mathcal{P}}(\gamma(\theta_x), \gamma(\theta_t))
    U\vec{\Phi}_{n,r}^{L_x,L_t}(\gamma(\theta_x),\gamma(\theta_t)),
  \end{align*}
  which completes the proof.
\end{proof}

For periodic boundary conditions (\ref{chap4_periodicBCST}), we
further obtain with Lemma \ref{chap4_lemma14} the mapping property for
the coarse grid correction, when semi coarsening in time is applied,
\begin{equation}\label{chap4_mappinginvLs}
\begin{aligned}
  \left( \mathcal{L}_{2 \tau_L, h_L} \right)^{-1} : \Psi_{L_x, L_t-1}(\theta_x,
  2 \theta_t) &\rightarrow \Psi_{L_x, L_t-1}(\theta_x,
  2 \theta_t)\\
  \begin{pmatrix} U_1 \\ U_2  \end{pmatrix} &\mapsto
  \left( \widetilde{\mathcal{L}}_{2\tau_L,
      h_L}^{\mathrm{s}}(\theta_x,2\theta_t) \right)^{-1} \begin{pmatrix} U_1 \\
    U_2  \end{pmatrix} \in \IC^{2 N_t \times N_t}
\end{aligned}
\end{equation}
with the matrix
\begin{align*}
  \left( \widetilde{\mathcal{L}}_{2\tau_L,
      h_L}^{\mathrm{s}}(\theta_x,2\theta_t) \right)^{-1} := \begin{pmatrix} 
    \left(\hat{\mathcal{L}}_{2\tau_L, h_L}(\theta_x, 2\theta_t)\right)^{-1} & 0 \\ 0 &
      \left(\hat{\mathcal{L}}_{2\tau_L, h_L}(\gamma(\theta_x),\theta_t)\right)^{-1}
  \end{pmatrix} \in \IC^{2 N_t \times 2N_t}.
\end{align*}
For full space-time coarsening, we have the mapping property
\begin{equation}\label{chap4_mappinginvLf}
\begin{aligned}
  \left( \mathcal{L}_{2 \tau_L, 2h_L} \right)^{-1} : \Psi_{L_x-1, L_t-1}(2\theta_x,
  2 \theta_t) &\rightarrow \Psi_{L_x-1, L_t-1}(2\theta_x,
  2 \theta_t)\\
  U &\mapsto
  \left( \widetilde{\mathcal{L}}_{2\tau_L,
      2h_L}^{\mathrm{f}}(2\theta_x,2\theta_t) \right)^{-1} U \in \IC^{N_t \times
  N_t},
\end{aligned}
\end{equation}
with
$\left( \widetilde{\mathcal{L}}_{2\tau_L,
      2h_L}^{\mathrm{f}}(2\theta_x,2\theta_t) \right)^{-1} :=
  \left(\hat{\mathcal{L}}_{2\tau_L, 2h_L}(2\theta_x, 2\theta_t)\right)^{-1}
  \in \IC^{N_t \times N_t}$.
We can now prove the following two theorems:
\begin{theorem}\label{cahp4_theorem5}
  Let $(\theta_x,\theta_t) \in
  \Theta_{L_x,L_t}^{\mathrm{low},\mathrm{f}}$. With the assumption
  of periodic boundary conditions (\ref{chap4_periodicBCST}), the
  following mapping property holds for the two-grid operator
  $\mathcal{M}_{\tau_L,h_L}^{\mathrm{s}}$ with semi
  coarsening in time:
  \begin{align*}
    \mathcal{M}_{\tau_L, h_L}^{\mathrm{s}} : \mathcal{E}_{L_x,L_t}(\theta_x,
    \theta_t) \rightarrow \mathcal{E}_{L_x,L_t}(\theta_x, \theta_t),
  \end{align*}
  with the mapping
  \begin{align*}
    \begin{pmatrix} U_1\\U_2\\U_3\\U_4 \end{pmatrix} \mapsto
    \widetilde{\mathcal{M}}_\mu^{\mathrm{s}}(\theta_k,\theta_t) \begin{pmatrix}
      U_1\\U_2\\U_3\\U_4 \end{pmatrix}
  \end{align*}
  and the iteration matrix
  \begin{align*}
    \widetilde{\mathcal{M}}_\mu^{\mathrm{s}}(\theta_k,\theta_t) :=
    \left(\widetilde{\mathcal{S}}_{\tau_L,h_L}(\theta_x,\theta_t)\right)^{\nu_2}
    \widetilde{\mathcal{K}}_{\mathrm{s}}(\theta_x,\theta_t)
    \left(\widetilde{\mathcal{S}}_{\tau_L,h_L}(\theta_x,\theta_t)\right)^{\nu_1}
    \in \IC^{4 N_t \times 4 N_t}
  \end{align*}
  with
  \begin{align*}
    \widetilde{\mathcal{K}}_{\mathrm{s}}(\theta_x,\theta_t) := I_{4 N_t} -
    \widetilde{\mathcal{P}}_{\mathrm{s}}(\theta_t)  \left(
      \widetilde{\mathcal{L}}_{2\tau_L,h_L}^{\mathrm{s}}(\theta_x,2\theta_t)
    \right)^{-1} \widetilde{\mathcal{R}}_{\mathrm{s}}(\theta_t)
    \widetilde{\mathcal{L}}_{\tau_L,h_L}(\theta_x,\theta_t).
  \end{align*}
\end{theorem}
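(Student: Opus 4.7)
The plan is to trace a generic element of $\mathcal{E}_{L_x,L_t}(\theta_x,\theta_t)$ through the five factors making up $\mathcal{M}_{\tau_L,h_L}^{\mathrm{s}}$, invoking the mapping properties already established in the smoothing analysis and in Lemmas \ref{chap4_lemma26} and \ref{chap4_lemma28}. Because each factor preserves or maps between spaces whose Fourier description is known, the composition is a straightforward matrix product at the symbol level, so the theorem will reduce to bookkeeping of the four coefficient blocks $(U_1,U_2,U_3,U_4)$.

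First I would fix $\vec{\psi}^{L_x,L_t}(\theta_x,\theta_t)\in\mathcal{E}_{L_x,L_t}(\theta_x,\theta_t)$ with coefficient blocks $(U_1,U_2,U_3,U_4)$ and apply $\mathcal{S}_{\tau_L,h_L}^{\nu_1}$. By the mapping property (\ref{chap4_mappingS}) this yields a vector in the same space of harmonics with new coefficients $(\widetilde{\mathcal{S}}_{\tau_L,h_L}(\theta_x,\theta_t))^{\nu_1}(U_1,U_2,U_3,U_4)^\top$. Then $\mathcal{L}_{\tau_L,h_L}$, by (\ref{chap4_mappingL}), keeps the vector in $\mathcal{E}_{L_x,L_t}(\theta_x,\theta_t)$ and multiplies the block vector by $\widetilde{\mathcal{L}}_{\tau_L,h_L}(\theta_x,\theta_t)$.

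Next I would apply $\mathcal{R}_{\mathrm{s}}^{L_x,L_t}$. Lemma \ref{chap4_lemma26} collapses the four coefficient blocks into two, producing an element of $\Psi_{L_x,L_t-1}(\theta_x,2\theta_t)$ whose new block vector is $\widetilde{\mathcal{R}}_{\mathrm{s}}(\theta_t)$ applied to the current four blocks. The inverse coarse operator then acts within $\Psi_{L_x,L_t-1}(\theta_x,2\theta_t)$ by (\ref{chap4_mappinginvLs}) as block multiplication by $(\widetilde{\mathcal{L}}_{2\tau_L,h_L}^{\mathrm{s}}(\theta_x,2\theta_t))^{-1}$. Prolongating with $\mathcal{P}_{\mathrm{s}}^{L_x,L_t}$, Lemma \ref{chap4_lemma28} returns the result back to $\mathcal{E}_{L_x,L_t}(\theta_x,\theta_t)$ with the two-block input expanded into four blocks via $\widetilde{\mathcal{P}}_{\mathrm{s}}(\theta_t)$. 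Subtracting the coarse correction from the pre-correction error and applying $\mathcal{S}_{\tau_L,h_L}^{\nu_2}$ once more (again by (\ref{chap4_mappingS})) completes the composition and produces exactly the stated $\widetilde{\mathcal{M}}_\mu^{\mathrm{s}}(\theta_x,\theta_t)$.

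The only subtlety I expect is ensuring that the four-block algebra composes consistently: in particular, verifying that the ranges of $\widetilde{\mathcal{R}}_{\mathrm{s}}$ and $\widetilde{\mathcal{P}}_{\mathrm{s}}$ match the block structure of $(\widetilde{\mathcal{L}}_{2\tau_L,h_L}^{\mathrm{s}})^{-1}$, and that $\widetilde{\mathcal{S}}_{\tau_L,h_L}$ and $\widetilde{\mathcal{L}}_{\tau_L,h_L}$, which are block-diagonal over the four harmonic frequencies $(\theta_x,\theta_t),(\gamma(\theta_x),\theta_t),(\theta_x,\gamma(\theta_t)),(\gamma(\theta_x),\gamma(\theta_t))$, commute properly with the non-diagonal restriction/prolongation blocks. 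The periodic boundary condition assumption (\ref{chap4_periodicBCST}) is essential here because it extends the symbol identities of Lemmas \ref{chap4_lemma17}, \ref{chap4_lemma18}, \ref{chap4_lemma26}, \ref{chap4_lemma28} from the interior indices to all of $n=1,\ldots,N_{L_t}$ and $r=1,\ldots,N_{L_x}$, so no boundary corrections appear. Beyond this bookkeeping, the result is purely a matter of reading off the composition, so no deeper estimate is required.
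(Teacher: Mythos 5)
Your proof is correct and follows exactly the same route as the paper's own (very terse) proof: compose the factors of $\mathcal{M}_{\tau_L,h_L}^{\mathrm{s}}$ in sequence and invoke Lemma~\ref{chap4_lemma26}, Lemma~\ref{chap4_lemma28}, and the mapping properties (\ref{chap4_mappingL}), (\ref{chap4_mappingS}), and (\ref{chap4_mappinginvLs}) at the corresponding stages. Your elaboration on why the block dimensions of $\widetilde{\mathcal{R}}_{\mathrm{s}}$, $(\widetilde{\mathcal{L}}_{2\tau_L,h_L}^{\mathrm{s}})^{-1}$, and $\widetilde{\mathcal{P}}_{\mathrm{s}}$ compose consistently, and why periodicity removes boundary corrections, fills in detail the paper leaves implicit but does not change the argument.
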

\begin{proof}
  The statement of this theorem follows by using Lemma \ref{chap4_lemma26},
  Lemma \ref{chap4_lemma28} and the mapping properties (\ref{chap4_mappingL}),
  (\ref{chap4_mappingS}) and (\ref{chap4_mappinginvLs}).
\end{proof}

\begin{theorem}\label{cahp4_theorem6}
  Let $(\theta_x,\theta_t) \in
  \Theta_{L_x,L_t}^{\mathrm{low},\mathrm{f}}$. With the
  assumption of periodic boundary conditions
  (\ref{chap4_periodicBCST}), the following mapping property
  holds for the two-grid operator
  $\mathcal{M}_{\tau_L,h_L}^{\mathrm{f}}$ with full space-time
  coarsening:
  \begin{align*}
    \mathcal{M}_{\tau_L, h_L}^{\mathrm{f}} : \mathcal{E}_{L_x,L_t}(\theta_x,
    \theta_t) \rightarrow \mathcal{E}_{L_x,L_t}(\theta_x, \theta_t),
  \end{align*}
  with the mapping
  \begin{align*}
    \begin{pmatrix} U_1\\U_2\\U_3\\U_4 \end{pmatrix} \mapsto
    \widetilde{\mathcal{M}}_\mu^{\mathrm{f}}(\theta_k,\theta_t) \begin{pmatrix}
      U_1\\U_2\\U_3\\U_4 \end{pmatrix}
  \end{align*}
  and the iteration matrix
  \begin{align*}
    \widetilde{\mathcal{M}}_\mu^{\mathrm{f}}(\theta_k,\theta_t) :=
    \left(\widetilde{\mathcal{S}}_{\tau_L,h_L}(\theta_x,\theta_t)\right)^{\nu_2}
    \widetilde{\mathcal{K}}_{\mathrm{f}}(\theta_x,\theta_t)
    \left(\widetilde{\mathcal{S}}_{\tau_L,h_L}(\theta_x,\theta_t)\right)^{\nu_1}
    \in \IC^{4 N_t \times 4 N_t}
  \end{align*}
  with
  \begin{align*}
    \widetilde{\mathcal{K}}_{\mathrm{f}}(\theta_x,\theta_t) := I_{4 N_t} -
    \widetilde{\mathcal{P}}_{\mathrm{f}}(\theta_x,\theta_t)  \left(
      \widetilde{\mathcal{L}}_{2\tau_L,2h_L}^{\mathrm{f}}(2\theta_x,2\theta_t)
    \right)^{-1} \widetilde{\mathcal{R}}_{\mathrm{f}}(\theta_x,\theta_t)
    \widetilde{\mathcal{L}}_{\tau_L,h_L}(\theta_x,\theta_t).
  \end{align*}
\end{theorem}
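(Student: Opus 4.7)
The plan is to mirror the proof of Theorem \ref{cahp4_theorem5} step by step, simply substituting the full space--time coarsening ingredients (Lemmas \ref{chap4_lemma27} and \ref{chap4_lemma29}, together with the mapping property (\ref{chap4_mappinginvLf})) for the semi--coarsening ones. The statement to verify is that the composition
\begin{equation*}
\mathcal{M}_{\tau_L,h_L}^{\mathrm{f}} = \mathcal{S}_{\tau_L,h_L}^{\nu_2}\bigl[I - \mathcal{P}_{\mathrm{f}}^{L_x,L_t}(\mathcal{L}_{2\tau_L,2h_L})^{-1}\mathcal{R}_{\mathrm{f}}^{L_x,L_t}\mathcal{L}_{\tau_L,h_L}\bigr]\mathcal{S}_{\tau_L,h_L}^{\nu_1}
\end{equation*}
leaves $\mathcal{E}_{L_x,L_t}(\theta_x,\theta_t)$ invariant for any low frequency pair in $\Theta_{L_x,L_t}^{\mathrm{low},\mathrm{f}}$, and to identify its Fourier symbol as the stated $4N_t\times 4N_t$ block.

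First I would take a generic element $\vec{\psi}^{L_x,L_t}(\theta_x,\theta_t)\in\mathcal{E}_{L_x,L_t}(\theta_x,\theta_t)$ represented by the coefficient tuple $(U_1,U_2,U_3,U_4)^\top$ and push it through the pipeline in order. Applying $\mathcal{S}_{\tau_L,h_L}^{\nu_1}$ keeps us inside $\mathcal{E}_{L_x,L_t}(\theta_x,\theta_t)$ by (\ref{chap4_mappingS}) and produces the tuple $(\widetilde{\mathcal{S}}_{\tau_L,h_L}(\theta_x,\theta_t))^{\nu_1}(U_1,U_2,U_3,U_4)^\top$, since $\widetilde{\mathcal{S}}_{\tau_L,h_L}$ is block diagonal. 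Next, $\mathcal{L}_{\tau_L,h_L}$ acts by $\widetilde{\mathcal{L}}_{\tau_L,h_L}(\theta_x,\theta_t)$ according to (\ref{chap4_mappingL}), again remaining block diagonal on the harmonic space.

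The crucial step is the coarse grid correction. Lemma \ref{chap4_lemma27} tells us that $\mathcal{R}_{\mathrm{f}}^{L_x,L_t}$ collapses the four harmonic components into a single coarse Fourier mode in $\Psi_{L_x-1,L_t-1}(2\theta_x,2\theta_t)$ via the $N_t\times 4N_t$ matrix $\widetilde{\mathcal{R}}_{\mathrm{f}}(\theta_x,\theta_t)$. The inverse coarse operator then acts on this space by the $N_t\times N_t$ block $(\widetilde{\mathcal{L}}_{2\tau_L,2h_L}^{\mathrm{f}}(2\theta_x,2\theta_t))^{-1}$ per (\ref{chap4_mappinginvLf}), and Lemma \ref{chap4_lemma29} shows that $\mathcal{P}_{\mathrm{f}}^{L_x,L_t}$ redistributes the single coarse mode back across all four harmonic components with the $4N_t\times N_t$ stacked symbol $\widetilde{\mathcal{P}}_{\mathrm{f}}(\theta_x,\theta_t)$. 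Composing these three gives a map on the harmonic space with symbol $\widetilde{\mathcal{P}}_{\mathrm{f}}(\theta_x,\theta_t)(\widetilde{\mathcal{L}}_{2\tau_L,2h_L}^{\mathrm{f}}(2\theta_x,2\theta_t))^{-1}\widetilde{\mathcal{R}}_{\mathrm{f}}(\theta_x,\theta_t)\widetilde{\mathcal{L}}_{\tau_L,h_L}(\theta_x,\theta_t)$, so that subtracting from $I_{4N_t}$ yields exactly $\widetilde{\mathcal{K}}_{\mathrm{f}}(\theta_x,\theta_t)$. A final post--smoothing application of (\ref{chap4_mappingS}) produces the factor $(\widetilde{\mathcal{S}}_{\tau_L,h_L}(\theta_x,\theta_t))^{\nu_2}$ on the left, giving the claimed formula.

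The only subtle point, and the main thing to check carefully, is that the dimensions and frequency labels line up: the fine harmonic space has four $N_t\times N_t$ coefficient blocks, while the coarse space $\Psi_{L_x-1,L_t-1}(2\theta_x,2\theta_t)$ carries a single $N_t\times N_t$ block (because $\beta$ in Lemma \ref{chap4_lemma14} identifies all four shifted fine frequencies $(\theta_x,\theta_t),(\gamma(\theta_x),\theta_t),(\theta_x,\gamma(\theta_t)),(\gamma(\theta_x),\gamma(\theta_t))$ with the same coarse pair $(2\theta_x,2\theta_t)$). This is precisely the content of Lemmas \ref{chap4_lemma27} and \ref{chap4_lemma29}, so no additional computation is needed beyond invoking them; the proof is essentially one line of composition after the preparatory lemmas are in place.
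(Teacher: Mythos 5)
Your proposal is correct and follows exactly the approach the paper takes: the paper's own proof simply cites Lemmas \ref{chap4_lemma27} and \ref{chap4_lemma29} together with the mapping properties (\ref{chap4_mappingL}), (\ref{chap4_mappingS}), and (\ref{chap4_mappinginvLf}), and your step-by-step composition of the Fourier symbols is precisely what those citations encode. One minor attribution slip: the collapse of the four shifted fine harmonics onto the single coarse mode $(2\theta_x,2\theta_t)$ is established inside the proof of Lemma \ref{chap4_lemma27} via the relations $\vec{\Phi}^{L_x-1,L_t-1}(2\theta_x,2\theta_t)=\vec{\Phi}^{L_x-1,L_t-1}(2\gamma(\theta_x),2\theta_t)=\cdots$, whereas Lemma \ref{chap4_lemma14} only states injectivity of $\theta\mapsto 2\theta$; this does not affect the correctness of your argument.
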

\begin{proof}
 The statement of this theorem is a direct consequence of Lemma
 \ref{chap4_lemma27},
  Lemma \ref{chap4_lemma29} and the mapping properties (\ref{chap4_mappingL}),
  (\ref{chap4_mappingS}) and (\ref{chap4_mappinginvLf}).
\end{proof}

In view of Lemma \ref{chap4_lemma24} we can now represent the initial error
$\vec{e}^0 = \vec{u}-\vec{u}^0$ as 
\begin{align*}
  \vec{e}^0 &= \sum_{(\theta_x,\theta_t) \in \Theta_{L_x,L_t}^{\mathrm{low,f}} } \big[
    \vec{\psi}^{L_x,L_t}(\theta_x,\theta_t) +
    \vec{\psi}^{L_x,L_t}(\gamma(\theta_x),\theta_t) \\
    &\qquad\qquad\qquad\qquad +\vec{\psi}^{L_x,L_t}(\theta_x,\gamma(\theta_t)) +
    \vec{\psi}^{L_x,L_t}(\gamma(\theta_x),\gamma(\theta_t)) \big] \\
    &=:\sum_{(\theta_x,\theta_t) \in \Theta_{L_x,L_t}^{\mathrm{low,f}} }
    \tilde{\vec{\psi}}(\theta_x,\theta_t),
\end{align*}
with $\tilde{\vec{\psi}}(\theta_x,\theta_t) \in
\mathcal{E}_{L_x,L_t}(\theta_x,\theta_t)$ for all $(\theta_x,\theta_t)
\in \Theta_{L_x,L_t}^{\mathrm{low,f}}$. Using Theorem
\ref{cahp4_theorem5} and Theorem \ref{cahp4_theorem6} we now can
analyze the asymptotic convergence behavior of the two-grid cycle by
simply computing the largest spectral radius of
$\widetilde{\mathcal{M}}_\mu^{\mathrm{s}}(\theta_k,\theta_t)$ or
$\widetilde{\mathcal{M}}_\mu^{\mathrm{f}}(\theta_k,\theta_t)$ with
respect to the frequencies $(\theta_x,\theta_t) \in
\Theta_{L_x,L_t}^{\mathrm{low,f}}$. This motivates
\begin{definition}[Asymptotic two-grid convergence factors]
  For the two-grid iteration matrices $\mathcal{M}_{\tau_L, h_L}^{\mathrm{s}}$
  and $\mathcal{M}_{\tau_L, h_L}^{\mathrm{f}}$ we define the asymptotic
  convergence factors
  \begin{align*}
    \varrho(\hat{\mathcal{M}}_\mu^\mathrm{s}) &:= \max \left\{
      \varrho(\widetilde{\mathcal{M}}_\mu^{\mathrm{s}}(\theta_k,\theta_t)):
      (\theta_x,\theta_t) \in \Theta_{L_x,L_t}^{\mathrm{low},\mathrm{f}} \text{
      with } \theta_x \neq 0\right\},\\
      \varrho(\hat{\mathcal{M}}_\mu^\mathrm{f}) &:= \max \left\{
      \varrho(\widetilde{\mathcal{M}}_\mu^{\mathrm{f}}(\theta_k,\theta_t)):
      (\theta_x,\theta_t) \in \Theta_{L_x,L_t}^{\mathrm{low},\mathrm{f}} \text{
      with } \theta_x \neq 0\right\}.
  \end{align*}
\end{definition}

Note that in the definition of the two-grid convergence factors we
have neglected all frequencies $(0,\theta_t) \in
\Theta_{L_x,L_t}^{\mathrm{low},\mathrm{f}}$, since the Fourier symbol
with respect to the Laplacian is zero for $\theta_x = 0$, see also the
remarks in \cite[chapter 4]{Trottenberg2001}.

To derive the assymptotic convergence factors
$\varrho(\hat{\mathcal{M}}_\mu^\mathrm{s})$ and $
\varrho(\hat{\mathcal{M}}_\mu^\mathrm{f})$ for a given discretization
parameter $\mu \in \IR_+$ and a given polynomial degree $p_t \in
\IN_0$ we have to compute the eigenvalues of
\begin{equation}\label{chap4_twogridIterationMatrices}
\widetilde{\mathcal{M}}_\mu^{\mathrm{s}}(\theta_k,\theta_t) \in \IC^{4N_t \times
4N_t} \quad \text{and} \quad
\widetilde{\mathcal{M}}_\mu^{\mathrm{f}}(\theta_k,\theta_t) \in
\IC^{4N_t \times 4N_t},
\end{equation}
with $N_t = p_t +1$ for each low frequency $(\theta_x,\theta_t) \in
\Theta_{L_x,L_t}^{\mathrm{low},\mathrm{f}}$. Since it is difficult to find 
closed form expressions for the eigenvalues of the iteration
matrices (\ref{chap4_twogridIterationMatrices}), we will compute
the eigenvalues numerically. In particular we will compute the average
convergence factors for the domain $\Omega = (0,1)$ with a decomposition into
$1024$ uniform sub intervals, i.e. $N_{L_x} = 1023$. Furthermore we will
analyze the two-grid cycles for $N_{L_t} = 256$ time steps.

We plot as solid lines in the Figures
\ref{chap4PlotSemiP0}--\ref{chap4PlotSemiP1} the theoretical
convergence factors $\varrho(\hat{\mathcal{M}}_\mu^\mathrm{s})$ as
functions of the discretization parameter $\mu = \tau_L h_L^{-2} \in
[10^{-6},10^6]$ for different polynomial degrees $p_t \in \left\{0,1
\right\}$, and different number of smoothing steps $\nu_1 = \nu_2 =
\nu \in \{1,2,5\}$. We observe that the theoretical convergence
factors are always bounded by
$\varrho(\hat{\mathcal{M}}_\mu^\mathrm{s})\leq \frac{1}{2}$. For the
case when semi coarsening in time is applied we see that the two-grid
cycle converges for any discretization parameter $\mu$. We also see
for polynomial degree $p_t = 1$ that the theoretical convergence
factors are much smaller than the theoretical convergence factors for
the lowest order case $p_t = 0$.

We also plot in Figures \ref{chap4PlotSemiP0}--\ref{chap4PlotSemiP1}
using dots, triangles and squares the numerically measured convergence
factors for solving the equation
\[ \mathcal{L}_{\tau_L,h_L} \vec{u} = \vec{f} \]
with the two-grid cycle when semi coarsening in time is applied. For
the numerical test we use a zero right hand side, i.e. $\vec{f} =
\mathbf{0}$ and a random initial vector $\vec{u}^0$ with values
between zero and one. The convergence factor of the two-grid cycle is
measured by
\begin{align*}
  \max_{k = 1,\ldots,N_{\mathrm{iter}}}
  \frac{\norm{\vec{r}^{k+1}}_2}{\norm{\vec{r}^{k}}_2}, \quad \text{with
  }\vec{r}^k := \vec{f} - \mathcal{L}_{\tau_L, h_L} \vec{u}^k,
\end{align*}
where $N_{\mathrm{iter}} \in \IN$, $N_{\mathrm{iter}} \leq 250$ is the
number of two-grid iterations used until we have reached a given
relative error reduction of $\varepsilon_{\mathrm{MG}} = 10^{-140}$.
We observe that the numerical results agree very well with
the theoretical results, even though the local Fourier mode analysis
is not rigorous for the numerical simulation that does not use
periodic boundary conditions.
\begin{figure}
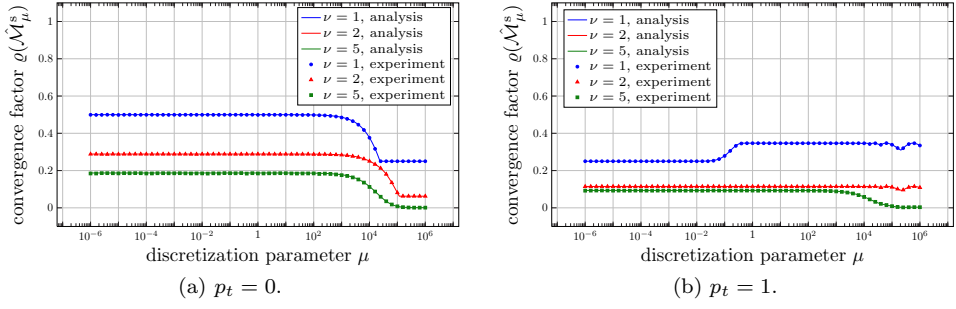

    \centering
    \subfloat[$p_t = 0$.]
    {
      \scalebox{0.43}{\input{./figures/plots/plotSTsemi0.tex}}
      \label{chap4PlotSemiP0}
    }\hfill
    \subfloat[$p_t = 1$.]
    {
      \scalebox{0.43}{\input{./figures/plots/plotSTsemi1.tex}}
      \label{chap4PlotSemiP1}
    }
    \caption{Asymptotic convergence factor
      $\varrho(\hat{\mathcal{M}}_{\mu}^\mathrm{s})$ for different
      discretization parameters $\mu$ and numerical convergence
      factors for $N_t = 256$ time steps and $N_x = 1023$.}
  \end{figure}

In Figures \ref{chap4PlotFullP0}--\ref{chap4PlotFullP1} we plot the theoretical
convergence factors $\varrho(\hat{\mathcal{M}}_\mu^\mathrm{s})$ for the
two-grid cycle $\mathcal{M}_{\tau_L,h_L}^{\mathrm{f}}$ with full
space-time coarsening as function of the discretization parameter
$\mu \in [10^{-6},10^6]$ for different polynomial degrees $p_t \in
\{0,1\}$. We observe that the theoretical convergence factors are bounded by
$\varrho(\hat{\mathcal{M}}_\mu^\mathrm{f})\leq \frac{1}{2}$ if the
discretization parameter $\mu$ is large enough, i.e. for $\mu \geq \mu^\ast$. In
Remark \ref{chap4_remarkCriticalDiscretizationParameter} we already
computed these critical values $\mu^\ast$ for several polynomial degrees
$p_t$. As before we compared the theoretical results with the numerical
results when full space-time coarsening is applied. In Figures
\ref{chap4PlotFullP0}--\ref{chap4PlotFullP1} the measured numerical convergence
factors are plotted as dots, triangles and squares. We see that the
theoretical results agree very well with the numerical results.

Overall we conclude that the two-grid cycle always converges to the
exact solution of the linear system (\ref{chap4_generalLinearSystem})
when semi coarsening in time is applied. Furthermore, if the
discretization parameter $\mu$ is large enough, we can also apply full
space-time coarsening, which leads to a smaller coarse problem
compared to the semi coarsening case.

  \begin{figure}
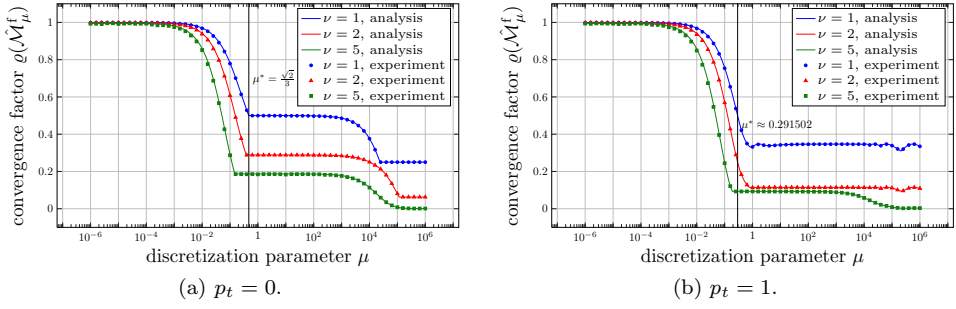

    \centering
    \subfloat[$p_t = 0$.]
    {
      \scalebox{0.43}{\input{./figures/plots/plotSTfull0.tex}}
      \label{chap4PlotFullP0}
    }\hfill
    \subfloat[$p_t = 1$.]
    {
      \scalebox{0.43}{\input{./figures/plots/plotSTfull1.tex}}
      \label{chap4PlotFullP1}
    }
    \caption{Average convergence factor
      $\varrho(\hat{\mathcal{M}}_{\mu}^\mathrm{f})$ for different
      discretization parameters $\mu$ and numerical convergence
      factors for $N_t = 256$ time steps and $N_x = 1023$.}
  \end{figure}

\begin{remark}\label{V-cycleApproxSpace}
  For the two-grid analysis above we used for the block Jacobi smoother 
  \begin{align}\label{chap4_smootherExact}
    \mathcal{S}_{\tau_L,h_L}^\nu = \left[I - \omega_t
    (D_{\tau_L,h_L})^{-1} \mathcal{L}_{\tau_L,h_L} \right]^\nu
  \end{align}
  the exact inverse of the diagonal matrix $D_{\tau_L,h_L} =
  \mathrm{diag}\left\{ A_{\tau_L,h_L} \right\}_{n=1}^{N_{L_t}}$. In
  practice, it is more efficient to use an approximation
  $\widetilde{D}_{\tau_L,h_L}^{-1}$ by applying one multigrid
  iteration in space for the blocks $A_{\tau_L,h_L}$, see also
    \cite{Wathen}, where such an approximate block Jacobi method is
    used directly to precondition GMRES. Hence the smoother
  (\ref{chap4_smootherExact}) changes to
  \begin{align}\label{chap4_smootherApprox}
    \overline{\mathcal{S}}_{\tau_L,h_L}^\nu := \left[I - \omega_t
    \left( I - \mathcal{M}_{\tau_L,h_L}  \right)(D_{\tau_L,h_L})^{-1}
    \mathcal{L}_{\tau_L,h_L} \right]^\nu,
  \end{align}
  with the matrix $\mathcal{M}_{\tau_L,h_L} := \mathrm{diag}\left\{
    \mathcal{M}_{\tau_L,h_L}^x\right\}_{n=1}^{N_{L_t}}$, where
  $\mathcal{M}_{\tau_L,h_L}^x$ is the iteration matrix of the
  multigrid scheme for the matrix $A_{\tau_L,h_L}$. In the case
  that the iteration matrix $\mathcal{M}_{\tau_L,h_L}^x$ is given by a two-grid
  cycle, we further obtain the representation
  \begin{align*}
    \mathcal{M}_{\tau_L,h_L}^x = \mathcal{S}_{\tau_L,h_L}^{x,\nu_2^x} \left[ I -
      \overline{\mathcal{P}}_x^{L_x} A_{\tau_L,2h_L}^{-1}
      \overline{\mathcal{R}}_x^{L_x}A_{\tau_L,h_L} 
    \right] \mathcal{S}_{\tau_L,h_L}^{x,\nu_1^x},
  \end{align*}
  with a damped Jacobi smoother in space 
  \begin{align*}
    \mathcal{S}_{\tau_L,h_L}^{x,\nu^x} := \left[I - \omega_x
    \left(D_{\tau_L,h_L}^x\right)^{-1} A_{\tau_L,h_L}\right]^{\nu^x}, \quad D_{\tau_L,h_L}^x
    := \mathrm{diag}\big\{ \frac{2h}{3} K_{\tau_L} + \frac{2}{h} M_{\tau_L}
    \big\}_{r=1}^{N_{L_x}}
  \end{align*}
  and the restriction and prolongation operators
  \begin{align*}
    \overline{\mathcal{R}}_x^{L_x} := \mathcal{R}_x^{L_x} \otimes I_{N_t}
    \quad \text{and} \quad \overline{\mathcal{P}}_x^{L_x} :=
    \mathcal{P}_x^{L_x} \otimes I_{N_t}.
  \end{align*}
  With the different smoother (\ref{chap4_smootherApprox}) we also have to
  analyze the two different two-grid iteration matrices
  \begin{align}\label{chap4_twoGridIterationMatrixApprox}
    \overline{\mathcal{M}}_{\tau_L,h_L}^{\mathrm{s}} &:=
    \overline{\mathcal{S}}_{\tau_L,h_L}^{\nu_2} \left[I -
      \mathcal{P}_{\mathrm{s}}^{L_x,L_t}
    \left(\mathcal{L}_{2\tau_L,h_L}\right)^{-1}
    \mathcal{R}_{\mathrm{s}}^{L_x,L_t}\mathcal{L}_{\tau_L,h_L}
  \right]\overline{\mathcal{S}}_{\tau_L,h_L}^{\nu_1},\\
  \overline{\mathcal{M}}_{\tau_L,h_L}^{\mathrm{f}} &:=
  \overline{\mathcal{S}}_{\tau_L,h_L}^{\nu_2} \left[I -
    \mathcal{P}_{\mathrm{f}}^{L_x,L_t}
    \left(\mathcal{L}_{2\tau_L,2h_L}\right)^{-1}
    \mathcal{R}_{\mathrm{f}}^{L_x,L_t}\mathcal{L}_{\tau_L,h_L}
  \right]\overline{\mathcal{S}}_{\tau_L,h_L}^{\nu_1}.
  \end{align}
  Hence it remains to analyze the mapping property of the operator
  $\mathcal{M}_{\tau_L,h_L}$ on the space of harmonics
  $\mathcal{E}_{L_x,L_t}(\theta_x,\theta_t)$. By several computations we find
  under the assumptions of periodic boundary conditions
  (\ref{chap4_periodicBCST}) that
  \begin{align*}
    \mathcal{M}_{\tau_L,h_L} : \mathcal{E}_{L_x,L_t}(\theta_x,\theta_t)
    \rightarrow \mathcal{E}_{L_x,L_t}(\theta_x,\theta_t)
  \end{align*}
  with the mapping
  \begin{align}\label{chap4_mappingPropertyApprox}
    \begin{pmatrix} U_1\\U_2\\U_3\\U_4 \end{pmatrix} \mapsto
    \begin{pmatrix}
      \widetilde{M}_{\tau_L,h_L}(\theta_x) & 0 \\
      0 & \widetilde{M}_{\tau_L,h_L}(\theta_x)
    \end{pmatrix}
    \begin{pmatrix} U_1\\U_2\\U_3\\U_4 \end{pmatrix},
  \end{align}
  and the iteration matrix
  \begin{align*}
    \widetilde{M}_{\tau_L,h_L}(\theta_x) &:=
    \widetilde{\mathcal{S}}_{\tau_L,h_L}^{x,\nu_1^x}(\theta_x)
    \mathcal{K}_{\tau_L,h_L}^x(\theta_x)
    \widetilde{\mathcal{S}}_{\tau_L,h_L}^{x,\nu_2^x}(\theta_x) \in \IC^{2 N_t
      \times 2N_t},\\
    \mathcal{K}_{\tau_L,h_L}^x(\theta_x,\theta_t)&:= I_{2N_t} -
    \widetilde{\mathcal{P}}_x(\theta_x)\widetilde{A}_{\tau_L,2h_L}^{-1}(2\theta_x)
    \widetilde{\mathcal{R}}_x(\theta_x) \widetilde{A}_{\tau_L,h_L}(\theta_x)\in
    \IC^{2 N_t \times 2N_t},
    \end{align*}
    with the matrices
    \begin{align*}
    \widetilde{A}_{\tau_L,h_L}(\theta_x) &:=
    \begin{pmatrix} 
      \hat{A}_{\tau_L,h_L}(\theta_x) & 0\\
      0 & \hat{A}_{\tau_L,h_L}(\gamma(\theta_x))
    \end{pmatrix} \in \IC^{2 N_t \times 2N_t},\\
    \widetilde{A}_{\tau_L,2h_L}^{-1}(2\theta_x) &:= \left(
      \hat{A}_{\tau_L,2h_L}(2\theta_x) \right)^{-1} \in \IC^{N_t \times
    N_t}, \\
    \widetilde{\mathcal{S}}_{\tau_L,h_L}^{x,\nu^x}(\theta_x) &:=
    \begin{pmatrix}
      \left(\hat{\mathcal{S}}_{\tau_L,h_L}(\omega_x,\theta_x)
      \right)^{\nu^x} & 0 \\
      0 & \left(\hat{\mathcal{S}}_{\tau_L,h_L}(\omega_x,\gamma(\theta_x))
      \right)^{\nu^x}
    \end{pmatrix} \in \IC^{2 N_t \times 2N_t},\\
    \widetilde{\mathcal{R}}_x(\theta_x) &:=
    \begin{pmatrix}
      \hat{\mathcal{R}}_x(\theta_x) I_{N_t} &
      \hat{\mathcal{R}}_x(\gamma(\theta_x)) I_{N_t}
    \end{pmatrix}  \in \IC^{2 N_t \times N_t},\\
        \widetilde{\mathcal{P}}_x(\theta_x) &:=
    \begin{pmatrix}
      \hat{\mathcal{P}}_x(\theta_x) I_{N_t} \\
      \hat{\mathcal{P}}_x(\gamma(\theta_x)) I_{N_t}
    \end{pmatrix}  \in \IC^{N_t \times 2N_t},
  \end{align*}
  and the Fourier symbols
  \begin{align*}
    \hat{A}_{\tau_L,h_L}(\theta_x) &:= \frac{h_L}{3}\left(2 +
    \cos(\theta_x)\right) K_{\tau_L} +
  \frac{2}{h_L}\left(1-\cos(\theta_x)\right) M_{\tau_L} \in
    \IC^{N_t \times N_t},\\
    \hat{\mathcal{S}}_{\tau_L,h_L}(\omega_x,\theta_x) &:= I_{N_t} - \omega_x
    \left( \frac{2h_L}{3} K_{\tau_L} + \frac{2}{h_L} M_{\tau_L} \right)^{-1}
    \hat{A}_{\tau_L,h_L}(\theta_x) \in \IC^{N_t \times N_t}.
  \end{align*}
  Hence we can analyze the modified two-grid iteration matrices
  (\ref{chap4_twoGridIterationMatrixApprox}) by taking the additional
  approximation with the mapping (\ref{chap4_mappingPropertyApprox})
  into account. For the smoothing steps $\nu_1^x = \nu_2^x = 2$ and
  the damping parameter $\omega_x = \frac{2}{3}$ for the spatial
  multigrid component, the theoretical
  convergence factors with semi coarsening in time are
  plotted in Figures \ref{chap4PlotMGSemiP0}--\ref{chap4PlotMGSemiP1}
  for the discretization parameter $\mu \in [10^{-6}, 10^6]$ with
  respect to the polynomial degrees $p_t \in \{0,1\}$. We observe that
  the theoretical convergence factors are always bounded by
  $\varrho(\overline{\mathcal{M}}_{\mu}^\mathrm{s}) \leq
  \frac{1}{2}$. We also notice that the theoretical convergence
  factors are a little bit larger for small discretization parameters
  $\mu$, compared to the case when the exact inverse of the diagonal
  matrix $D_{\tau_L,h_L}$ is used. The numerical factors are plotted
  as dots, triangles and squares in Figures
  \ref{chap4PlotMGSemiP0}--\ref{chap4PlotMGSemiP1}. We observe that
  the theoretical convergence factors coincide with the numerical
  results.

  In Figures \ref{chap4PlotMGFullP0}--\ref{chap4PlotMGFullP1} the
  convergence of the two-grid cycle for the full space-time coarsening
  case is studied. Here we see that the computed convergence factors
  are very close to the results which we obtained for the case when
  the exact inverse of the diagonal matrix $D_{\tau_L,h_L}$ is used.
\end{remark}

  \begin{figure}
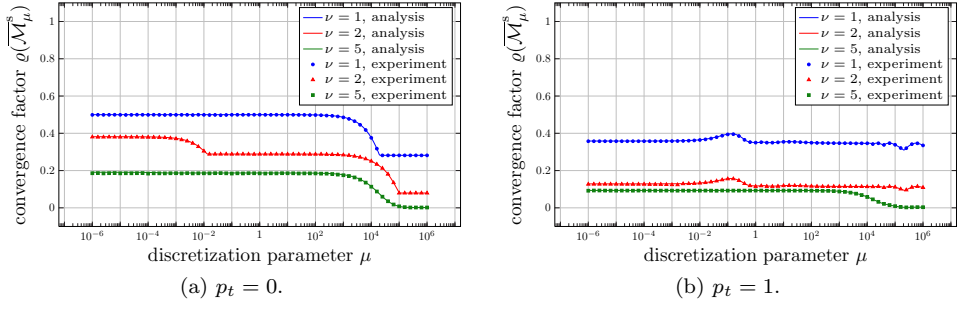

    \centering
    \subfloat[$p_t = 0$.]
    {
      \scalebox{0.43}{\input{./figures/plots/plotSTMGsemi0.tex}}
      \label{chap4PlotMGSemiP0}
    }\hfill
    \subfloat[$p_t = 1$.]
    {
      \scalebox{0.43}{\input{./figures/plots/plotSTMGsemi1.tex}}
      \label{chap4PlotMGSemiP1}
    }
    \caption{Average convergence factor
      $\varrho(\overline{\mathcal{M}}_{\mu}^\mathrm{s})$ for different
      discretization parameters $\mu$ and numerical convergence
      factors for $N_t = 256$ time steps and $N_x =
      1023$.}
  \end{figure}

  \begin{figure}
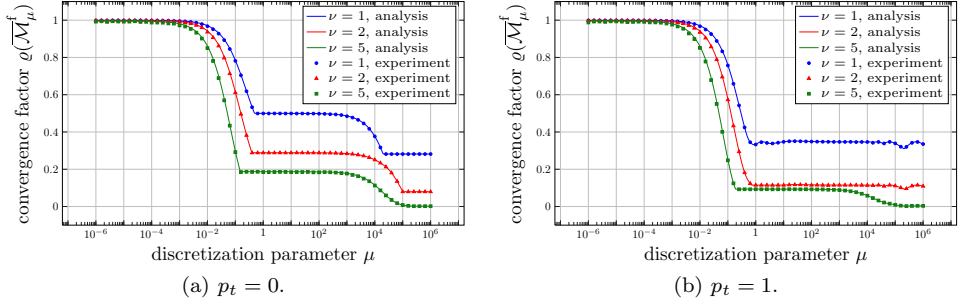

    \centering
    \subfloat[$p_t = 0$.]
    {
      \scalebox{0.43}{\input{./figures/plots/plotSTMGfull0.tex}}
      \label{chap4PlotMGFullP0}
    }\hfill
    \subfloat[$p_t = 1$.]
    {
      \scalebox{0.43}{\input{./figures/plots/plotSTMGfull1.tex}}
      \label{chap4PlotMGFullP1}
    }
    \caption{Average convergence factor
      $\varrho(\overline{\mathcal{M}}_{\mu}^\mathrm{f})$ for different
      discretization parameters $\mu$ and numerical convergence
      factors for $N_t = 256$ time steps and $N_x =
      1023$.}
  \end{figure}

\section{Numerical examples}\label{NumExSec}

We present now numerical results for the multigrid version of our
algorithm, for which we analyzed the two-grid cycle in Section
\ref{chap4_twoGridAnalysis}. Following our two-grid analysis, we also
apply full space-time coarsening only if $\mu_L \geq \mu^\ast$ in the
multigrid version. If $\mu_L < \mu^\ast$, we only apply semi
coarsening in time. In that case, we will have for the next coarser
level $\mu_{L-1} = 2 \tau_L h_L^{-2} = 2 \mu_L$.
This implies that the discretization parameter $\mu_{L-1}$ gets larger when semi
coarsening in time is used. Hence, if
$\mu_{L-k} \geq \mu^\ast$ for $k < L$ we can apply full space-time coarsening
to reduce the computational costs. If full space-time coarsening is applied, we
have $\mu_{L-1} = 2 \tau_L \left(2 h_L \right)^{-2} = \frac{1}{2} \mu_L$,
which results in a smaller discretization parameter $\mu_{L-1}$. We therefore
will combine semi coarsening in time or full space-time coarsening in the right
way to get to the next coarser space-time level. For
different discretization parameters $\mu = c \mu^\ast$, $c \in
\{1,10\}$, this coarsening strategy is shown in Figure
\ref{chap4figureRestriction} for 8 time and 4 space levels. The
restriction and prolongation operators for the space-time multigrid scheme are
then defined by the given coarsening strategy.
\begin{figure}
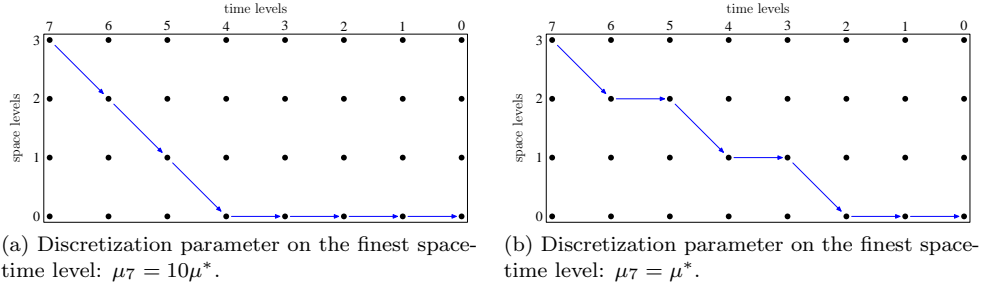

    \centering
    \subfloat[Discretization parameter on the finest space-time level: $\mu_7 =
    10 \mu^\ast$.]
    {
      \scalebox{0.44}{\includegraphics{./figures/spaceTimeRestriction.1}}
    }\hfill
    \subfloat[Discretization parameter on the finest space-time level: $\mu_7 =
    \mu^\ast$.]
    {
      \scalebox{0.44}{\includegraphics{./figures/spaceTimeRestriction.2}}
    }
    \caption{Space-time coarsening for different discretization parameters
      $\mu_L$.}\label{chap4figureRestriction}
\end{figure}

We now show examples to illustrate the performance of our new space-time
multigrid method.
\begin{example}[Multigrid iterations]\label{chap4_example1}
  In this example we consider the spatial domain $\Omega = (0,1)^3$
  and the simulation interval $(0,T)$ with $T = 1$. The initial
  decomposition for the spatial domain $\Omega$ is given by $12$
  tetrahedra. We use several uniform refinement levels to study
  the convergence behavior of the space-time multigrid solver with
  respect to the space-discretization. For the coarsest time level we
  use one time step, i.e. $\tau_0 = 1$. For the space discretization we use P1 conforming finite elements and for the time discretization we use piecewise linear
  discontinuous ansatz functions, i.e. $p_t = 1$. To test the performance
of the space-time multigrid method we use a zero right hand side, i.e.
$\vec{f} = \mathbf{0}$ and as an
initial guess $\vec{u}^0$ we use a random vector with values between
zero and one. For the space-time multigrid solver we use $\nu_1 =
\nu_2 = 2$, $\omega_t = \frac{1}{2}$, $\gamma = 1$.  We apply for each
block $A_{\tau_L,h_L}$ one geometric multigrid V-cycle to approximate
the inverse of the diagonal matrix $D_{\tau_L,h_L}$. For this
multigrid cycle we use $\nu_1^x = \nu_2^x = 2$, $\omega_x =
\frac{2}{3}$, $\gamma_x = 1$.  For the smoother we use a damped block
Jacobi smoother. We apply the space-time multigrid solver until we
have reached a given relative error reduction of
$\varepsilon_{\mathrm{MG}} = 10^{-8}$. In Table
\ref{chap4_TableExample1}, the iteration numbers for several space and
time levels are given. We observe that the iteration numbers stay
bounded independently of the mesh size $h_{L_x}$, the time step size
$\tau_{L_t}$ and the number of time steps $N_{L_t} = 2^{L_t}$.
\begin{table}
  \begin{center}
      \begin{tabular}{cc|ccccccccccccccc} \thickhline
        & & & & \multicolumn{11}{c}{time levels} \\
        & & $0$ & $1$ & $2$ & $3$ & $4$ & $5$ & $6$ & $7$ & $8$ & $9$ & $10$ &
        $11$ & $12$ & $13$ & $14$ \\ \hline
        \multirow{6}{1em}{\begin{sideways} space levels\end{sideways}} & $0$ &
        $1$ & $7$ & $7$ & $7$ & $7$ & $7$ & $7$ & $7$ & $8$ & $8$ & $9$ & $9$ & $9$ & $9$ & $9$ \\
        & $1$ & $1$ & $7$ & $7$ & $7$ & $7$ & $7$ & $7$ & $7$ & $8$ & $8$ & $9$ & $9$ & $9$ & $9$ & $9$\\
        & $2$ & $1$ & $7$ & $7$ & $7$ & $7$ & $7$ & $8$ & $7$ & $8$ & $8$ & $9$ & $9$ & $9$ & $9$ & $9$\\
        & $3$ & $1$ & $7$ & $7$ & $7$ & $7$ & $8$ & $8$ & $8$ & $8$ & $8$ & $9$ & $9$ & $9$ & $9$ & $9$\\
        & $4$ & $1$ & $7$ & $7$ & $7$ & $8$ & $8$ & $8$ & $8$ & $8$ & $8$ & $8$ & $9$ & $9$ & $9$ & $9$\\
        & $5$ & $1$ & $7$ & $7$ & $7$ & $7$ & $7$ & $8$ & $8$ & $8$ & $8$ & $8$
        & $9$ & $9$ & $9$ & $9$ \\ \thickhline
      \end{tabular}
  \end{center}
  \caption{ Multigrid iterations for Example
    \ref{chap4_example1}.}\label{chap4_TableExample1}
\end{table}
\end{example}

\begin{example}[High order time discretizations]\label{chap4_example2}
  In this example we study the convergence of the space-time
  multigrid method for different polynomial degrees $p_t$,
  which are used for the underlying time discretization. To do so, we
  consider the spatial domain $\Omega = (0,1)^2$ and the
  simulation interval $(0,T)$ with $T = 1024$. For the space-time
  discretization we use tensor product space-time elements with
  piecewise linear continuous ansatz functions in space, and for the
  discretization in time we use a fixed time step size $\tau =
  1$. For the initial triangulation of the spatial domain $\Omega$ we
  consider $4$ triangles, which are refined uniformly several
  times. For the space-time multigrid approach we use the same
  parameters as in Example \ref{chap4_example1}. We solve the linear
  system (\ref{chap4_linearSystem}) with zero right hand side,
  i.e. $\vec{f} = \mathbf{0}$ and for the initial vector $\vec{u}^0$
  we use a random vector with values between zero and one. We
  apply the space-time multigrid solver until we have reached a
  relative error reduction of $\varepsilon_{\mathrm{MG}} =
  10^{-8}$. In Table \ref{chap4_TableExample2} the iteration numbers
  for different polynomial degrees $p_t$ and different space levels
  are given. We observe that the iteration numbers are bounded,
  independently of the ansatz functions for the time
  discretization.
  \begin{table}
    \centering
      \begin{tabular}{cc|cccccccccccccc}
        \thickhline
        & & \multicolumn{14}{c}{polynomial degree $p_t$} \\
        & & $0$ & $1$ & $2$ & $3$ & $4$ & $5$ & $10$ & $15$ & $20$ & $25$ &
        $30$ & $35$ & $40$ & $45$ \\ \hline
        \multirow{8}{1em}{\begin{sideways} space levels\end{sideways}} & $0$ &
        $7$ & $7$ & $6$ & $6$ & $6$ & $6$ & $5$ & $5$ & $4$ & $4$ & $4$ & $4$ &
        $5$ & $5$ \\
        & $1$ & $7$ & $7$ & $7$ & $7$ & $7$ & $7$ & $7$ & $7$ & $7$ & $7$ & $7$ & $7$ & $7$ & $7$ \\
        & $2$ & $7$ & $7$ & $7$ & $7$ & $7$ & $7$ & $7$ & $7$ & $7$ & $7$ & $7$ & $7$ & $7$ & $7$ \\
        & $3$ & $7$ & $7$ & $7$ & $7$ & $7$ & $7$ & $7$ & $7$ & $7$ & $7$ & $7$ & $7$ & $7$ & $7$ \\
        & $4$ & $7$ & $7$ & $7$ & $7$ & $7$ & $7$ & $7$ & $7$ & $7$ & $7$ & $7$ & $7$ & $7$ & $7$ \\
        & $5$ & $7$ & $7$ & $7$ & $7$ & $7$ & $7$ & $7$ & $7$ & $7$ & $7$ & $7$ & $7$ & $7$ & $7$ \\
        & $6$ & $7$ & $7$ & $7$ & $7$ & $7$ & $7$ & $7$ & $7$ & $7$ & $7$ & $7$ & $7$ & $7$ & $7$ \\
        & $7$ & $7$ & $7$ & $7$ & $7$ & $7$ & $7$ & $7$ & $7$ & $7$ & $7$ & $7$
        & $7$ & $7$ & $7$ \\ \thickhline
      \end{tabular}
    \caption{Multigrid iterations with respect to the polynomial degree $p_t$.}\label{chap4_TableExample2}
  \end{table}

\end{example}

\section{Parallelization}\label{ParallelSec}

One big advantage of our new space-time multigrid method is that it
can be parallelized also in the time direction, i.e.  the damped block
Jacobi smoother can be executed in parallel in time. For each
time step we have to apply one multigrid cycle in space to
approximate the inverse of the diagonal matrix $D_{\tau_L, h_L}$. The
application of this space multigrid cycle can also be done in
parallel, where one may use parallel packages like in
\cite{Falgout2002, Falgout2006, Heppner2013}. Hence the problem
(\ref{chap4_linearSystem}) can be fully parallelized in 
space and time, see Figure \ref{chap4figureParallelization}.
\begin{figure}
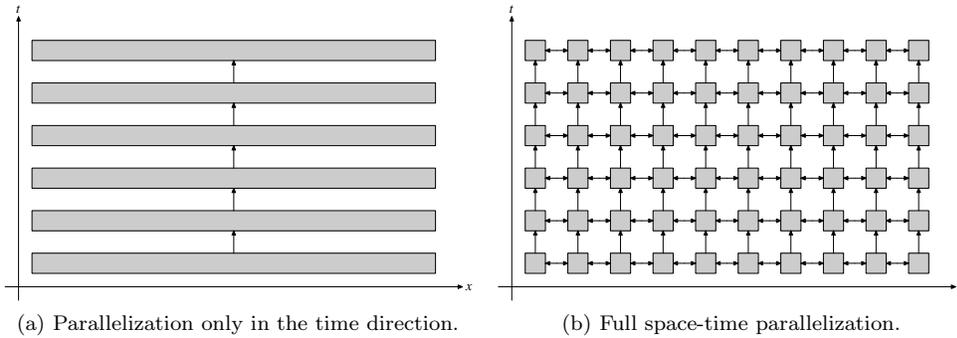

    \centering
    \subfloat[Parallelization only in the time direction.]
    {
      \scalebox{0.44}{\includegraphics{./figures/Space_Time_Parallelisazion.1}}
    }\hfill
    \subfloat[Full space-time parallelization.]
    {
      \scalebox{0.44}{\includegraphics{./figures/Space_Time_Parallelisazion.2}}
    }
    \caption{Communication pattern on a fixed
      level.}\label{chap4figureParallelization}
\end{figure}

The next example shows the excellent weak and strong scaling
properties of our new space-time multigrid method.
\begin{example}[Parallel computations]
In this example we consider the spatial domain $\Omega = (0,1)^3$,
which is decomposed into $49\;152$ tetrahedra. For the discretization
in space we use P1 conforming finite elements and for
the time discretization we use polynomials of order $p_t = 3$ and a fixed
time step size $\tau = 10^{-1}$. For the multigrid solver in space we use the best possible settings such that we obtain the smallest computational times when we apply the usual forward substitution. In particular we use a damped Gau\ss-Seidel smoother with the damping parameter $\omega_x = 1.285$ and we apply one pre- and one post-smoothing step, i.e. $\nu_1^x = \nu_2^x = 1$. We also tune the multigrid parameters with respect to time, such that we also obtain the best possible computational times for the presented space-time multigrid solver. Here we use $\nu_1 = \nu_2 = 1$ smoothing steps and since $\mu = \tau h^{-2}$ is large enough we use for the damping parameter $\omega_t = 1$, see also Remark \ref{remarkDampingParameter}. Of course we could also use the assymptotic optimal damping parameter $\omega_t^\ast = \frac{1}{2}$ which would lead to slightly more multigrid iterations for this case.

To show the parallel performance, we first study the weak scaling
behavior of the new multigrid method. We use a fixed number of time
steps per core, i.e. $2$ time steps for each core, and we increase the
number of cores when we increase the number of time steps. Hence the
computational cost for one space-time multigrid cycle stays almost the
same for each core. Only the cost for the communication grows, since
the space-time hierarchy gets bigger, when we increase the number of
time steps. In Table \ref{chap4_TableWeak}, we give timings for
solving the linear system (\ref{chap4_linearSystem}) for a different
number of time steps.  We see that the multigrid iterations stay
bounded, if we increase the problem size and that the computational
costs stay completely constant if we increase the number of cores. We
also compare the presented space-time multigrid solver with the usual
forward substitution. For this we apply for each time step the space
multigrid solver with the best possible settings from above. We run
the space multigrid solver until we obtain the same relative error
tolerance as for the space-time multigrid method. In Table
\ref{chap4_TableWeak} the timings for the forward substitution are
compared with the parallel space-time multigrid solver. Here we
observe that the space-time multigrid solver is already faster when we
use only two cores. Furthermore, when we increase the number of cores
we observe that the space-time multigrid approach completely
outperforms the forward substitution.

To test the strong scaling behavior, we fix the problem size and use
$32\;768$ time steps, which results in a linear system with
$979\;238\;912$ unknowns. Then we increase the number of cores, which
results in smaller and smaller problems per computing core. In Table
\ref{chap4_TableStrong} the timings are given for different
numbers of cores. We see that the computational costs are divided
by a factor very close to two, if we double the number of cores.
All the parallel computations of this example were performed on
  the Lemanicus BlueGene/Q Supercomputer in Lausanne, Switzerland,
  and for one, two and four cores, the computational times needed
  were too large to run, since Lemanicus has a maximum wall
    clock time restriction of 24 hours.
\begin{figure}
    \centering
    \subfloat[Weak scaling results.]
    {
      \scalebox{0.65}{
      \begin{tabular}{r|r|r|c|r|r}
        \thickhline
        cores & time steps & \multicolumn{1}{c|}{dof} & iter & time & fwd. sub. \\ \hline
        $1$ & $2$ & $59\;768$ & $7$ & $29.3$ & $21.2$\\
        $2$ & $4$ & $119\;536$ & $7$ & $29.6$ & $44.0$\\
        $4$ & $8$ & $239\;072$ & $7$ & $29.6$ & $87.7$\\
        $8$ & $16$ & $478\;144$ & $7$ & $29.7$ & $176.7$ \\
        $16$ & $32$ & $956\;288$ & $7$ & $29.6$ & $351.7$ \\
        $32$ & $64$ & $1\;912\;576$ & $7$ & $29.7$ & $703.8$\\
        $64$ & $128$ & $3\;825\;152$ & $7$ & $29.7$ & $1\;408.3$\\
        $128$ & $256$ & $7\;650\;304$ & $7$ & $29.7$ & $2\;819.8$ \\
        $256$ & $512$ & $15\;300\;608$ & $7$ & $29.8$ & $5\;662.7$\\
        $512$ & $1\;024$ & $30\;601\;216$ & $7$ & $29.7$ & $11\;278.4$ \\
        $1\;024$ & $2\;048$ & $61\;202\;432$ & $7$ & $29.8$ & $22\;560.3$ \\
        $2\;048$ & $4\;096$ & $122\;404\;864$ & $7$ & $29.7$ & $45\;111.3$ \\
        $4\;096$ & $8\;192$ & $244\;809\;728$ & $7$ & $29.7$ & $87\;239.8$ \\
        $8\;192$ & $16\;384$ & $489\;619\;456$ & $7$ & $29.8$ & $174\;283.5$ \\
        $16\;384$ & $32\;768$ & $979\;238\;912$ & $7$ & $29.7$ & $348\;324.0$\\ \thickhline
      \end{tabular}
      }\label{chap4_TableWeak}
    }\hfill
    \subfloat[Strong scaling results.]
    {
      \scalebox{0.65}{\begin{tabular}{r|r|r|c|r}
        \thickhline
        cores & time steps & \multicolumn{1}{c|}{dof} & iter & time \\ \hline
        $1$ & $32\;768$ & $979\;238\;912$ & $7$ & $-$ \\
        $2$ & $32\;768$ & $979\;238\;912$ & $7$ & $-$ \\
        $4$ & $32\;768$ & $979\;238\;912$ & $7$ & $-$ \\
        $8$ & $32\;768$ & $979\;238\;912$ & $7$ & $60\;232.5$ \\
        $16$ & $32\;768$ & $979\;238\;912$ & $7$ & $30\;239.5$ \\
        $32$ & $32\;768$ & $979\;238\;912$ & $7$ & $15\;136.2$ \\
        $64$ & $32\;768$ & $979\;238\;912$ & $7$ & $7\;590.0$ \\
        $128$ & $32\;768$ & $979\;238\;912$ & $7$ & $3\;805.2$ \\
        $256$ & $32\;768$ & $979\;238\;912$ & $7$ & $1\;906.8$ \\
        $512$ & $32\;768$ & $979\;238\;912$ & $7$ & $951.8$ \\
        $1\;024$ & $32\;768$ & $979\;238\;912$ & $7$ & $473.4$ \\
        $2\;048$ & $32\;768$ & $979\;238\;912$ & $7$ & $238.2$ \\
        $4\;096$ & $32\;768$ & $979\;238\;912$ & $7$ & $119.0$ \\
        $8\;192$ & $32\;768$ & $979\;238\;912$ & $7$ & $59.4$ \\
        $16\;384$ & $32\;768$ & $979\;238\;912$ & $7$ & $29.7$ \\ \thickhline
      \end{tabular}
      }\label{chap4_TableStrong}
    }
    \caption{Scaling results with solving times in seconds.}
  \end{figure}

  \end{example}

\section{Conclusions}\label{ConclusionSec}

We presented a new space-time multigrid method for the heat equation,
and used local Fourier mode analysis to give precise asymptotic
convergence and parameter estimates for the two-grid cycle. We showed
that this asymptotic analysis predicts very well the performance of
the new algorithm, and our parallel implementation gave excellent
weak and strong scaling results for a large number of processors.

This new space-time multigrid algorithm can not only be used for the
heat equation, it is applicable to general parabolic problems. It has
successfully been applied to the time dependent Stokes equations, where
one obtains similar speed up results as for the heat equation. Furthermore,
this technique has been applied successfully to parabolic control problems,
but the analysis and the results will appear elsewhere. 

\section*{Acknowledgments}

The financial support for CADMOS and the Blue Gene/Q system is
provided by the Canton of Geneva, Canton of Vaud, Hans Wilsdorf
Foundation, Louis-Jeantet Foundation, University of Geneva, University
of Lausanne, and Ecole Polytechnique F\'ed\'erale de Lausanne.


\bibliographystyle{abbrv}
\bibliography{bibliography}

\end{document}